\newtheorem{thm}{Theorem}[section]
\newtheorem{cor}[thm]{Corollary}
\newtheorem{lem}[thm]{Lemma}
\newtheorem{prop}[thm]{Proposition}
\newtheorem{defn}[thm]{Definition}
\newtheorem{remark}[thm]{Remark}
\newtheorem{example}[thm]{Example}
\tikzset{node style ge/.style={circle, font=\small}}
\def\rk{\operatorname {rank}}
\def\dim{\operatorname{dim}}
\def\ker{\operatorname{ker}}
\def\im{\operatorname{im}}
\def\id{\operatorname{id}}
\def\Spec{\operatorname{Spec}}
\def\Tor{\operatorname{Tor}}
\DeclareFontFamily{OMX}{MnSymbolE}{}
\DeclareSymbolFont{largesymbolsss}{OMX}{MnSymbolE}{m}{n}
\DeclareFontShape{OMX}{MnSymbolE}{m}{n}{
    <-6>  MnSymbolE5
   <6-7>  MnSymbolE6
   <7-8>  MnSymbolE7
   <8-9>  MnSymbolE8
   <9-10> MnSymbolE9
  <10-12> MnSymbolE10
  <12->   MnSymbolE12}{}
\DeclareFontShape{OMX}{MnSymbolE}{b}{n}{
    <-6>  MnSymbolE-Bold5
   <6-7>  MnSymbolE-Bold6
   <7-8>  MnSymbolE-Bold7
   <8-9>  MnSymbolE-Bold8
   <9-10> MnSymbolE-Bold9
  <10-12> MnSymbolE-Bold10
  <12->   MnSymbolE-Bold12}{}
\DeclareMathSymbol{\lsem}{\mathopen}{largesymbolsss}{'102}
\DeclareMathSymbol{\rsem}{\mathclose}{largesymbolsss}{'107}
\def\formal#1{\lsem #1 \rsem}
\def\ideal{\lhd}
    \def\pp#1{\left( #1 \right)}
    \def\ann#1{\operatorname{ann}\pp{#1}}
    \def\annn#1#2{\operatorname{ann}_{#1}\pp{#2}}
    \def\tensor{\otimes}
    \def\into{\hookrightarrow}
    \def\Zzero{\mathbb{Z}_{\geq 0}}
    \def\gr{\operatorname{gr}}
    \def\Hilb{\operatorname{Hilb}}
    \def\Homop{\operatorname{Hom}}
    \def\Homt#1#2#3{\Homop_{#1}\left( #2, #3 \right)}
    \def\Hom#1#2{\Homop\left( #1, #2 \right)}
    \def\Homsht#1#2#3{\mc{H}om_{#1}\left( #2, #3 \right)}
    \def\QQ{\mathbb{Q}}
    \def\Gl#1{\Gamma\left( #1 \right)}
    \def\hook{\lrcorner}
    \def\rk{\operatorname{rk}}
    \def\dimk{\rk_k}
\begin{document}

    \def\DD#1#2{\global\csdef{D#1}{#2}#2}
    \def\mc#1{\mathcal{#1}}
    \def\OO#1{\mc{O}_{#1}}
    \def\TT#1{T_{#1}}

%    \vfill
%    \begin{center}
%    \Large\bfseries W~ramach inspiracji:
%    \includegraphics[width=16cm]{phd091411s}
%    \vfill
%    \includegraphics[width=12cm]{phd101212s}

%\end{center}

%    \vfill
%    \newpage

    \title{Deformations of zero-dimensional
    schemes and applications.\\ MSc thesis, adviser: Jaros\l{}aw Buczy\'nski}
    \author{Joachim Jelisiejew\thanks{Supported by the project
    ``Secant varieties, computational complexity, and toric degenerations''
    realised within the Homing Plus programme of Foundation
    for Polish Science,
    co-financed from European Union, Regional
    Development Fund.}}
    \maketitle

    \begin{abstract}
        In this thesis we consider the geometry
        of the Hilbert scheme of points in $\mathbb{P}^n$, concentrating on the
        locus of points corresponding to the Gorenstein subschemes of
        $\mathbb{P}^n$. New results
        are given, most importantly we provide tools for constructing flat
        families and analysis of finite Gorenstein algebras and expose their
        efficiency by proving smoothability of
        certain families of algebras. Much of the existing
        theory and folklore is reviewed, providing a micro-encyclopaedic reference.
    \end{abstract}

    {\footnotesize\noindent
    \textbf{keywords:} flatness, smoothability, finite Gorenstein
    schemes, Hilbert function decomposition, Hilbert scheme.\\
    \textbf{AMS Mathematical Subject Classification 2010:} 13D10, 13H10,
    14C05, 14D06.\\}

    \section{Introduction}

    The Hilbert scheme of $r$-points of $\mathbb{P}^n$ parametrises closed
    zero-dimensional subschemes of $\mathbb{P}^n$ of degree $r$, or more
    precisely flat families of closed subschemes of degree $r$. It is one of the most
    important moduli spaces. Constructed by Grothendieck
    \cite{Gro} in 1961 it still draws much of attention because many
    natural questions about its structure are still open. It is known to be
    projective, by construction, and connected by a result of Hartshorne
    \cite{har66connectedness}. It is unknown exactly in which cases it is
    irreducible, though if $r$ and $n$ large, it is not, see \cite{CEVV}.
    An intriguing phenomena is that all proofs
    of reducibility are somehow indirect, resting on a dimension count on
    a tangent space or more sophisticated invariants. Much
    work is done to develop direct criteria, see~e.g.~\cite{ermanvelasco}.
    Similarly it
    is unknown whether the Hilbert scheme is reduced, though for large $r, n$ it is believed
    that it has arbitrarily bad singularities such as non-reduced irreducible
    components. 
    
    The problems with describing the exact structure of the Hilbert scheme of
    points had drawn attention to its important open subschemes such as the open
    \emph{Gorenstein locus}
    parametrising \emph{Gorenstein} subschemes of $\mathbb{P}^n$, see
    \cite{CN09}. Also this subset is known, for large $r, n$, to be
    reducible, see \cite{ia94}. In this paper we review much of the theory of
    zero-dimensional Gorenstein schemes and provide new tools for studying the
    Gorenstein locus in the aim to prove irreducibility of the locus for small
    $r$ and arbitrary $n$.

    This research is tightly connected with the study of secant and cactus
    varieties of a Veronese embedding $\nu:\mathbb{P}^n\to \mathbb{P}^N$: the
    zero-dimensional Gorenstein (resp.~smoothable Gorenstein) subschemes of
    $\mathbb{P}^n$
    of degree $r$ are used to parametrise $r$-th cactus (resp. $r$-th secant) variety of
    $\DD{verPn}{\nu\pp{\mathbb{P}^n}}$. See \cite{bubu2010}, especially Theorem 1.6 and Subsection 8.1, for details. 

    \subsection{Notation and text structure}

        All considered rings and algebras are commutative and with unity
        which is preserved by homomorphisms.
        Throughout the paper $k$ denotes a field and for a $k$-vector space
        $V$ the symbol $V^*$ denotes $\Homt{k}{V}{k}$; we will not use the star
        to refer to invertible elements of a ring. The expression $I\ideal A$
        means ``$I$ is an ideal of a ring $A$''. Where no confusion is likely
        to occur we use $(f_1,\dots,f_n)$ to denote the
        ideal generated by rings elements $f_1,\dots,f_n$. A \emph{finite} module
        (or vector space) is by definition a finitely generated module. We will use the
        word \emph{dimension} exclusively for Krull dimension. When speaking
        about linear dimension over $k$ we will use the words rank and corank instead
        of dimension and codimension and the symbol $\dimk$ instead of
        $\dim_k$. For example $\Spec k[\varepsilon]/\varepsilon^2$ has
        dimension zero and rank two over $k$.

        The text is divided into three expository sections and a
        section presenting original research results. Sections
        \ref{sec:flatness} and~\ref{sec:hilbscheme} recall the notions of
        flatness and Hilbert scheme. The emphasis is on
        flatness of projective and affine morphisms, anticipating our interest
        in finite morphisms. Section~\ref{sec:Gorenstein} is almost entirely algebraic or rather
        linear-algebraic, being concerned with zero-dimensional Gorenstein
        rings and their exploration through Macaulay's inverse systems. The paper
        culminates with the research Section \ref{ref:originalresearch:sec} analysing the geometry of the
        Gorenstein locus of Hilbert points in $\mathbb{P}^n$.

        Almost all results outside Section~\ref{ref:originalresearch:sec} are
        well-known or easy consequence of well-known results, however in many
        cases we did not find a suitable reference, thus being obliged to provide a proof.
        We have also provided proof of Theorem~\ref{ref:bjorkflatness:thm},
        whose original proof is complicated in the general setting of
        \cite{bjoerk}, and proofs of results from Subsection
        \ref{ref:localhilbertfunction:subsec} which more or less follow
        \cite{ia94}. Moreover all the results, with the notable exception of
        Theorem~\ref{ref:bjorkflatness:thm} and its supporting theory, were previously used in the
        setting and are considered standard.
        On the other hand all results contained in
        Section~\ref{ref:originalresearch:sec} are, as far as we know,
        original. They are put merely as an illustration of the introduced
        theory. Some of the techniques used can be found
        in \cite{JJ1551}, and a more thorough treatment will be
        given in a joint paper with
        Gf.~Casnati and R.~Notari.

    \section{Flatness}\label{sec:flatness}

    In this section we recall some standard material on flatness, mainly
    following \cite[Section III.9]{HarAG}.

    \begin{defn}
        Let $A$ be a ring and $M$ be an $A$-module. We say that $M$ is
        a \emph{flat} $A$-module if the functor $(-)\tensor_A M$ is exact.
    \end{defn}

    This definition comes with a geometric counterpart, see Lemma
    \ref{ref:basicflat:lem} below for the connection:
    \begin{defn}
        Let $f:X\to Y$ be a morphism of schemes and let $\DD{F}{\mc{F}}$ be a
        quasi-coherent sheaf on $X$. We say that $\DF$ is flat over $Y$ at a point $x\in X$
        if the stalk $\DF_x$ is a flat $\OO{Y,y}$-module, where $y = f(x)$ and
        we consider $\OO{X, x}$ as an $\OO{Y, y}$-module via the natural map. We say
        that \emph{$\DF$ is flat over $Y$} if it is flat at every point of
        $X$. We say that the morphism $X\to Y$ is flat if and only if $\OO{X}$ is flat
        over $Y$.
    \end{defn}

    Below we recall some basic properties of flatness, for simplicity the first
    two are stated only for
    flat morphisms:
    \begin{lem}\label{ref:basicflat:lem}
        \begin{enumerate}
            \item Flatness is stable under base change: for flat $X\to Y$
                and any $Y'\to Y$ the morphism $X\times_Y Y' \to Y'$ is flat;
            \item composition of flat morphisms is flat;
            \item let $A$ be a ring and $M$ be an $A$-module. The sheaf  
                $\DF = {\tilde{M}}$  on the affine scheme $Y = \Spec A$ is
                flat over $Y$ (with respect to $\id:Y\to Y$) if and
                only if $M$ is a flat $A$-module.
        \end{enumerate}
    \end{lem}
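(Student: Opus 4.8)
The plan is to reduce each geometric assertion to a purely algebraic statement about flat modules, exploiting the fact that flatness of a quasi-coherent sheaf is detected on stalks and is therefore local on both source and target. The algebraic backbone consists of two observations, each proved by exhibiting the relevant tensor functor as a composite of exact functors. First, base change for modules: if $M$ is a flat $A$-module and $A\to A'$ is any ring map, then $M\tensor_A A'$ is a flat $A'$-module, because for an $A'$-module $N$ there is a natural isomorphism $N\tensor_{A'}(M\tensor_A A')\cong N\tensor_A M$, and the right-hand functor is the composite of the exact restriction of scalars with the exact functor $(-)\tensor_A M$. Second, transitivity: if $B$ is a flat $C$-algebra and $M$ is a flat $B$-module, then $M$ is a flat $C$-module, since $(-)\tensor_C M\cong \big((-)\tensor_C B\big)\tensor_B M$ is a composite of two exact functors.

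For part (3) I would start from the definition: $\DF=\tilde M$ is flat over $Y=\Spec A$ exactly when, for every prime $\mathfrak p\ideal A$, the stalk $\DF_{\mathfrak p}=M_{\mathfrak p}$ is flat over $\OO{Y,\mathfrak p}=A_{\mathfrak p}$. Thus the content is the equivalence: $M$ is $A$-flat if and only if $M_{\mathfrak p}$ is $A_{\mathfrak p}$-flat for all primes $\mathfrak p$. The forward implication is the special case $A'=A_{\mathfrak p}$ of the base-change observation above. For the converse I would use that a complex of $A$-modules is exact precisely when it is exact after localizing at every prime, together with the natural isomorphism $(N\tensor_A M)_{\mathfrak p}\cong N_{\mathfrak p}\tensor_{A_{\mathfrak p}} M_{\mathfrak p}$; applying $(-)\tensor_A M$ to a short exact sequence and localizing then shows exactness is preserved.

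Parts (1) and (2) now follow. Composition is handled directly on stalks: for $x\in X$ with $y=f(x)$ and $z=g(y)$, the rings $\OO{X,x}$, $\OO{Y,y}$, $\OO{Z,z}$ and the induced maps place us exactly in the setting of the transitivity observation, with $C=\OO{Z,z}$, $B=\OO{Y,y}$ and $M=\OO{X,x}$, giving flatness of $\OO{X,x}$ over $\OO{Z,z}$. For base change I would reduce to the affine situation by covering: flatness may be checked on affine opens of $Y'$ lying over affine opens of $Y$ over which $X$ is covered by affine opens. On such a chart $X=\Spec B$, $Y=\Spec A$, $Y'=\Spec A'$ with $B$ flat over $A$, the fibre product is $\Spec(B\tensor_A A')$, and the base-change observation yields flatness of $B\tensor_A A'$ over $A'$; part (3) then translates this back into flatness of the sheaf.

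The main obstacle is not any single deep fact but the bookkeeping that legitimizes the reduction to rings: one must verify that flatness is genuinely local on the scheme, so that it suffices to test it on an affine cover, and one must correctly identify the stalks of the structure sheaf of the fibre product (a localization of a tensor product) in order to know that the affine computation captures flatness at every point. The one genuinely content-bearing step is the converse direction of the local nature of flatness used in part (3); everything else is formal manipulation of exact functors.
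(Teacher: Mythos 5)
Your argument is correct, and since the paper offers no proof of its own here --- it simply cites \cite[Thm III.9.2]{HarAG} --- your reduction to the two algebraic facts (base change and transitivity of flatness for modules via composites of exact functors, plus the local criterion for part (3)) is precisely the standard argument underlying that citation. The only point deserving explicit care, which you correctly flag as bookkeeping, is that in parts (1) and (2) flatness of a morphism is tested at stalks $\OO{X,x}$ over $\OO{Y,f(x)}$, i.e.\ at a prime of $B$ lying over a prime of $A$ rather than at a localization of $A$ itself, so one needs the small extra step that a localization of a flat $A$-algebra at any of its primes is again flat over the corresponding localization of $A$ --- which follows from your transitivity observation.
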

    \begin{proof}
        See \cite[Theorem III.9.2]{HarAG}.
    \end{proof}

    \subsection{Criteria for flatness}

    We will now investigate the flatness of modules.
    There are numerous criteria for flatness, we recall some of
    them below.

    \begin{lem}\label{ref:fingen:lem}
        Let $M$ be an $A$-module. If every finitely generated $A$-submodule
        of $M$ is flat over $A$ then $M$ is flat over $A$.
    \end{lem}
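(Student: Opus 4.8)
The plan is to realise $M$ as a filtered colimit of its finitely generated submodules and to exploit the fact that tensoring commutes with such colimits. Write $\{M_i\}_{i\in I}$ for the family of all finitely generated $A$-submodules of $M$, partially ordered by inclusion. This family is directed, since for any two finitely generated submodules $M_i,M_j$ their sum $M_i+M_j$ is again a finitely generated submodule containing both; moreover every element of $M$ lies in some $M_i$ (e.g.\ in the submodule it generates), so $M=\varinjlim_i M_i$, the colimit being realised as the increasing union.

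Since the tensor product is a left adjoint it commutes with arbitrary colimits; in particular, for every $A$-module $N$ there is a natural isomorphism $N\tensor_A M\cong \varinjlim_i\left(N\tensor_A M_i\right)$. Tensoring is always right exact, so in order to prove that $M$ is flat it suffices to check that $(-)\tensor_A M$ preserves injections. Let $N'\into N$ be an injection of $A$-modules. For each $i$ the hypothesis that $M_i$ is flat gives that the induced map $N'\tensor_A M_i\to N\tensor_A M_i$ is injective, and these maps are compatible with the transition maps of the directed system.

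It then remains to pass to the colimit. Filtered colimits in the category of $A$-modules are exact, and in particular preserve monomorphisms; applying this to the compatible system of injections above and using the identification of the previous paragraph, we conclude that $N'\tensor_A M\to N\tensor_A M$ is injective. Hence $(-)\tensor_A M$ is exact and $M$ is flat.

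The main obstacle is not the module-theoretic bookkeeping but the two categorical inputs: the commutation of $\tensor$ with direct limits and the exactness of filtered colimits of modules. These are the points where care is genuinely needed, although both are standard. As an alternative route avoiding explicit colimits, one can argue via the equational criterion for flatness: a single relation $\sum_\ell a_\ell x_\ell=0$ in $M$ involves only finitely many $x_\ell\in M$ and hence lives in one finitely generated submodule $M_0$, where flatness of $M_0$ resolves it with witnesses $y_j\in M_0\subseteq M$; this resolution verifies the criterion for $M$. On this route the obstacle reduces to recalling and justifying the equational criterion itself.
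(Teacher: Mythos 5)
Your argument is correct and is essentially the paper's own proof: both realise $M$ as the filtered direct limit of its finitely generated submodules, commute the tensor product past the colimit, and invoke exactness of filtered colimits of $A$-modules to conclude. Your write-up is somewhat more detailed (you verify directedness and reduce to preservation of injections, and you sketch the equational-criterion alternative), but the route is the same.
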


    \begin{proof}
        \def\colim{\varinjlim}
        The module $M$ is a direct limit of its finitely generated
        submodules $M_n$. Take any short exact sequence $\mc{S}$ of
        $A$-modules, then
        $M\tensor_A \mc{S}  \simeq  \pp{\colim M_n} \tensor_A \mc{S}  \simeq
        \colim \pp{M_n\tensor_A
        \mc{S}}$ and this is a short exact sequence of $A$-modules because it is a filtered direct limit of short exact
        sequences of $A$-modules.
    \end{proof}

%    \begin{proof}
%        Take a short exact sequence of $A$-modules $0\to N'\to N\to
%        N''\to 0$ and apply $(-)\tensor_A M$ obtaining
%        \[
%        0\to N'\tensor M \to N\tensor M\to N''\tensor M\to.
%        \]
%        This sequence is exact except, possibly, $0\to N'\tensor M\to N\tensor
%        M$. Suppose that there is a non-zero $m$ in the kernel of $N'\tensor M\to
%        N\tensor M$. From the construction of the tensor product it follows
%        that there if a finitely generated $A$-module $M'\subseteq M$ such that $m$ is the kernel of
%        $N\tensor M'\to N\tensor M$. This proves.
%        \textbf{Smuta!}
%    \end{proof}

    \begin{lem}
        A finitely generated module over a noetherian local ring is flat if
        and only if it is
        free. 
    \end{lem}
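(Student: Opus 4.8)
The plan is to fix notation $\pp{A,\mathfrak m}$ for the noetherian local ring, with residue field $k=A/\mathfrak m$, and to treat the two implications separately. The implication ``free $\Rightarrow$ flat'' is immediate: $A$ is flat over itself, a finite free module is a finite direct sum of copies of $A$, and tensor product commutes with direct sums, so $\pp{-}\tensor_A A^n$ is exact. All the substance lies in the converse.

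So assume $M$ is finite and flat. First I would pass to a minimal generating set: choose $x_1,\dots,x_n\in M$ whose residues form a $k$-basis of $M/\mathfrak m M$. By Nakayama's lemma these generate $M$, giving a surjection $\phi\colon A^n\onto M$ with $e_i\mapsto x_i$. Setting $K=\ker\phi$ produces a short exact sequence
\[
0\to K\to A^n\xrightarrow{\phi} M\to 0.
\]
Since $A$ is noetherian and $A^n$ is finite, the submodule $K$ is again finite — this is the one place the noetherian hypothesis is used, and it is exactly what will license a final application of Nakayama.

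Next I would feed in flatness by tensoring this sequence with $k$. Because $A^n$ is free we have $\Tor_1^A\pp{A^n,k}=0$, and flatness of $M$ gives $\Tor_1^A\pp{M,k}=0$; hence the long exact sequence of $\Tor$ collapses to a short exact sequence
\[
0\to K\tensor_A k\to k^n\to M/\mathfrak m M\to 0.
\]
Here the right-hand arrow is the map induced by $\phi$, which by construction sends the standard basis of $k^n$ to a $k$-basis of $M/\mathfrak m M$ and is therefore an isomorphism. Consequently $K\tensor_A k=K/\mathfrak m K=0$, and since $K$ is finite, Nakayama's lemma forces $K=0$. Thus $\phi$ is an isomorphism and $M\cong A^n$ is free.

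The only genuinely nontrivial step is the vanishing $\Tor_1^A\pp{M,k}=0$: this is precisely where flatness enters, upgrading the always-available right exactness of $\pp{-}\tensor_A k$ to the left exactness needed to control $K/\mathfrak m K$. Everything else is Nakayama together with the noetherian finiteness of $K$. If one wishes to avoid $\Tor$ altogether, the same conclusion follows from the equational criterion for flatness, but passing through the long exact sequence is the cleanest route.
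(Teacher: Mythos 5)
Your proof is correct, and it is the standard Nakayama-plus-$\Tor$ argument; the paper itself gives no proof here but simply cites \cite[Thm 3.2.7]{WeibHA}, which proves the statement in essentially the same way. The one step that carries all the weight --- using flatness to get $\Tor_1^A(M,k)=0$ and hence injectivity of $K\tensor_A k\to k^n$ --- is handled correctly, and the noetherian hypothesis is invoked exactly where it is needed, namely to make $K$ finitely generated so that Nakayama applies.
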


    \begin{proof}
        See \cite[Thm 3.2.7]{WeibHA}.
    \end{proof}

    \begin{lem}\label{ref:pidflatness:lem}
        Flat modules are torsion free.
        A module over a principal ideal domain (PID) is flat if and only if
        it is torsion free. In particular a submodule of a flat module over a PID is a
        flat module.
    \end{lem}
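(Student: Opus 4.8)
The plan is to prove the three assertions in the stated order, since each builds on the previous one.

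First I would show that every flat module is torsion free. Suppose $M$ is flat over $A$ and let $a \in A$ be a non-zero-divisor. Multiplication by $a$ gives an injection $A \xrightarrow{\cdot a} A$. Since tensoring with a flat module preserves injections, the map $M \xrightarrow{\cdot a} M$ is again injective, which says exactly that no nonzero element of $M$ is killed by $a$. Hence $M$ is torsion free. This direction works over any ring and does not use the PID hypothesis.

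Next, over a PID $A$, I would prove the converse: a torsion free module is flat. By Lemma~\ref{ref:fingen:lem} it suffices to check that every finitely generated submodule $N \subseteq M$ is flat. Each such $N$ is a finitely generated torsion free module over a PID, hence free by the structure theorem for finitely generated modules over a PID. A free module is a direct sum of copies of $A$, and $A$ is flat over itself, so $N$ is flat. The reduction to finitely generated submodules via Lemma~\ref{ref:fingen:lem} is the key move, because the structure theorem applies only in the finitely generated case; an arbitrary torsion free module over a PID need not be free.

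Finally, the ``in particular'' statement follows immediately: if $M$ is flat over a PID and $N \subseteq M$ is a submodule, then $N$ is torsion free (a submodule of a torsion free module is torsion free), and by the equivalence just established $N$ is therefore flat. The main obstacle, if any, is remembering to invoke Lemma~\ref{ref:fingen:lem} so as to stay within the finitely generated setting where the structure theorem is available; beyond that the argument is routine.
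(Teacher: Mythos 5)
Your proposal is correct and follows essentially the route the paper intends: the paper's proof simply cites \cite[Cor 6.3]{EisView} together with Lemma~\ref{ref:fingen:lem}, which is exactly the combination you spell out (flatness preserves the injection given by multiplication by a non-zero-divisor; reduce to finitely generated submodules via Lemma~\ref{ref:fingen:lem} and apply the structure theorem for finitely generated torsion-free modules over a PID). Your explicit remark that Lemma~\ref{ref:fingen:lem} is what removes the finite-generation hypothesis matches the paper's own note to that effect.
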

    \begin{proof}
        See \cite[Cor 6.3]{EisView} and
        Lemma~\ref{ref:fingen:lem}. Note that we do not assume
        that the module is finitely generated.
    \end{proof}

    We will be interested in connections of flatness and the behaviour of
    fibers of morphism. Here it is important to note, that we should make some
    additional assumptions on the morphism:
    \begin{example}\label{ref:hyperbola:exa}
        The hyperbola \[\Spec \frac{k[x, t]}{(tx - 1)} \to  \Spec k[t]\] is
        flat over $\Spec k[t]$, as $k[x, t]/(tx - 1)  \simeq  k[x,
        x^{-1}]$ is torsion free over $k[x^{-1}]$, but on the other
        hand the fiber over $(t-\alpha)$ for an invertible $\alpha\in k$ is
        isomorphic to a point $\Spec k$, and over $(t)$ the fiber is
        empty.

        On the other hand families with well-behaved fiber
        lengths are not necessary flat. For example we can
        add the origin to the aforementioned hyperbola, obtaining
        \[\Spec \frac{k[x, t]}{(tx - 1)\cap (t, x)} \to
        \DD{line}{\Spec k[t]},\]
        which has fibers over all $(t-\alpha)$ for $\alpha\in k$
        isomorphic to $\Spec k$. But this morphism is not flat!
        In fact $t\cdot (tx - 1) \in (tx - 1) \cap (t, x)$ and $tx-1\not\in
        (t, x)$, thus $t$ is a zero-divisor on $k[x, t]/\pp{(tx - 1)\cap
        (t, x)}$, contradicting Lemma~\ref{ref:pidflatness:lem}. Geometrically
        this corresponds to the fact that the origin and the hyperbola are
        different connected components.
    \end{example}

    \subsection{Flatness for coherent sheaves on projective space}

    Example~\ref{ref:hyperbola:exa} shows that, as usually when thinking about fibers, one should assume
    that the morphism is proper. Since ultimately we are interested in finite
    morphisms, which are projective, we will now consider only the stronger
    notion of projective morphism.
    
    \begin{defn}
        Let $T$ be a noetherian scheme and $\DD{F}{\mc{F}}$ be a coherent
        sheaf on $\DD{PTn}{\mathbb{P}_T^n}$.
        Define, for any point $t\in T$, the \emph{Hilbert polynomial} $\DD{Pt}{P_t}$ of the
        sheaf $\DF$ by the property
        \[
        \DPt(m) = \rk_{k(t)} H^0\big( \left(\mathbb{P}_{T}^n\right)_t,
        \DF_t(m) \big)\mbox{
        for }m\mbox{ large enough,}
        \]
        where $\DF_t$ is the scheme-theoretic fiber of $\DF$ over the point $t\in T$.
        If $\DF = \OO{Z}$ is the structure sheaf of a
        projective scheme $Z$, then this agrees with the usual definition of the
        Hilbert polynomial of $Z$.
    \end{defn}

    By vanishing of higher cohomologies the definition is correct, see
    \cite[III.5 Ex 5.2]{HarAG}.

    \begin{thm}\label{ref:Harflat:thm}
        Let $T$ be an integral noetherian scheme and $\DD{F}{\mc{F}}$ be a
        coherent sheaf on $\DD{PTn}{\mathbb{P}_T^n}$.
        Then the following are equivalent:
        \begin{enumerate}
            \item the sheaf $\DF$ is flat over $T$;
            \item the Hilbert polynomial $\DPt$ is independent of the choice of
                $t\in T$.
        \end{enumerate}
    \end{thm}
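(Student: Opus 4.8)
The plan is to prove this classical result (Hartshorne III.9.9) by reducing to a statement about graded modules over the polynomial ring, where flatness can be detected by constancy of the Hilbert function in each degree. Let $S = \OO{T}[x_0, \dots, x_n]$ be the relevant graded sheaf of algebras, and let $M = \bigoplus_{m} H^0(\mathbb{P}_T^n, \DF(m))$ be the associated graded module, which for $m$ large recovers the global sections computing $\DPt$. Since the question is local on $T$ and both flatness and the Hilbert polynomial are preserved under passing to the stalk, I would first reduce to the affine case $T = \Spec A$ with $A$ a noetherian integral domain, so that $\DF$ corresponds to a finitely generated graded $S = A[x_0, \dots, x_n]$-module $M$, and I may replace $M$ by its truncation $M_{\geq m_0}$ for $m_0$ large without affecting the sheaf $\DF$ or the relevant cohomology.

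\emph{First} I would establish the direction (1) $\Rightarrow$ (2): if $\DF$ is flat over $T$, then flatness is stable under the base change $\Spec k(t) \to T$ by Lemma~\ref{ref:basicflat:lem}, so each graded piece $M_m \tensor_A k(t)$ computes $H^0((\mathbb{P}_T^n)_t, \DF_t(m))$ for $m$ large; the key input is cohomology-and-base-change, which for $m \gg 0$ forces the higher cohomology to vanish and makes $\rk_{k(t)} H^0$ equal to the rank of the free module $M_m$ (flat and finitely generated over noetherian $A$ means locally free), which is independent of $t$.

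\emph{For the converse} (2) $\Rightarrow$ (1), the strategy is to show $M_m$ is a free $A$-module for all $m \gg 0$, as this gives flatness of $\DF$ (a graded module that is free in all high degrees yields a flat sheaf on $\Proj$). Here I would invoke the hypothesis that $A$ is an integral domain together with the constancy of $\DPt(m) = \dimk (M_m \tensor_A k(t))$ over all $t$. The engine is the standard lemma that a finitely generated module over a noetherian integral domain whose fiber rank $\dimk(M_m \tensor_A k(t))$ is constant must be locally free (this is a generic-freeness plus upper-semicontinuity argument: the rank is upper semicontinuous and generically equals the constant value, and constancy rules out any jump).

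\emph{The main obstacle} I anticipate is handling the passage between the sheaf-theoretic fiber $\DF_t$ and the algebraic fiber $M \tensor_A k(t)$ cleanly, together with the uniform choice of $m_0$ that works simultaneously for all $t \in T$ to guarantee vanishing of higher cohomology on every fiber. Making ``$m$ large enough'' uniform over $T$ is exactly where one needs a quasi-compactness or Serre-vanishing argument, and conflating the two notions of fiber is the classic pitfall; I would treat this carefully by bounding the regularity uniformly on the noetherian base. Once freeness of $M_m$ for a uniform range $m \geq m_0$ is secured, flatness of $\DF$ follows formally, completing both implications.
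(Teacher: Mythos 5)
The paper does not prove this statement itself but simply cites \cite[Thm III.9.9]{HarAG}, and your sketch is a faithful reconstruction of exactly that argument: reduce to $T=\Spec A$ affine, identify flatness of $\DF$ with flatness (hence local freeness) of the finitely generated $A$-modules $H^0(\mathbb{P}^n_A,\DF(m))$ for $m\gg 0$, compare the sheaf-theoretic fiber with $M_m\tensor_A k(t)$ via cohomology and base change in high degrees, and use the criterion that over a noetherian integral domain constancy of the fiber rank of a finitely generated module is equivalent to local freeness. You also correctly place the integrality hypothesis where it is actually needed (the implication from constant Hilbert polynomial to flatness) and flag the genuine technical point, namely the uniform choice of $m_0$, so the proposal is correct and takes the same route as the paper's reference.
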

    \begin{proof}
        See \cite[Thm III.9.9]{HarAG}.
    \end{proof}

    We are mostly interested in the case when the fibers of $\DF$ are finite,
    so the Hilbert polynomial has degree zero.

    \begin{remark}\label{ref:degree:remark}
        If $\DF$ is flat and $\DPt$ is a polynomial of degree zero, then is it
        equal to $\rk_{\DD{kt}{k(t)}} \DD{GlobalF}{\Gl{\DF_t}}$. This
        number is called the \emph{rank of sheaf $\DF$ at $t\in T$}.
    \end{remark}

    The typical case is $\DF$ being the structure sheaf
    of a projective scheme $X \subseteq \DPTn$ such
    that $X\to T$ is flat and with finite fibers. If $T$ is
    of finite type over a field, then we have a useful
    corollary of Theorem~\ref{ref:Harflat:thm}:

    \begin{cor}\label{ref:lookatclosed:cor}
        Let $T$ be an integral scheme of finite type over a field and $\DF = \OO{Z}$ be the
        structure sheaf of a scheme $Z$ projective over $T$.
        Suppose that for every \emph{closed} $t\in T$ the rank of $\DF$ is finite and
        does not depend on the choice of $t$. Then $\DF$ is flat over $T$.
    \end{cor}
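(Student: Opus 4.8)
The plan is to reduce the statement to a question about a single coherent sheaf on $T$ and then to invoke the classical fact that on a reduced scheme a coherent sheaf of constant fibre rank is locally free. Throughout I read the hypothesis, in accordance with the surrounding discussion, as saying that over each closed $t$ the fibre $Z_t$ is a finite (zero-dimensional) scheme of rank $r$, the common value; note that finiteness of $\rk_{k(t)} H^0(Z_t, \OO{Z_t})$ alone is automatic for projective fibres, so it is the zero-dimensionality that carries content (and is indeed needed for the statement to be true).

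First I would show that $f : Z \to T$ is a finite morphism. Since $f$ is projective, hence proper, it is closed, and the locus $\{z \in Z : \dim_z Z_{f(z)} \ge 1\}$ is closed in $Z$; its image $W \subseteq T$ is therefore closed and equals $\{t : \dim Z_t \ge 1\}$. By hypothesis $W$ contains no closed point of $T$, and as $T$ is of finite type over a field it is Jacobson, so every nonempty closed subset meets the closed points; hence $W = \emptyset$ and all fibres of $f$ are zero-dimensional. A proper morphism with finite fibres is finite, so $f$ is finite and $\mathcal{G} := f_*\OO{Z}$ is coherent on $T$. Because $f$ is affine, $\DF = \OO{Z}$ is flat over $T$ if and only if $\mathcal{G}$ is a flat $\OO{T}$-module, which for a coherent sheaf over the locally noetherian $T$ is equivalent to $\mathcal{G}$ being locally free. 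Finally, base change along a finite morphism gives $\mathcal{G}\tensor_{\OO{T}} k(t) \simeq H^0(Z_t,\OO{Z_t})$, so the fibre rank $g(t) := \rk_{k(t)}\pp{\mathcal{G}\tensor k(t)}$ agrees with the rank of $\DF$ at $t$.

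It now suffices to prove $g \equiv r$ on all of $T$. By Nakayama $g(t)$ is the minimal number of generators of $\mathcal{G}_t$; in particular $g$ is upper semicontinuous and, since $\mathcal{G}_\eta$ is a localization of each $\mathcal{G}_t$ (where $\eta$ is the generic point), $g(\eta) \le g(t)$ for every $t$. For the upper bound, $\{t : g(t) \ge r+1\}$ is closed and contains no closed point, so by the Jacobson property it is empty and $g \le r$ everywhere. For the lower bound, the set $U := \{t : g(t) \le g(\eta)\}$ is open (its complement $\{g \ge g(\eta)+1\}$ is closed) and contains $\eta$; combined with $g \ge g(\eta)$ it gives $g \equiv g(\eta)$ on $U$. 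As $U$ is a nonempty open subset of a Jacobson scheme it contains a closed point, at which $g = r$, forcing $g(\eta) = r$ and hence $g(t) \ge g(\eta) = r$ for all $t$. Together the two bounds give $g \equiv r$.

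Since $T$ is integral, hence reduced, a coherent sheaf with constant fibre rank is locally free \cite[II, Ex.~5.8]{HarAG}; thus $\mathcal{G}$ is locally free of rank $r$, so $\mathcal{G}$ and therefore $\DF$ is flat over $T$. The main obstacle is precisely the middle paragraph: the hypothesis constrains only the closed points, and the real work is to propagate constancy of the fibre rank to all points. This is where the two hypotheses on $T$ are indispensable — the Jacobson property lets a nonempty closed set be detected by closed points, and irreducibility makes the generic point generize everything, pinning the minimal fibre rank also at $r$.
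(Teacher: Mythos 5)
Your proof is correct, and it takes a genuinely different route from the paper's. The paper stays on $\mathbb{P}^n_T$ and invokes Theorem~\ref{ref:Harflat:thm} (flatness over an integral noetherian base is equivalent to constancy of the Hilbert polynomial), so its only task is to propagate the constant value of $\rk_{k(t)}\Gl{\DF_t}$ from closed points to all points; it does this with the same two ingredients you use --- semicontinuity of rank and the fact that every nonempty locally closed subset of a finite-type scheme over a field contains a closed point --- phrased as ``the locus of minimal rank is open, hence contains a closed point, hence contains all closed points, hence is everything.'' You instead push forward: you first prove $Z\to T$ is finite (via upper semicontinuity of fibre dimension plus the Jacobson property), reduce to the coherent sheaf $\mathcal{G}=f_*\OO{Z}$ on $T$, show its fibre rank is identically $r$ by combining upper semicontinuity, the generic point, and Jacobson again, and conclude with ``constant fibre rank on a reduced scheme implies locally free'' \cite[II.5 Ex 5.8c]{HarAG}. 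Your version is more self-contained and actually fills two gaps the paper glosses over: (i) the paper's appeal to semicontinuity of rank tacitly requires a coherent sheaf on $T$, i.e.\ exactly the finiteness of $Z\to T$ that you establish explicitly (and that the paper only arranges separately in its application, Proposition~\ref{ref:flatmorphism:prop}); and (ii) Theorem~\ref{ref:Harflat:thm} needs the full Hilbert polynomial, not just $h^0$, to be constant, which is only the same thing once one knows all fibres (including non-closed ones) are zero-dimensional. Your opening remark that the hypothesis must be read as zero-dimensionality of the closed fibres --- since $\rk_{k(t)}H^0(Z_t,\OO{Z_t})$ is automatically finite for projective fibres --- is exactly the right reading and matches how the corollary is used later in the paper. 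What the paper's route buys is brevity by outsourcing the hard content to \cite[Thm III.9.9]{HarAG}; what yours buys is an elementary, complete argument whose only nontrivial inputs are Nakayama-type semicontinuity and the Jacobson property.
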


    \begin{proof}
        By Theorem~\ref{ref:Harflat:thm} and Remark~\ref{ref:degree:remark} it
        is enough to show that $\rk_{\Dkt} \DGlobalF$ is independent
        of the choice of point (not necessarily closed) $t\in T$. This is true
        for closed points, by assumption.

        Note that any open subset of $T$ contains a closed point.
        By semicontinuity of rank (see \cite[II.5 Ex 5.8a]{HarAG}), the set of $t\in T$ such that
        the rank is minimal is open, thus it
        contains a closed point. But now it follows that it contains all
        closed points, so in fact it is equal to $T$.
    \end{proof}

    \subsection{Flatness for filtered modules}

    Let $V$ be a closed subscheme of $\DD{Ank}{\mathbb{A}_k^n}$. In this
    section we will
    investigate flatness of $V$ under a projection $\DAnk \to
    \DD{Anone}{\mathbb{A}^1_k}$, making use of the gradation on $\DAnk$. Of
    course the gradation may not give a gradation on the algebra of global functions on
    $V$, but it always gives a filtration.

    \begin{defn}[Filtration]
        Let $\DD{A}{A}$ be a ring. A \emph{filtration} of $\DA$ is an infinite sequence
        of abelian subgroups of $A$:
        \[
        A_0 \subseteq A_1 \subseteq \dots \subseteq A_n \subseteq \dots
        \]
        such that
        \begin{enumerate}
            \item $1\in A_0$ and $A_n\cdot A_m \subseteq A_{n+m}$ for all $n,
                m\in \Zzero$;
            \item $\bigcup A_n = A$.
        \end{enumerate}
    \end{defn}

    \begin{defn}[Rees algebra of a filtration]
        If $A$ is a ring filtered by $A_n$, then the abelian subgroup
        \[
        \DD{AA}{\mc{A}} = \DAA(A_n) = \bigoplus_{n\in \Zzero} A_n t^n \subseteq A[t]
        \]
        of the polynomial ring $A[t]$ is actually a subring, called the
        \emph{Rees algebra of the filtration}, see \cite[Section
        6.5]{EisView}. We define the \emph{associated graded ring of $\DAA$} as
        \[
        \DD{gA}{\gr \DAA} = \DAA/(t).
        \]
        As an $A_0$-module $\DgA$ is isomorphic to $\bigoplus_{n\geq 0} A_n/A_{n-1}$, where
        $A_{-1} = 0$.
        Note that $A = \DAA/(t-1)$.
    \end{defn}

    \begin{example}\label{ref:gradedtofiltered:example}
        \begin{enumerate}
            \item Every ring $A$ can be filtered \emph{trivially} by letting $A_n
                = A$ for all $n\in \Zzero$, then the associated graded ring is
                just $A$.
            \item If $A = A^0\oplus A^1 \oplus \dots$ is a $\Zzero$-graded ring,
        then we obtain a filtration $A_n := \sum_{m\leq n} A^{m}$. In
        this situation we have a natural isomorphism of $A_0$-algebras $A
        \simeq \gr \mc{A}$ coming from $A_n/A_{n-1}  \simeq  (A^0 + \dots +
        A^n)\ /\ (A^0 + \dots +
        A^{n-1})  \simeq A^n$.
        \end{enumerate}
    \end{example}

    \begin{defn}[Filtration on a module]
        Let $\DA$ be a filtered ring and $\DD{M}{M}$ be an $\DA$-module.
        A~\emph{filtration} on $\DM$ is an infinite sequence
        of abelian subgroups of $\DM$:
        \[
        M_0 \subseteq M_1 \subseteq \dots \subseteq M_n \subseteq \dots
        \]
        such that $A_n\cdot M_m \subseteq M_{n+m}$ for all $n, m\in \Zzero$
        and $\bigcup_n M_n = M$.
        To a filtered module $M$ we can associate the Rees module
        \[
        \DD{MM}{\mc{M}} = \DMM(M_n) = \bigoplus_{n\in \Zzero} M_n t^n \subseteq M[t]
        \]
        which is naturally an $\DAA$-module, and the \emph{associated graded
        module} $\DD{gM}{\gr \DMM} = \DMM/(t)\cdot \DMM$ which is naturally a
        $\DgA$-module.
        Note that for any $N\in \Zzero$ the subgroup $\DMM_{\geq N} := \bigoplus_{n\geq N}
        M_n t^n \subseteq \DMM$ is an $\DAA$-submodule of $\DMM$.
    \end{defn}
        When speaking about the filtered rings and modules we will use the
        convention that $\mc{X}$ is the Rees algebra/module of an
        algebra/module $X$.
    One consequence of the above definitions is the following
    corollary.
    \begin{cor}\label{ref:gradedhassamelengthfinite:cor}
        Let $\DA$ be a filtered ring and $\DD{B}{B} \subseteq A_0$ be a subring.
        Suppose that $\DM$ is a filtered $\DA$-module. If
        $\DM$ is finitely generated over $\DA$ then $\DgM$ and $\DM$ have the
        same $\DB$-length (possibly infinite).
    \end{cor}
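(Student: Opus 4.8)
The plan is to show that both $\len_B M$ and $\len_B(\gr\mc{M})$ equal the (possibly infinite) sum $\sum_{n\ge 0}\len_B(M_n/M_{n-1})$, where by convention $M_{-1}=0$. The starting observation is that, because $B\subseteq A_0$ and $A_0\cdot M_n\subseteq M_n$, every $M_n$ is a $B$-submodule of $M$, and for each $n$ there is a short exact sequence of $B$-modules
\[
0\to M_{n-1}\to M_n\to M_n/M_{n-1}\to 0.
\]
Additivity of length on short exact sequences then gives, by induction on $n$, the identity $\len_B M_n=\sum_{i=0}^{n}\len_B(M_i/M_{i-1})$, with the convention that a single infinite summand makes the whole expression infinite.

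For the graded side, recall from the definition that $\gr\mc{M}=\mc{M}/(t)\mc{M}\cong\bigoplus_{n\ge 0}M_n/M_{n-1}$ as $A_0$-modules, hence as $B$-modules. Since $B$-length is additive over arbitrary direct sums — the length of a direct sum being the sum of the lengths, interpreted in $\mathbb{Z}_{\ge 0}\cup\{\infty\}$ — we obtain $\len_B(\gr\mc{M})=\sum_{n\ge 0}\len_B(M_n/M_{n-1})$, which by the previous paragraph is precisely $\sup_n \len_B M_n=\lim_n \len_B M_n$.

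It remains to identify this quantity with $\len_B M$, and this is the step requiring care, since $M$ is merely the exhaustive union $\bigcup_n M_n$. The inequality $\sup_n\len_B M_n\le \len_B M$ is immediate from $M_n\subseteq M$. For the reverse, I would take any strict chain $0=L_0\subsetneq L_1\subsetneq\dots\subsetneq L_m$ of $B$-submodules of $M$ and choose $x_i\in L_i\setminus L_{i-1}$ for each $i$; as $M=\bigcup_n M_n$, there is an $n$ with $x_1,\dots,x_m\in M_n$, and then $L_0\cap M_n\subsetneq\dots\subsetneq L_m\cap M_n$ is a strict chain in $M_n$ (each $x_i$ lies in $L_i\cap M_n$ but not in $L_{i-1}$), so $\len_B M_n\ge m$. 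Taking the supremum over all such chains yields $\len_B M\le \sup_n\len_B M_n$, so the two agree. Combining the three identities gives $\len_B M=\len_B(\gr\mc{M})$.

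The main obstacle is exactly this last paragraph: the ``continuity'' of length along the exhaustive filtration, together with the bookkeeping ensuring the argument is valid uniformly whether the common value is finite or infinite. I note in passing that finite generation of $M$ over $A$ is not strictly needed for this length identity; it is the hypothesis under which the corollary is applied and which guarantees the intended finiteness in practice.
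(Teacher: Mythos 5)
Your proof is correct and follows essentially the same route as the paper: both rest on additivity of $B$-length along the short exact sequences $0\to M_{n-1}\to M_n\to M_n/M_{n-1}\to 0$ together with the identification $\gr\mc{M}\cong\bigoplus_{n}M_n/M_{n-1}$. The only difference is in the bookkeeping for the exhaustion $M=\bigcup_n M_n$ --- you prove $\len_B M=\sup_n\len_B M_n$ directly by intersecting a strict chain with a suitable $M_n$, whereas the paper splits into the case of infinitely many nonzero quotients (both lengths infinite) and the case where the filtration stabilizes at $M$ --- and your remark that finite generation over $A$ is not actually needed is consistent with the paper's argument, which likewise never invokes it.
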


    \begin{proof}
        Recall that $B$-length is additive on short exact sequences of
        $B$-modules by the Jordan-H\"older theorem, see \cite[Thm 2.13]{EisView}.
        Fix a natural number $\DD{nzero}{n}$.
%        Since $\DM$  is finitely generated, there exists  such that
%        $\DM_{\Dnzero} = \DM$, then
%        \[\DgM = \bigoplus_{0\leq n\leq {\Dnzero}} \DM_n/\DM_{n-1}\ t^n\]
%        is a decomposition of $\DgM$ into $\DB$-modules.
        The decomposition of $\DM$ via the exact sequences
        \begin{align*}
            0\to M_{{\Dnzero}-1} \to& M
        \to M/M_{{\Dnzero}-1} \to 0,\\ 0\to M_{{\Dnzero}-2}\to& M_{{\Dnzero}-1}\to
        M_{{\Dnzero}-1}/M_{{\Dnzero}-2}\to 0,\\&\dots\\ 0\to M_0 \to& M_1 \to M_1/M_0\to 0
        \end{align*}
        proves that if for infinitely many natural $n$ the quotients
        $M_{n}/M_{n-1}$ are non-zero then the $B$-lengths of $\DM, \DgM$ are
        infinite. Consequently, we may assume that there exists $\Dnzero$ such that $M_{\Dnzero} =
        M_{\Dnzero + 1} = \dots = M$. Then the above decomposition
        proves that $\DM$ and $\DgM$ have equal $B$-lengths.
    \end{proof}

    An important thing about the associated graded modules is that an exact
    sequence of filtered modules gives rise to an \emph{exact} sequence of
    associated graded modules:
    \begin{prop}\label{ref:exactoffiltered:prop}
        Let $k$ be a field and $A$ be a filtered $k$-algebra.
        Suppose that filtered $A$-modules $M, N, P$ form an exact sequence
        \[
        0\to N\to M \to P\to 0,
        \]
        such that the filtration $N_n$ is the preimage of $M_n$ and $P_n$ is
        the image of $M_n$ for all $n\in \Zzero$.
        This sequence gives rise to exact sequences
        \[
        0\to \mc{N}\to \mc{M} \to \mc{P}\to 0\quad \mbox{  and  }\quad 0\to \gr \mc{N}\to \gr\mc{M}\to \gr\mc{P}\to 0.
        \]
    \end{prop}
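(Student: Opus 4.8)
The plan is to establish the Rees sequence first and then deduce the graded sequence by a snake-lemma argument. Write $\phi\colon N\to M$ and $\psi\colon M\to P$ for the two maps. First I would make the induced maps explicit: $\phi$ and $\psi$ extend $t$-linearly to maps $N[t]\to M[t]\to P[t]$, and the filtration hypotheses are exactly what allows one to restrict these to maps $\mc{N}\to\mc{M}\to\mc{P}$, since $\phi(N_n)\subseteq M_n$ (as $N_n$ is the preimage of $M_n$) and $\psi(M_n)=P_n$ (as $P_n$ is the image of $M_n$). I would then verify exactness of
\[
0\to \mc{N}\to \mc{M}\to \mc{P}\to 0
\]
on each graded summand, that is on $N_nt^n$, $M_nt^n$, $P_nt^n$ separately. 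Injectivity on the left is inherited from $\phi$; surjectivity on the right is immediate from $\psi(M_n)=P_n$. The only place the hypotheses really enter is exactness in the middle: the degree-$n$ kernel of $\mc{M}\to\mc{P}$ is $(\ker\psi\cap M_n)t^n=(\im\phi\cap M_n)t^n$, and because $\phi$ is injective and $N_n$ is the full preimage of $M_n$, one has $\im\phi\cap M_n=\phi(N_n)$, which is precisely the degree-$n$ image of $\mc{N}$.

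For the graded sequence, the key observation is that multiplication by $t$ is injective on each of $\mc{N},\mc{M},\mc{P}$: these are submodules of the polynomial modules $N[t],M[t],P[t]$, on which $t$ is visibly a non-zero-divisor, and $\gr\mc{X}=\mc{X}/t\mc{X}$ is by definition the cokernel of this multiplication. I would therefore apply the snake lemma to the commutative diagram whose two rows are the short exact sequence just established and whose three vertical arrows are multiplication by $t$. Since each vertical arrow is injective, all three kernels vanish, while the three cokernels are $\gr\mc{N}$, $\gr\mc{M}$, $\gr\mc{P}$. The six-term exact sequence produced by the snake lemma then collapses to
\[
0\to \gr\mc{N}\to \gr\mc{M}\to \gr\mc{P}\to 0,
\]
which is the desired statement.

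I expect the main obstacle to be the middle-exactness in the first part, since this is the one step using both filtration hypotheses at once: one must identify $\im\phi\cap M_n$ with $\phi(N_n)$, which genuinely fails if $N_n$ is allowed to be smaller than the full preimage of $M_n$. The graded statement is then formal---right-exactness of $\gr$, being reduction modulo $t$, is automatic, so the only real content is injectivity of $\gr\mc{N}\to\gr\mc{M}$, and this is exactly what the vanishing of $\ker(t\mid_{\mc{P}})$ delivers via the connecting map of the snake lemma.
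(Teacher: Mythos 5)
Your proposal is correct and matches the paper's argument in substance: the paper likewise establishes the Rees sequence directly from the two filtration hypotheses and then derives the graded sequence from the single fact that $t$ is a non-zero-divisor on $\mc{P}\subseteq P[t]$, phrased there as torsion-freeness (hence flatness) of $\mc{P}$ over $k[t]$ so that $(-)\tensor_{k[t]}k[t]/t$ preserves exactness. Your snake-lemma formulation is the same computation --- the vanishing connecting map is precisely $\Tor_1^{k[t]}(\mc{P},k[t]/t)=0$ --- just spelled out more explicitly, and your identification $\im\phi\cap M_n=\phi(N_n)$ correctly isolates where the ``preimage'' hypothesis is used.
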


    \begin{proof}
        The existence and exactness of the first sequence follows from
        assumptions. For the second one, note that $\mc{P} \subseteq P[t]$ is a torsion free
        $k[t]$-module, so it is flat. Now apply $(-)\tensor_{k[t]} k[t]/t$ obtaining an exact sequence
        \[
        0\to \mc{N}/t\mc{N}\to \mc{M}/t\mc{M}\to \mc{P}/t\mc{P}\to 0.\qedhere
        \]
    \end{proof}

    The following proposition computes the associated graded in the simplest
    case -- for a cyclic submodule of a graded module.
    \begin{prop}\label{ref:initialideals:prop}
        Let $A = \bigoplus A^n$ be a $\Zzero$-graded ring and $M = \bigoplus
        M^n$ be a $\Zzero$-graded $A$-module. Suppose that $(s) \ideal M$
        is generated by an element \[s = s_1 + \dots + s_n,\] where $s_i\in
        M^i$ and $s_n$ satisfies $\annn{A}{s_n} \subseteq \annn{A}{s}$.
        The ideal $(s)$ has a filtration coming from the filtration on $M$
        given as
        in the Example~\ref{ref:gradedtofiltered:example}, and, with respect to
        this filtration,\[\DD{grs}{\gr(s)}  \simeq (s_n).\]
    \end{prop}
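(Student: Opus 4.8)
The plan is to realise $\gr(s)$ as the \emph{initial submodule} of $(s)$ sitting inside $M$, and then to show, using the annihilator hypothesis, that this initial submodule is exactly $(s_n)$. Throughout I write $N=(s)$ and equip it with the induced filtration $N_j=N\cap M_j$, where $M_j=\bigoplus_{i\le j}M^i$ is the filtration of Example~\ref{ref:gradedtofiltered:example}(2); this is the filtration referred to in the statement. I will also assume $s_n\neq 0$, so that $s_n$ really is the top-degree part of $s$ (otherwise $\annn{A}{s_n}=A\subseteq\annn{A}{s}$ forces $s=0$ and there is nothing to prove).

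First I would set up the identification. For each $j$ the assignment sending $x$ to its degree-$j$ homogeneous component gives a map $N_j\to M^j$ with kernel $N\cap M_{j-1}=N_{j-1}$, hence an injection $N_j/N_{j-1}\into M^j$. Assembling these over all $j$ identifies $\gr N=\bigoplus_j N_j/N_{j-1}$ with the graded subgroup $\mathrm{in}(N)\subseteq M$ spanned by the top-degree (leading) homogeneous parts of the elements of $N$. Since $\gr A\simeq A$ by Example~\ref{ref:gradedtofiltered:example}(2), one checks that this identification is $A$-linear and that $\mathrm{in}(N)$ is an $A$-submodule: for homogeneous $a$ and a leading term $v$ of $x\in N$, the element $av$ is either $0$ or the leading term of $ax\in N$. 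One inclusion of the desired equality is then immediate: as $s_n$ is the leading term of $s\in N$, we have $s_n\in\mathrm{in}(N)$, and since $\mathrm{in}(N)$ is an $A$-submodule, $(s_n)=As_n\subseteq\mathrm{in}(N)$. It therefore suffices to prove the reverse inclusion $\mathrm{in}(N)\subseteq(s_n)$.

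The substance is this reverse inclusion, and here is where the assumption $\annn{A}{s_n}\subseteq\annn{A}{s}$ enters. Take a nonzero $x\in N$ and write $x=as$; decompose $a=a_m+(\text{lower-degree terms})$ with $a_m\in A^m$ its top homogeneous part. Because $a_i s_l\in M^{i+l}$ with $i\le m$ and $l\le n$, the unique term of total degree $m+n$ occurring in $as$ is $a_m s_n$. If $a_m s_n\neq 0$ this is the leading term of $x$, and it lies in $(s_n)$. If instead $a_m s_n=0$, then $a_m\in\annn{A}{s_n}\subseteq\annn{A}{s}$, so $a_m s=0$ and $x=(a-a_m)s$ with $a-a_m$ of strictly smaller top degree; I would then repeat the argument with this new representative. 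The top degree lies in $\Zzero$, so the descent terminates, and since $x\neq 0$ it cannot terminate at the zero representative; it thus ends at a representative whose top part does not annihilate $s_n$, exhibiting the leading term of $x$ as an element of $(s_n)$. This gives $\mathrm{in}(N)\subseteq(s_n)$ and completes the proof. The only delicate point is precisely this descent: the naive candidate leading term ``$a_m s_n$'' may vanish, and the annihilator hypothesis is exactly what lets one delete such a top part of $a$ without changing $x$.
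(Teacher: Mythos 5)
Your proof is correct and follows essentially the same route as the paper: both identify $\gr(s)$ with the submodule of leading terms inside $M$ via the isomorphism of Example~\ref{ref:gradedtofiltered:example}, and both use the hypothesis $\annn{A}{s_n}\subseteq\annn{A}{s}$ to discard the top homogeneous components of $a$ that kill $s_n$, so that the surviving top component times $s_n$ is the leading term of $as$. The only cosmetic difference is that you phrase this as an iterated descent, whereas the paper selects in one step the maximal-degree component $a'$ of $a$ with $a'\cdot s_n\neq 0$.
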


    \begin{proof}
        The isomorphism $i:\gr \mc{M}  \simeq M$ from Example
        \ref{ref:gradedtofiltered:example} sends  ${\overline{m_0 + \dots +
        m_n}\in M_n/M_{n-1}}$, where $m_j\in M^j$, to $m_n$. Clearly
        $\DD{igrs}{i\big(\Dgrs\big)}$ is a submodule
        of $M$
        containing $(s_n)$. Take any $l\in \Zzero$ and a non-zero element
        $\overline{a\cdot s}\in
        M_l/M_{l-1}$. Let $a'$ be the homogeneous component of $a$ of maximal
        degree such that $a'\cdot s_n \neq 0$. From $\annn{A}{s_n} \subseteq
        \annn{A}{s}$ it follows that $a'\cdot s_n$ is
         the leading coefficient of $a\cdot s$. Under $i$ the element $a\cdot
         s$ is mapped to $a'\cdot s_n \in (s_n)$, which proves $\Digrs \subseteq
        (s_n)$.
    \end{proof}

    The presented methods, although very elementary, are quite handy when
    dealing with families of finite  algebras, as the following example shows:
    \begin{example}
        Let $k$ be an algebraically closed field, then $\Spec k[x, y, t]/(x^2 + t\cdot y\cdot x, y^2)\to \Spec k[t]$ is
        flat.
            
            \normalfont Indeed if we take the grading on $k[x, y, t]/y^2$ by powers of $x$,
            then the top degree form of $x^2 + t\cdot y\cdot x$ is equal to
            $x^2$ and thus independent of $t$.
            It follows that the fiber over each closed point $t\in T$ has
            $k$-rank equal to $\dimk k[x, y]/(x^2, y^2)$ and by Corollary~\ref{ref:lookatclosed:cor} we
            get that the morphism is flat.
    \end{example}

    A powerful generalization is given by the following theorem. Note that
    there are no assumptions neither on the ring nor on
    the module, in particular the module does not have to be finite.
    \begin{thm}\label{ref:bjorkflatness:thm}
        Let $A$ be a filtered ring and $M$ be a filtered $A$-module. If $\DgM$ is flat over $\DgA$, then
        $M$ is flat over $A$.
    \end{thm}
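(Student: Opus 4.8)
The plan is to reduce flatness of $M$ over $A$ to the flatness of the Rees module $\mc{M}$ over the Rees algebra $\mc{A}$, and then to transfer the hypothesis on $\gr\mc{M}$ upward through the fact that $t$ is a nonzerodivisor. The key observation is the \emph{local criterion for flatness} in its graded form: given that $\mc{A}$ is a ring in which $t$ is a central nonzerodivisor and $\mc{M}$ is a $t$-torsion-free $\mc{A}$-module (both true by construction, since $\mc{A}\subseteq A[t]$ and $\mc{M}\subseteq M[t]$), the reduction $\gr\mc{M} = \mc{M}/t\mc{M}$ being flat over $\gr\mc{A} = \mc{A}/t\mc{A}$ should force $\mc{M}$ to be flat over $\mc{A}$.

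First I would recall the two specializations relating the Rees constructions to the original and the graded objects: $A = \mc{A}/(t-1)$ and $M = \mc{M}/(t-1)\mc{M}$, while $\gr\mc{A} = \mc{A}/(t)$ and $\gr\mc{M} = \mc{M}/(t)\mc{M}$. Flatness is preserved under base change by Lemma~\ref{ref:basicflat:lem}, so once I know $\mc{M}$ is flat over $\mc{A}$, tensoring with $\mc{A}/(t-1) = A$ immediately yields that $M = \mc{M}\tensor_{\mc{A}} A$ is flat over $A$, which is the desired conclusion. Thus the entire problem collapses to the single implication
\[
\gr\mc{M}\ \text{flat over}\ \gr\mc{A}\ \Longrightarrow\ \mc{M}\ \text{flat over}\ \mc{A}.
\]

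To carry out this core implication I would invoke a fibered version of the local flatness criterion. The element $t$ lies in $\mc{A}$, is a nonzerodivisor on $\mc{M}$ (since $\mc{M}\subseteq M[t]$ is $t$-torsion free, being a submodule of a free $k[t]$-module when $A$ is a $k$-algebra, by Lemma~\ref{ref:pidflatness:lem}), and $\mc{M}/t\mc{M}$ is flat over $\mc{A}/t\mc{A}$ by hypothesis. The standard statement I want asserts: if $t$ is a nonzerodivisor on both $\mc{A}$ and $\mc{M}$ and $\mc{M}/t\mc{M}$ is flat over $\mc{A}/t\mc{A}$, then $\mc{M}$ is flat over $\mc{A}$. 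By Lemma~\ref{ref:fingen:lem} it suffices to test flatness on finitely generated submodules, so I would first reduce $\mc{M}$ to the finitely generated case, allowing me to use $\Tor$ and a Noetherian-friendly form of the criterion. The verification that $\Tor_1^{\mc{A}}(\mc{A}/I, \mc{M}) = 0$ for ideals $I$ would then proceed by writing the long exact $\Tor$ sequence attached to multiplication by $t$ and playing the $t$-torsion-freeness against the flatness of the reduction modulo $t$.

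The main obstacle I anticipate is justifying the local flatness criterion in the requisite generality: the cleanest classical forms assume a \emph{Noetherian local} base and finitely generated modules, whereas here $A$ (and hence $\mc{A}$) need not be Noetherian, and $M$ need not be finite. I expect to circumvent this by combining Lemma~\ref{ref:fingen:lem} (reducing to finitely generated submodules of $M$, whose Rees modules are finitely generated over $\mc{A}$) with a direct $\Tor$-theoretic argument that uses only that $t$ is a nonzerodivisor and the graded structure, rather than appealing to a Noetherian-local hypothesis. A delicate point within this will be ensuring that the filtrations on the finitely generated submodules are compatible with the grading so that Proposition~\ref{ref:exactoffiltered:prop} applies and the associated-graded modules of the pieces assemble correctly; but granting the $t$-nonzerodivisor local criterion, the graded flatness hypothesis is exactly what is needed to kill the relevant $\Tor$ groups after reducing modulo $t$.
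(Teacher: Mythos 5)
Your architecture is genuinely different from the paper's, and the difference is where the gap lies. You propose to prove that the Rees module $\mc{M}$ is flat over the Rees algebra $\mc{A}$ and then base change along $\mc{A}\to\mc{A}/(t-1)=A$; the paper never establishes flatness of $\mc{M}$ over $\mc{A}$. It proves only the vanishing $\Tor_i^{\mc{A}}(\mc{I},\mc{M})=0$ for the Rees modules $\mc{I}$ of ideals $I\ideal A$, and then specializes $t\mapsto 1$ to get $\Tor_1^{A}(I,M)=0$ for every $I\ideal A$, which suffices. Your specialization step is fine; the problem is the ``standard statement'' you rest step one on, namely: $t$ a nonzerodivisor on $R$ and on $N$, together with $N/tN$ flat over $R/tR$, implies $N$ flat over $R$. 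This is false in general. Take $R=k[x,t]$ and $N=R\oplus R/(xt-1)$: then $t$ is a nonzerodivisor on $R$ and on $N$ (it acts invertibly on the second summand), and $N/tN\simeq k[x]$ is flat over $R/tR=k[x]$, yet $N$ is not flat over $R$ because $\Tor_1^{R}\bigl(R/(xt-1),\,R/(xt-1)\bigr)\simeq R/(xt-1)\neq 0$. The correct non-noetherian criterion requires in addition that $N[t^{-1}]$ be flat over $R[t^{-1}]$; but in your setting $\mc{M}[t^{-1}]=M[t,t^{-1}]$ and $\mc{A}[t^{-1}]=A[t,t^{-1}]$, so that extra hypothesis is \emph{equivalent} to the flatness of $M$ over $A$ you are trying to prove --- the repaired criterion is circular.

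The only non-circular way to close the argument is the one you gesture at but do not supply: $\mc{A}$ and $\mc{M}$ are non-negatively graded with $t$ homogeneous of degree one, so the relevant obstruction modules are graded, bounded below, and $t$-divisible, hence zero by graded Nakayama. This is the entire content of the theorem, not a technical detail, and it is exactly the paper's key step (``one checks directly that $\mc{H}_i=0$''). Even granting it, your route still requires controlling $\Tor_1^{\mc{A}}(\mc{A}/\mathfrak{a},\mc{M})$ for arbitrary, possibly non-homogeneous, ideals $\mathfrak{a}\ideal\mc{A}$, or a separate justification that graded ideals suffice to test flatness of a graded module --- a difficulty the paper sidesteps entirely by only ever tensoring with Rees modules $\mc{I}$ of ideals of $A$, for which the degreewise induction goes through and which is all that is needed after setting $t=1$. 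Finally, your appeal to Lemma~\ref{ref:fingen:lem} runs in the wrong direction: that lemma reduces flatness of a module to flatness of each of its finitely generated submodules, but neither flatness nor the hypothesis on $\gr\mc{M}$ restricts to submodules (a finitely generated submodule of a flat module need not be flat), so no reduction to the finitely generated case is available along those lines.
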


    \begin{proof}[Sketch of proof]
        This is a special case of \cite[Prop 3.12]{bjoerk}.

        We will prove the theorem with an additional
        assumption that $A$ is an algebra over a field $k \subseteq A_0$.
        Choose a free resolution $\DD{mcP}{\mc{P}_{\bullet}}$ of the Rees
        module $\DD{mcM}{\mc{M}}$ in the category of $\DAA$-modules. Since
        $\DmcM \subseteq M[t]$ is flat over
        $\DD{line}{k[t]}$ by Lemma~\ref{ref:pidflatness:lem} it
        follows that $\DmcP \tensor_{\Dline} \Dline/t$ is a free resolution of
        $\gr \DmcM$. Choose any $I\ideal A$ and equip it with the induced
        filtration, obtaining the Rees module $\DD{mcI}{\mc{I}}$.
        Note that $\DmcI$ is flat
        over $\Dline$, again by Lemma~\ref{ref:pidflatness:lem}.
        Since $\gr \DmcM$ is a flat $\gr \DD{mcA}{\DAA}$-module it follows that $\gr
        \DmcI\tensor_{\gr \DmcA} (-)$ is exact on $\DmcP
        \tensor_{\Dline} \Dline/t$, but as $\gr \DmcA = \DmcA/t$ this shows that
        \begin{equation}\label{eqn:maincomplex}
             \DmcI\tensor_{\DmcA} \DmcP \tensor_{\Dline} \Dline/t
             \simeq  \gr \DmcI\tensor_{\gr \DmcA} \pp{\DmcP \tensor_{\Dline}
             \Dline/t}
            \end{equation} is exact.

        The module $\DmcM$ is flat over $k[t]$, thus $\DmcM \tensor \DmcI
        \subseteq \DmcM$ is also flat. Similarly for any $i$ the module $\DmcI\tensor_{\mc{A}} \DD{mcPi}{\mc{P}_i}$
        is flat over $\Dline$, in particular
        multiplication by $t$ is injective on $\DmcI\tensor_{\mc{A}} \DmcPi$.
        From \eqref{eqn:maincomplex} it follows that the homology modules
        $\DD{mcHi}{\mc{H}_i}$ of
        $\DmcI \tensor_{\mc{A}} \DmcPi$ satisfy $t\cdot \DmcHi = \DmcHi$. This
        means that $\im d_{i+1} + t\cdot \ker d_i = \ker d_i$, where
        $d_{i}:P_{i}\to P_{i-1}$. Since $\im d_{i+1}, \ker d_i$ are
        filtered $A$-modules one checks directly that $\DmcHi =
        0$, thus $\DmcPi \tensor_{\mc{A}} \DmcI$ is exact. Now the complex $\DmcP
        \tensor_{\mc{A}} \DmcI\tensor_{\Dline} \Dline/(t-1)  \simeq
        \DmcP/(t-1)
       \tensor_{A} I$ is exact and $A$-modules $\DmcPi/(t-1)$ form a free
       resolution of $\DmcM/(t-1) = M$,
        hence we have $\Tor_1^{A}(I, M) = 0$ and this is a
        sufficient condition for $A$-flatness of $M$ by \cite[Prop 3.2.4]{WeibHA}.
    \end{proof}

    \begin{example}
        \begin{enumerate}
            \item The family
                \[\Spec k[x, y, t]/(x^2 + t\cdot y\cdot x)\to \Spec k[t]\mbox{
                is flat.}\]

                \normalfont Indeed, we choose a trivial grading on $k[t]$,
                grading by powers of $x$ on $k[x, y, t]$ and use Propositions
                \ref{ref:exactoffiltered:prop}, \ref{ref:initialideals:prop} to compute $\gr k[x, y, t]/(x^2
                + t\cdot y\cdot x)  \simeq k[x, y, t]/(x^2)$ which is clearly flat over
                $k[t]$. Note that the grading preserves $k[t]$ and so this is an
                isomorphism of $k[t]$-modules.
            \item Suppose we would like to compute \[\gr \DD{II}{(x^2 - 1,
                xy -1)},\] where $\DII\ideal k[x, y]$, with respect
                to the grading by total degree. Then \[x - y = y\cdot (x^2 -1) -
                x\cdot (xy-1)\notin (x^2, xy)\] is an
                element of $\gr \DII$, so the leading forms of $x^2 - 1, xy-1$
                do not generate $\gr \DII$.
                This suggests that $x^2 - 1, xy -1$ is not a ``good'' generating
                set; a ``better one'' is $x-y, x^2 - 1$ because $\gr
                \DII = (x - y, x^2)$.
                One can see a connection with Gr\"obner bases.
        \end{enumerate}
        
    \end{example}

    \section{The Hilbert scheme}\label{sec:hilbscheme}

    In this section we follow a very accessible introduction by Str\"omme \cite{Stromme}.

    One advantage of flat projective families is that there exists a good
    scheme parametrising them, known as the Hilbert scheme. We need some
    definitions.
    \begin{defn}[Hilbert functor]
        Let $X$ be projective an $S$-scheme. Define a functor
        \[\DD{HilbX}{\underline{\Hilb}_{X/S}}:Sch_S^{op} \to Set\] by putting, for any $S$-scheme $T$,
        \[
        \DD{HilbXT}{\underline{\Hilb}_{X/S}(T)} := \left\{ \mbox{ closed subschemes }Z
        \subseteq X \times_S T \mbox{ such that the projection }Z \to T\mbox{ is flat }\right\}
        \]
        and for any morphism $T'\to T$ the map
        $\DHilbXT\to \DD{HilbXTT}{\DHilbX(T')}$ sending $Z \subseteq X\times_S
        T$ to $Z' = Z \times_T T'$.
        In a similar way for any polynomial $\DD{P}{P}\in \QQ[x]$ we can
        define a (sub)functor
        $\DD{HilbXP}{\underline{\Hilb}_{X/S}^P}:Sch_S^{op} \to Set$ by letting
        \begin{align*}
            {\DHilbXP(T)} := \big\{ &\mbox{ closed subschemes }Z
            \subseteq X \times_S T \mbox{ such that the projection }Z \to
            T\mbox{ is flat }\\&\mbox{ and the Hilbert polynomial of
            }\left(\OO{Z}\right)_t \mbox{ is
            equal to }P\mbox{ for any }t\in T\big\},
        \end{align*}
        see Theorem~\ref{ref:Harflat:thm}.
    \end{defn}

    \def\DHSchX{\Hilb_{X/S}}
    \begin{thm}\label{ref:HilbertExistence:thm}
        Suppose that $X$ is projective over a noetherian scheme $S$.
        For any polynomial $P\in \QQ[x]$ the functor $\DHilbXP$ is
        represented by a \emph{projective} scheme $\DD{HSchXP}{\DHSchX^{P}}$ and the functor
        $\DHilbX$ is represented by a scheme $\DHSchX$ equal to
        the countable disjoint union of all
        schemes $\DHSchXP$ where $P\in \QQ[x]$ (some of which are empty). We
        call this scheme the Hilbert scheme of $X/S$.
    \end{thm}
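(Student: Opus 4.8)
The plan is to reduce the entire statement to the single case of projective space over $\mathbb{Z}$, and then to realise the Hilbert functor as a locally closed subfunctor of a Grassmann functor, cut out by explicit rank conditions coming from flatness. First I would dispose of the disjoint-union clause: for any flat $Z\subseteq X\times_S T$ the fibrewise Hilbert polynomial $P_t$ is locally constant on $T$ by Theorem~\ref{ref:Harflat:thm} (applied on the connected components of $T$), so $T$ decomposes into the open-and-closed loci $T_P=\{t:P_t=P\}$ and $Z|_{T_P}\in\underline{\Hilb}_{X/S}^{P}(T_P)$. This exhibits $\underline{\Hilb}_{X/S}$ as the disjoint union of the subfunctors $\underline{\Hilb}_{X/S}^{P}$, hence it is represented by the disjoint union of the schemes $\Hilb_{X/S}^{P}$ once each of these is built. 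Next I would reduce to $X=\mathbb{P}^n$: fixing a closed embedding $X\hookrightarrow\mathbb{P}^n_S$, a subscheme $Z\subseteq\mathbb{P}^n_T$ lies in $X\times_S T$ exactly when its ideal contains that of $X\times_S T$, a closed condition, so $\underline{\Hilb}_{X/S}^{P}$ is a closed subfunctor of $\underline{\Hilb}_{\mathbb{P}^n_S/S}^{P}$ and is thus representable by a closed subscheme as soon as the latter is. Since the Hilbert functor commutes with base change, $\underline{\Hilb}_{\mathbb{P}^n_S/S}^{P}=\underline{\Hilb}_{\mathbb{P}^n_{\mathbb{Z}}/\mathbb{Z}}^{P}\times_{\mathbb{Z}}S$, so it suffices to construct the universal object over $\mathbb{Z}$ and pull back.

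The technical heart is to establish uniform regularity. Using Castelnuovo--Mumford regularity, refined to a bound depending only on $n$ and $P$, there is an integer $m=m(n,P)$ such that for \emph{every} subscheme $Z\subseteq\mathbb{P}^n_{k}$ over any field $k$ with Hilbert polynomial $P$ the ideal sheaf $\mathcal{I}_Z$ is $m$-regular. The consequences I need are that for all $d\ge m$ one has $H^{>0}(\mathcal{I}_Z(d))=0$ and $\dim_k H^0(\mathcal{I}_Z(d))=\binom{n+d}{n}-P(d)$, and that the multiplication $H^0(\mathcal{I}_Z(d))\otimes H^0(\mathcal{O}(1))\to H^0(\mathcal{I}_Z(d+1))$ is surjective. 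In particular $Z$ is completely recovered from the single subspace $W_Z:=H^0(\mathcal{I}_Z(m))$ of the fixed-dimensional space $V:=H^0\!\left(\mathcal{O}_{\mathbb{P}^n}(m)\right)$, a subspace of codimension $P(m)$.

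With this in hand I would build the scheme inside a Grassmannian. The assignment $Z\mapsto\bigl[V\twoheadrightarrow V/W_Z\bigr]$ embeds the functor into the functor of rank-$P(m)$ quotients of $V$, which is representable by the projective scheme $G=\mathrm{Gr}(P(m),V)$ (the representability of the Grassmannian being the base case I would treat first). Over $G$ I carry the universal subbundle $\mathcal{R}\subseteq V\otimes\mathcal{O}_G$ and form the multiplication map $\mathcal{R}\otimes H^0(\mathcal{O}(1))\to H^0(\mathcal{O}(m+1))\otimes\mathcal{O}_G$, whose cokernel has fibrewise rank equal to the Hilbert value of the cut-out scheme in degree $m+1$. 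I then define $\Hilb^P$ to be the locus where this cokernel is locally free of the prescribed rank $P(m+1)$; by semicontinuity of fibre rank this is a locally closed condition. On it, the restricted universal subbundle generates the ideal of a tautological family which, having constant Hilbert function within the regular range, is flat with Hilbert polynomial $P$ by Theorem~\ref{ref:Harflat:thm}. Verifying that this scheme and family represent $\underline{\Hilb}^P$ is then a matter of unwinding the universal property of $G$ together with the recovery statement of the regularity step. Finally, for projectivity, Gotzmann's persistence theorem (taking $m$ to be the Gotzmann number) shows that the rank conditions in degrees $m$ and $m+1$ already force the correct Hilbert polynomial in \emph{all} degrees, upgrading the locally closed condition to a closed one, so $\Hilb^P$ is a closed subscheme of the projective $G$; alternatively one checks the valuative criterion directly, extending a flat family over the punctured spectrum of a discrete valuation ring by scheme-theoretic closure, whence the quasi-projective $\Hilb^P$ is proper and so closed in $G$.

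The main obstacle is the second step: the uniform boundedness of the regularity. Everything downstream is essentially formal once one knows that a single $m$ governs the entire family and that two consecutive Hilbert values control all the rest. Producing such an $m$, and the accompanying persistence statement that is needed to make the construction \emph{closed} rather than merely locally closed, is exactly the substantial input supplied by the Castelnuovo--Mumford and Gotzmann theorems, and is where I expect all the real work to lie.
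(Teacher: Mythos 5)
The paper does not prove this theorem itself---it simply cites \cite[Thm 1.1]{HarDeform}---and your sketch is precisely the standard Grothendieck--Mumford--Gotzmann construction carried out there: uniform Castelnuovo--Mumford regularity, embedding into a Grassmannian via $H^0(\mathcal{I}_Z(m))$, and Gotzmann persistence to control the Hilbert polynomial, so your approach is essentially the same as the one the paper defers to, and it is correct in outline. The only small slips are that local constancy of the Hilbert polynomial over a general (non-integral) noetherian base is not literally covered by Theorem~\ref{ref:Harflat:thm} as stated in the paper, and that Gotzmann persistence (or rank conditions in all degrees $\ge m$, which stabilize by noetherianity) is already needed to guarantee flatness of the tautological family, not merely to upgrade the locally closed locus to a closed one.
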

    \begin{proof}
        See \cite[Thm 1.1]{HarDeform}.
    \end{proof}

    One important consequence of representability is the fact that $S$-points
    of $\DHSchX$ correspond to closed subschemes of $X$. For a closed
    subscheme $Z \subseteq X$ by $[Z]\in \DHSchX$ we denote the corresponding $S$-point.

    Not much is known about the local geometry of the Hilbert scheme; for
    example in many cases we do not know if this scheme is reduced. However,
    we have the following result by Grothendieck, computing the tangent space
    at a point:
    \begin{thm}
        Let $k$ be a field, $X$ be projective over $\Spec k$ and $Z
        \subseteq X$ be a closed subscheme given by the ideal sheaf
        $\DD{Ish}{\mc{I}}$.
        The tangent space to $\DD{HSchXk}{\Hilb_{X/k}}$ at the point $[Z]$ is isomorphic to
        $H^{0}\left( Z, \DD{Nsh}{\mc{N}_{Z/X}} \right)$, where $\DNsh$ is the
        normal sheaf of $Z$ in $X$.
    \end{thm}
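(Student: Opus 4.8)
The plan is to identify the tangent space to the Hilbert scheme at $[Z]$ with the set of $k[\varepsilon]/\varepsilon^2$-points of $\Hilb_{X/k}$ lying over $[Z]$, and then to show that such points are in natural bijection with global sections of the normal sheaf $\mc{N}_{Z/X} = \Homsht{\OO{X}}{\mc{I}}{\OO{Z}}$. First I would recall the general principle that for any scheme $H$ over $k$ and any $k$-point $h:\Spec k\to H$, the Zariski tangent space $T_h H$ is canonically identified with the set of morphisms $\Spec k[\varepsilon]/\varepsilon^2\to H$ that restrict to $h$ on the closed point; this is a formal fact about dual numbers. Since $\Hilb_{X/k}$ represents the functor $\DHilbXk$, these $k[\varepsilon]/\varepsilon^2$-points are exactly the elements of $\DHilbXk\big(\Spec k[\varepsilon]/\varepsilon^2\big)$ restricting to $Z$ over the closed point, i.e.\ the flat families $\mc{Z}\subseteq X\times_k \Spec k[\varepsilon]/\varepsilon^2$ whose special fiber is $Z$. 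So the whole problem reduces to classifying such first-order deformations of $Z$ inside $X$.

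Next I would carry out the local classification. Working affine-locally on $X$, write $B = \Gl{\OO{X}}$ on an open set and let $I\ideal B$ be the ideal of $Z$, so $\OO{Z}$ corresponds to $B/I$. A closed subscheme $\mc{Z}$ of $X\times_k\Spec k[\varepsilon]/\varepsilon^2$ reducing to $Z$ is given by an ideal $\tilde I$ of $B[\varepsilon]/\varepsilon^2$ with $\tilde I \equiv I \pmod{\varepsilon}$. The flatness of $\mc{Z}$ over $k[\varepsilon]/\varepsilon^2$ is the crucial constraint: since $k[\varepsilon]/\varepsilon^2$ is local Artinian, flatness of $B[\varepsilon]/\tilde I$ is equivalent to the statement that $\tilde I\cap \varepsilon B[\varepsilon]/\varepsilon^2 = \varepsilon I$, equivalently that the sequence stays exact after tensoring with the residue field. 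Concretely, every element of $\tilde I$ has the form $f + \varepsilon g$ with $f\in I$, and flatness forces $g$ to be well-defined modulo $I$ as a function of $f$; the assignment $f\mapsto \bar g\in B/I$ turns out to be a $B$-module homomorphism $I\to B/I$ (one checks additivity and the Leibniz-type relation, which collapses because $\varepsilon^2=0$). This sets up a bijection between flat first-order deformations and elements of $\Homt{B}{I}{B/I} = \Homt{B}{I}{\OO{Z}}$, which is exactly $\Gl{\mc{N}_{Z/X}}$ on that affine chart.

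I would then globalize: the local module homomorphisms $I\to \OO{Z}$ glue to a global section of the sheaf $\mc{H}om_{\OO{X}}\big(\mc{I},\OO{Z}\big) = \mc{N}_{Z/X}$, because the construction of $\bar g$ from $\tilde I$ is canonical and hence compatible on overlaps. Since $\mc{N}_{Z/X}$ is supported on $Z$, its global sections over $X$ agree with its global sections over $Z$, giving $H^0\!\big(Z,\mc{N}_{Z/X}\big)$. Finally I would verify that the bijection is $k$-linear, so that it is an isomorphism of vector spaces and not merely of sets: the additive and scalar structure on tangent vectors, coming from the two natural maps making $\Spec k[\varepsilon]/\varepsilon^2$ into a group object over $\Spec k$, matches the module structure on $\Hom$ under the correspondence.

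The main obstacle is the flatness computation in the local step: one must show precisely that flatness of $B[\varepsilon]/\tilde I$ over $k[\varepsilon]/\varepsilon^2$ is equivalent to the condition $\tilde I = \{\, f + \varepsilon g : f\in I,\ g\in \varphi(f) + I \,\}$ for a well-defined homomorphism $\varphi$, and that \emph{every} such $\varphi$ arises. The forward direction uses the local criterion for flatness over an Artinian base (flatness $\Leftrightarrow$ $\Tor_1(k[\varepsilon]/\varepsilon^2{,}\,B[\varepsilon]/\tilde I)=0$, which unwinds to $\varepsilon\tilde I = \varepsilon I$); the reverse direction requires checking that an arbitrary homomorphism $\varphi:I\to B/I$ produces a genuine ideal $\tilde I$ that is flat, where the hypothesis $\varepsilon^2=0$ is exactly what makes $\tilde I$ closed under multiplication. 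Everything else — the dual-numbers description of the tangent space, the sheafification, and the linearity — is formal once this equivalence is in hand.
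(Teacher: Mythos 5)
Your proposal is correct and is exactly the argument the paper has in mind: its proof consists of the single line ``By classifying flat families over $k[\varepsilon]/\varepsilon^2$'' with a citation to Hartshorne's \emph{Deformation theory}, Thm I.2.4, and your write-up simply carries out that classification in detail (dual-numbers description of the tangent space via representability, the local equivalence between flatness of $\tilde I$ and a well-defined homomorphism $I\to B/I$, gluing, and $k$-linearity). No substantive difference in route, and the key flatness step you isolate is handled correctly.
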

    \begin{proof}
        By classifying flat families over $k[\varepsilon]/\varepsilon^2$; see
        \cite[Thm I.2.4]{HarDeform}.
    \end{proof}

    Being interested in finite schemes, we will use only a local
    consequence of this result:
    \begin{cor}\label{ref:tangentspaceaffinely:cor}
        Suppose that $k$ is a field, $X$ is projective over $\Spec k$ and let $Z\subseteq X$
        be a closed scheme with support contained in an open affine subscheme
        $U = \Spec A$ of $X$, where the subscheme $Z$ is cut out by an ideal
        $I\ideal A$. Then the tangent space to $\DHSchXk$ at $[Z]$ is
        isomorphic to $\Homt{A/I}{I/I^2}{A/I}$.
    \end{cor}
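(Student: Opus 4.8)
The plan is to deduce this from the preceding description of the tangent space to $\Hilb_{X/k}$ at $[Z]$ as $H^0(Z, \mc{N}_{Z/X})$. Writing the normal sheaf through its standard presentation $\mc{N}_{Z/X} \simeq \Homsht{\OO{Z}}{\mc{I}/\mc{I}^2}{\OO{Z}}$, the task reduces to evaluating the global sections of this sheaf of homomorphisms and matching them with the module $\Homt{A/I}{I/I^2}{A/I}$. The whole argument is a passage from the projective, sheaf-theoretic statement to an affine, purely module-theoretic one, exploiting that the relevant sheaf lives entirely over the affine chart $U$.

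First I would reduce the computation of global sections from $X$ to $U$. Since the support $|Z|$ is contained in the open affine $U$ and $U$ is open in $X$, the closed immersion $Z\into X$ factors through $U$, so that $Z$ is a closed subscheme of $U = \Spec A$ cut out by $I$ and $\OO{Z}\simeq \widetilde{A/I}$. The sheaf $\mc{N}_{Z/X}$ is supported on $|Z|$, hence its restriction to the open set $X\setminus |Z|$ vanishes. Applying the sheaf gluing axiom to the open cover $X = U\cup (X\setminus |Z|)$, on whose overlap $U\setminus |Z|$ the two restrictions agree (both being zero), I obtain a canonical isomorphism $H^0(X, \mc{N}_{Z/X})\simeq H^0(U, \mc{N}_{Z/X}|_U)$.

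It remains to identify the latter group on the affine scheme $U = \Spec A$. The ideal sheaf $\mc{I}$ restricts to $\widetilde{I}$ on $U$, so $\mc{I}/\mc{I}^2$ corresponds to the $A/I$-module $I/I^2$ and $\mc{N}_{Z/X}|_U\simeq \Homsht{\OO{Z}}{\widetilde{I/I^2}}{\widetilde{A/I}}$. Because $X$ is projective, hence of finite type, over the field $k$, the ring $A$ is noetherian; therefore $I$ is finitely generated and $I/I^2$ is a finitely presented $A/I$-module. Under this finiteness hypothesis the formation of the sheaf $\mc{H}om$ commutes with passage to the associated module, and its global sections recover $\Homt{A/I}{I/I^2}{A/I}$, see \cite[II.5]{HarAG}. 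Chaining the isomorphisms yields the claim.

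I expect the main obstacle to be the bookkeeping around the finite-presentation hypothesis: it is precisely what guarantees that the $\mc{H}om$ of the module sheaves is again the sheaf associated to $\Homt{A/I}{I/I^2}{A/I}$ and that taking global sections introduces no extra terms. The support-and-gluing reduction, by contrast, is routine once one observes that $Z$ genuinely is a closed subscheme of $U$.
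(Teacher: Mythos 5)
Your argument is correct and follows essentially the same route as the paper: identify the tangent space with $H^0(X,\mc{N}_{Z/X})$, observe that the normal sheaf is supported on $|Z|\subseteq U$ so its global sections are computed over the affine chart $U$, and then use the sheaf--module correspondence on $\Spec A$ to obtain $\Homt{A/I}{I/I^2}{A/I}$. The only cosmetic difference is that you spell out the gluing reduction and the finite-presentation point that the paper compresses into ``clearly'' and ``equivalence of categories''.
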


    \begin{proof}
        Let $\DIsh$ be the ideal sheaf of $Z\subseteq X$.
        Recall that $\DNsh = \Homsht{\OO{X}}{\DIsh/\DIsh^2}{\OO{X}/\DIsh}$ may be
        regarded as a coherent sheaf on $X$, by \cite[Remark II.8.9.1]{HarAG}. Clearly, its
        global sections are equal to its sections over $U$:
\[H^{0}\left( X, \DD{Nsh}{\mc{N}_{Z/X}} \right) = H^0\left( U,
\Homsht{\OO{X}}{\DIsh/\DIsh^2}{\OO{X}/\DIsh} \right)  \simeq
\Homt{A}{I/I^2}{A/I}\simeq  \Homt{A/I}{I/I^2}{A/I},\]
because on the affine scheme $U$ we have the equivalence of categories of
quasi-coherent sheaves and modules.
    \end{proof}

    \subsection{The smoothable component}

    Fix a natural number $r$ and a projective variety $X$ over an algebraically
    closed field $k$.
    In this subsection we investigate the scheme $\DD{HSchXr}{\DHSchXk^r},$ where we
    think of $r\in \mathbb{Z}$ as a constant polynomial. The scheme
    $\DHSchXr$ is commonly referred as to ``the Hilbert scheme of $r$ points of
    $X$''. As the name suggests, the simplest example of a zero-dimensional
    subscheme of degree $r$ is a disjoint union of $r$ reduced points of
    $X$. The closure of points of $\DHSchXr$ corresponding to such subschemes will be called
    the smoothable component, see Definition~\ref{ref:defsmoothable:def}. The
    main goal of this subsection is to show that it is indeed an irreducible
    component of $\DHSchXr$.

    \begin{lem}\label{ref:tangentspaceatsmooth:lem}
        Let $\{p_1,\dots,p_r\} = Z \subseteq X$ be a disjoint union of $r$ closed points; then the tangent
        space to $\DHSchXr$ at the point $[Z]$ has $k$-rank equal to the sum
        of $k$-ranks of tangent spaces to $X$ at $p_1, p_2,\dots,p_r$.
    \end{lem}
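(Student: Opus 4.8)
The plan is to reduce the statement to the affine computation furnished by Corollary~\ref{ref:tangentspaceaffinely:cor} and then to separate the contributions of the $r$ individual points. First I would choose a single affine open $U = \Spec A$ of $X$ containing the finite support $\{p_1,\dots,p_r\}$; such a $U$ exists because $X$ is (quasi-)projective and a finite set of points always lies in the complement of some hypersurface not passing through any of them, whose complement in $X$ is affine. Let $I\ideal A$ be the ideal cutting out $Z$ in $U$, and let $\mathfrak{m}_i\ideal A$ be the maximal ideal of $p_i$. Since $Z$ is a disjoint union of $r$ reduced points and $k$ is algebraically closed, $A/I\simeq \prod_{i=1}^r k$, so $A/I$ is Artinian with maximal ideals $\mathfrak{m}_1/I,\dots,\mathfrak{m}_r/I$. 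By Corollary~\ref{ref:tangentspaceaffinely:cor} the tangent space in question is $\Homt{A/I}{I/I^2}{A/I}$.

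Next I would decompose this module according to the points. Because $A/I$ is a finite product of fields, the orthogonal idempotents $e_1,\dots,e_r$ (with $e_i$ supported at $p_i$) split every $A/I$-module, and since $A$ is noetherian and $I/I^2$ is finite the formation of $\Homop$ commutes with localization at each $\mathfrak{m}_i$. Hence
\[
\Homt{A/I}{I/I^2}{A/I} \simeq \bigoplus_{i=1}^r \Homt{(A/I)_{\mathfrak{m}_i}}{(I/I^2)_{\mathfrak{m}_i}}{(A/I)_{\mathfrak{m}_i}}.
\]

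Then I would identify each summand with a tangent space of $X$. In a neighbourhood of $p_i$ the subscheme $Z$ is the reduced point $p_i$, so $I A_{\mathfrak{m}_i} = \mathfrak{m}_i A_{\mathfrak{m}_i}$; consequently $(A/I)_{\mathfrak{m}_i}\simeq k$ and $(I/I^2)_{\mathfrak{m}_i} = \mathfrak{m}_i A_{\mathfrak{m}_i}/\mathfrak{m}_i^2 A_{\mathfrak{m}_i}\simeq \mathfrak{m}_i/\mathfrak{m}_i^2$, the Zariski cotangent space of $X$ at $p_i$. Therefore the $i$-th summand is $\Homt{k}{\mathfrak{m}_i/\mathfrak{m}_i^2}{k} = (\mathfrak{m}_i/\mathfrak{m}_i^2)^*$, which is by definition the tangent space $T_{p_i}X$. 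Summing over $i$ gives $\bigoplus_{i=1}^r T_{p_i}X$, whose $k$-rank is $\sum_{i=1}^r \dimk T_{p_i}X$, as claimed.

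The isomorphisms $A/I\simeq \prod k$ and $(I/I^2)_{\mathfrak{m}_i}\simeq \mathfrak{m}_i/\mathfrak{m}_i^2$ are routine. The one genuinely load-bearing step, and the main thing to get right, is the splitting of the $\Homop$-module across the $r$ points, i.e.\ that distinct $p_i$ contribute independently with no interaction. This rests on $A/I$ being a finite product of local rings (here fields), equivalently on $\Homop$ commuting with the finite localizations, which is precisely where the noetherian hypothesis and the finiteness of $I/I^2$ enter.
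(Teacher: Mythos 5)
Your proof is correct and takes essentially the same route as the paper's: reduce via Corollary~\ref{ref:tangentspaceaffinely:cor} to computing $\dimk\Homt{A/I}{I/I^2}{A/I}$ on a single affine chart containing all the $p_i$, split the Hom over the $r$ points using the fact that $I/I^2$ is supported there, and identify each local piece with $\Homt{k}{\mathfrak{m}_i/\mathfrak{m}_i^2}{k}$. The only difference is that you spell out the splitting (idempotents, compatibility of Hom with localization) which the paper leaves implicit.
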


    \begin{proof}
        \def\pm{\mathfrak{m}}
        Since $X$ is projective, we can find an affine open subset $\Spec A$
        of $X$
        containing $p_1, \dots, p_r$. By Corollary
        \ref{ref:tangentspaceaffinely:cor} it is enough to compute
        $\dimk \Homt{A/I}{I/I^2}{A/I}$
        where $I\ideal A$ is the (finitely generated) ideal of $Z$;
        in fact $I
        = \pm_1\cap \dots\cap \pm_r$ where $\pm_1,\dots,\pm_r$ are ideals
        corresponding to
        $p_1,\dots,p_r$. The module $I/I^2$ is supported at $p_1,\dots,p_r$, so
        \begin{align*}
        \dimk\Homt{A/I}{I/I^2}{A/I} &= \sum_{1\leq i\leq r}
        \dimk\Homt{(A/I)_{\pm_i}}{(I/I^2)_{\pm_i}}{(A/I)_{\pm_i}}&\\ &=\sum_{1\leq
        i\leq r} \dimk\Homt{k}{\pm_i/\pm_i^2}{k},
    \end{align*}
    since $\pp{A/I}_{\pm_i}  \simeq  A/\pm_i \simeq k$ for every $i$.\qedhere
    \end{proof}

    Now we will describe an important open subset of $\DHSchXr$.
    Let $X_i = X$ for $i=1,2,\dots,r$ and set
    \[\DD{Xtimesr}{X^r} := X_1 \times X_2 \times \dots \times
    X_r\]
    with projections $\pi_i : X^r \to X_i$ for $i=1,\dots,r$.
    Denote by $\Delta \subseteq X^r$ the union of all pullbacks of diagonals $\Delta_{ij}
    \subseteq X_i \times X_j$, where $1\leq i<j\leq r$.
    Let $X_{smooth}$ be the set of smooth points of $X$ and
    set $U = X_{smooth}^r\setminus \Delta$; this is an open set in $X^r$.
    Let $Z_i \subseteq X^{r} \times X$ be the pullback of the
    diagonal $\Delta_{i} \subseteq X_i \times X$ and take
    \[
    Z := Z_1 \cup \dots \cup Z_r,\quad Z_U = Z \times_{X^r} U.
    \]

    \begin{prop}\label{ref:flatmorphism:prop}
        The family $Z_U \to U$ is flat over $U$ and thus gives a morphism
        \[
        \varphi: U\to \DHSchXr.
        \]
    \end{prop}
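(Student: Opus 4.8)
The plan is to obtain flatness from the constant-rank criterion of Corollary~\ref{ref:lookatclosed:cor} and then invoke representability of the Hilbert functor for the final clause. First I would check the standing hypotheses of that corollary. The base $U$ is a nonempty open subscheme of $X^r$; since $X$ is a variety over the algebraically closed field $k$, the product $X^r$ is integral and of finite type over $k$, hence so is $U$. Next, $Z_U$ is a closed subscheme of $U\times_k X$ with $X$ projective, so $Z_U\to U$ is projective and $\OO{Z_U}$ is the structure sheaf of a scheme projective over $U$. It therefore remains only to verify that the rank of $\OO{Z_U}$ at each closed point of $U$ is finite and independent of the point.

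The decisive step is to understand $Z_U$ geometrically. Each $Z_i$ is the preimage of the diagonal $\Delta_i\subseteq X_i\times X$ under $\pi_i\times\id$, i.e.\ the graph of $\pi_i\colon X^r\to X$, so the projection $Z_i\to X^r$ is an isomorphism. Two such graphs meet exactly where $\pi_i=\pi_j$, so the scheme-theoretic intersection $Z_i\cap Z_j$ is the preimage of $\Delta_{ij}$ and is contained in $\Delta$. Because $U$ was obtained from $X_{smooth}^r$ precisely by removing $\Delta$, the subschemes $Z_i$ are pairwise disjoint over $U$; hence
\[
Z_U \;=\; \coprod_{i=1}^{r}\big(Z_i\times_{X^r}U\big),
\]
a disjoint union of $r$ copies of $U$, each mapping isomorphically to $U$.

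With this description the fibre computation is routine. Over a closed point $u=(p_1,\dots,p_r)\in U$ the points $p_i$ are pairwise distinct, so the scheme-theoretic fibre $(Z_U)_u$ is the reduced set $\{p_1,\dots,p_r\}$ and $\rk_{k(u)}\Gl{(Z_U)_u}=r$, which is finite and independent of $u$. Corollary~\ref{ref:lookatclosed:cor} then yields flatness of $Z_U$ over $U$; alternatively, flatness is visible directly from the displayed decomposition, since flatness is checked pointwise on the source and there each map is an isomorphism (Lemma~\ref{ref:basicflat:lem}). Finally, the flat family $Z_U\to U$ has constant fibrewise Hilbert polynomial $r$, so by representability (Theorem~\ref{ref:HilbertExistence:thm}) it is classified by a morphism $\varphi\colon U\to\DHSchXr$, proving the proposition. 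The only real subtlety — and the reason $\Delta$ must be deleted — is to guarantee that the fibre is reduced of length exactly $r$ rather than a fat or shorter point; the disjointness of the $Z_i$ over $U$ is exactly what secures this, and the smoothness condition defining $U$ is not itself needed for flatness.
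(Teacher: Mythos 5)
Your proof is correct and follows essentially the same route as the paper's: both verify that $U$ is integral (being open in the variety $X^r$), that $Z_U\to U$ is projective with constant fibre rank $r$ over closed points, and then conclude via Corollary~\ref{ref:lookatclosed:cor} and representability. Your explicit disjoint-union decomposition of $Z_U$ just spells out what the paper asserts directly when it identifies the fibre over $(x_1,\dots,x_r)$ with $\OO{\{x_1\}\cup\dots\cup\{x_r\}}$.
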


    \begin{proof}
        Since $Z\to X$ is projective and $Z_U\to U$ is its pullback, so it
        is a projective morphism. The
        fiber over a closed point $(x_1, x_2, \dots, x_r)\in U$ is just
        $\OO{\{x_1\}\cup\dots\cup\{x_r\}}$, so the morphism is quasi-finite
        with fibers of rank $r$,
        thus finite by \cite[Cor 12.89]{WedhornAG} 
        and $\OO{{Z_U}}$ is a coherent sheaf over $U$.
        Since $U$ is open in the variety $\DXtimesr$ it is integral, so from
        Corollary~\ref{ref:lookatclosed:cor} it follows that $Z_U \to U$ is
        flat.
    \end{proof}

    \begin{prop}
        The morphism $\varphi: U\to \DHSchXr$ defined in Proposition
        \ref{ref:flatmorphism:prop} is flat, in
        particular open. The
        image of $\varphi$ has dimension $r\cdot \dim X$.
    \end{prop}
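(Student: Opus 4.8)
The plan is to exploit that $\varphi$ is quasi-finite and that every point of its image is a smooth point of $\DHSchXr$ of the expected dimension; flatness will then drop out of a dimension count (``miracle flatness''), and openness from flatness together with finite type.

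First I would describe the fibres of $\varphi$ set-theoretically. A closed point $(x_1,\dots,x_r)\in U$ is sent to the class $[\{x_1,\dots,x_r\}]$ of the reduced subscheme supported at the $r$ pairwise distinct points $x_i$ (distinct because we removed $\Delta$). Two tuples have the same image exactly when they define the same subset of $X$, i.e.\ differ by a permutation, so each fibre of $\varphi$ is a single orbit of the permutation action of $S_r$, of cardinality $r!$; in particular $\varphi$ is quasi-finite. Since $X$ is a variety, $X^r$ is irreducible of dimension $r\cdot\dim X$, hence so is its non-empty open subset $U$; as $\varphi$ has finite fibres, $\dim\overline{\varphi(U)} = \dim U = r\cdot\dim X$, which already settles the statement about the dimension of the image.

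Next I would prove that every point of the image is a smooth point of $\DHSchXr$ of dimension $r\cdot\dim X$. Fix $[Z]=\varphi(x_1,\dots,x_r)$, a closed point of $\DHSchXr$. By Lemma~\ref{ref:tangentspaceatsmooth:lem} the tangent space to $\DHSchXr$ at $[Z]$ has $k$-rank $\sum_i \dimk T_{x_i}X$, and since each $x_i\in X_{smooth}$ this equals $r\cdot\dim X$. On the other hand $[Z]$ lies on the $r\cdot\dim X$-dimensional irreducible closed set $\overline{\varphi(U)}$, so $\dim_{[Z]}\DHSchXr\geq r\cdot\dim X$. Combining with the general inequality $\dim_{[Z]}\DHSchXr\leq \dimk T_{[Z]}\DHSchXr$ forces both quantities to equal $r\cdot\dim X$, whence $\DHSchXr$ is regular at $[Z]$ of dimension $r\cdot\dim X$.

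Finally I would deduce flatness. At any closed point $x\in U$ the ring $\OO{U,x}$ is regular (hence Cohen--Macaulay) of dimension $r\cdot\dim X$, the ring $\OO{\DHSchXr,\varphi(x)}$ is regular of dimension $r\cdot\dim X$, and the fibre of $\varphi$ through $x$ is zero-dimensional; since $r\cdot\dim X = r\cdot\dim X + 0$, miracle flatness (see e.g.~\cite[Cor.~18.17]{EisView}) gives flatness of $\varphi$ at $x$. The flat locus is open, and since it contains all closed points of $U$ it is all of $U$ (as in Corollary~\ref{ref:lookatclosed:cor}); thus $\varphi$ is flat, and being of finite type it is open. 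The main obstacle is the middle step: despite the \emph{a priori} pathological local structure of the Hilbert scheme, one must show that the points of $\varphi(U)$ are genuinely smooth of the predicted dimension, and this is precisely where the tangent space computation of Lemma~\ref{ref:tangentspaceatsmooth:lem} is indispensable, pinning the tangent space down to the lower bound coming from the image.
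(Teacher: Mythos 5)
Your proof is correct and follows essentially the same route as the paper's: finite fibres give the lower bound $\dim\overline{\im\varphi}\geq\dim U$, Lemma~\ref{ref:tangentspaceatsmooth:lem} gives the matching tangent-space upper bound $r\cdot\dim X$ at $\varphi(u)$, the resulting squeeze shows the Hilbert scheme is smooth of dimension $r\cdot\dim X$ along the image, and miracle flatness (Eisenbud, Thm~18.16) concludes. The extra details you supply (the $S_r$-orbit description of the fibres and the explicit openness-of-the-flat-locus step) are harmless elaborations of what the paper leaves implicit.
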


    \begin{proof}
        Denote by $\DD{HSchXrzero}{\overline{\im \varphi}}$ the
        scheme-theoretic image of $\varphi$.
        The morphism $\varphi$ has finite fibers over closed points, thus 
        the dimension of $\overline{\im \varphi}$ is not less than $\dim U$,
        by \cite[Thm 10.10]{EisView}.
        Since $\DHSchXrzero$ is projective and
        irreducible, its
        dimension is equal to the
        dimension of the local ring at any point of $\DHSchXrzero$.
        Take $u\in U$. By Lemma~\ref{ref:tangentspaceatsmooth:lem} we see that the tangent
        spaces at $u$ and $\varphi(u)$ have the same $k$-rank. In
        particular
        \begin{multline*}
            \dim U \leq \dim \DHSchXrzero = \dim \OO{\DHSchXrzero,
            \varphi(u)} \leq \dim\DD{OOHu}{\OO{\DHSchXr, \varphi(u)}} \leq \dimk \TT{\DHSchXr, \varphi(u)} =
            \dimk \TT{U, u} = \dim U,
        \end{multline*}
        as $U$ is smooth. This shows that $\DHSchXr$ is smooth at
        $\varphi(u)$. Now the result follows as a special case of \cite[Thm
        18.16]{EisView}.
    \end{proof}

    Now we define, for a projective variety $X$, the smoothable
    component.
    \begin{defn}\label{ref:defsmoothable:def}
        Let $\DD{HSchXrzero}{{\DHSchXr}^\circ}$ be the irreducible component of the Hilbert scheme
        containing the image
        of the morphism $\varphi: U \to \DHSchXr$ defined in
        Proposition~\ref{ref:flatmorphism:prop}. We call $\DHSchXrzero$ the
        \emph{smoothable component} of the Hilbert scheme $\DHSchXr$. It has dimension $r\cdot \dim X$.
    \end{defn}

    \subsection{Smoothability}

    Let $X$ be as in the previous subsection.
    One of the central questions of the theory of Hilbert schemes of points is
    ``for which $r$ it is true that $\DHSchXr = \DHSchXrzero$?'' and its
    refinements. To give some down-to-earth conditions equivalent to
    this equality, we introduce the notion of smoothing.
    \begin{defn}
        A (flat) \emph{deformation} is a flat family $Y\to B$, where
        $B$ is irreducible; we call $B$ the base of deformation.

        Let $R$ be a closed subscheme of $X$. A
        \emph{smoothing of $R$ in $X$} is a closed subscheme $Y \subseteq B
        \times X$ such that the morphism $Y\to B$ is proper and a flat
        deformation with fiber $Y_b = \{b\}\times R$ at some closed point $b\in B$ and
        $Y_{\eta}$ smooth at the generic point $\eta\in B$.
        We say that \emph{$R$ is smoothable in $X$} if there exists a smoothing
        of $R$ in $X$.
    \end{defn}

    Intuitively the above definition says: to deduce that $[R]\in \DHSchXrzero$
    we find a morphism $B\to \DHSchXr$ such the generic point of $B$ maps into
    the interior of $\DHSchXrzero$ and a special point maps to $[R]$. The
    condition of being proper is technical, connected with the fact that
    the Hilbert scheme is defined only for projective $X$, but important --- otherwise it
    would be easy to~e.g.~deform two points to only one point or one point to an empty scheme, see Example~\ref{ref:hyperbola:exa}.

    \begin{remark}\label{ref:punctualimportant:remark}
        The existence of
        smoothings $Y_1\to B_1, \dots, Y_n\to B_n$ of connected
        components of $R$ implies the existence of a smoothing of $R$ over
        $B_1\times \dots \times B_n$. This leads
        one to the conclusion that when
        considering smoothings of finite
        $R$ it is enough to consider those $R$ which are supported at a single point.
    \end{remark}

    The following proposition together with
    Proposition~\ref{ref:smoothabilityeverywhere:prop} shows a connection
    between smoothability and the smoothable component:
    \begin{prop}\label{ref:smoothabilityvssmoothings:prop}
        Let $X = \DD{Pkn}{\mathbb{P}_k^n}$ and $R\subseteq X$ be
        a zero-dimensional closed subscheme of degree $r$. The following
        conditions are equivalent
        \begin{enumerate}
            \item $[R]\in \DHSchXrzero$;
            \item there exists a smoothing of $R$ in $X$;
            \item there exists a smoothing of $R$ in $X$ over a base $D =
                \DD{Disk}{\Spec k\formal{x}}$.
        \end{enumerate}
    \end{prop}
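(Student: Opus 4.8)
The plan is to prove the cycle $(3)\Rightarrow(2)\Rightarrow(1)\Rightarrow(3)$. The implication $(3)\Rightarrow(2)$ is immediate, since the base $D=\Spec k\formal{x}$ is the spectrum of a local domain, hence irreducible, so a smoothing over $D$ is in particular a smoothing. The whole argument rests on the universal family $\mc{Z}\subseteq X\times \DHSchXr$, flat over $\DHSchXr$, provided by representability (Theorem~\ref{ref:HilbertExistence:thm}), and on the fact established around Proposition~\ref{ref:flatmorphism:prop} that the closed points of $\im\varphi$ are exactly the reduced length-$r$ subschemes of $X$ (recall that $X=\mathbb{P}^n_k$ is everywhere smooth, so $X_{smooth}=X$).

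For $(2)\Rightarrow(1)$ I would first observe that the locus $\mc{H}^{sm}\subseteq\DHSchXr$ of points $[W]$ whose fibre is smooth over its residue field is open, since smoothness of fibres is an open condition on the base of a flat proper family. Every closed point of $\mc{H}^{sm}$ has residue field $k$, so it corresponds to a reduced degree-$r$ subscheme of $X$, i.e.\ to a point of $\im\varphi$, and hence lies in the smoothable component $\DHSchXrzero$. As closed points are dense and $\DHSchXrzero$ is closed, it follows that $\mc{H}^{sm}\subseteq\DHSchXrzero$. Now a smoothing $Y\subseteq B\times X$ yields, by flatness and the universal property of $\DHSchXr$, a morphism $\psi\colon B\to\DHSchXr$ with $\psi(b)=[R]$, and its generic fibre $Y_\eta$ is smooth, so $\psi(\eta)\in\mc{H}^{sm}\subseteq\DHSchXrzero$. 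Since $B$ is irreducible we get $\overline{\psi(B)}=\overline{\{\psi(\eta)\}}\subseteq\DHSchXrzero$, whence $[R]=\psi(b)\in\DHSchXrzero$.

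The substance of the statement is $(1)\Rightarrow(3)$, which I would reduce to the geometric lemma: \emph{if $V$ is an integral scheme of finite type over an algebraically closed field, $p\in V$ a closed point and $U_0\subseteq V$ a dense open subset, then there is a morphism $\gamma\colon\Spec k\formal{x}\to V$ sending the closed point to $p$ and the generic point into $U_0$.} Granting this, I apply it to $V=(\DHSchXrzero)_{red}$, the point $p=[R]$ (which lies in $V$ by $(1)$), and the dense open $U_0=\mc{H}^{sm}\cap V$, which is dense because $\im\varphi$ is dense in $\DHSchXrzero$ (its closure is an irreducible closed subset of the full dimension $r\cdot\dim X$ of the component). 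Pulling the universal family back along $\gamma$ produces $Y:=\mc{Z}\times_{\DHSchXr}\Spec k\formal{x}\subseteq X\times\Spec k\formal{x}$, which is flat over $\Spec k\formal{x}$ by base change, proper since it is a closed subscheme of $X\times\Spec k\formal{x}$, has special fibre $R$ because $\gamma$ sends the closed point to $[R]$, and has smooth generic fibre because $\gamma$ sends the generic point into $\mc{H}^{sm}$. This is precisely a smoothing of $R$ over $D=\Spec k\formal{x}$.

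The main obstacle is the lemma, and within it the production of a suitable curve. I would first find an integral curve $C\subseteq V$ with $p\in C$ and $C\cap U_0\neq\emptyset$, obtained by cutting $V$ with general hyperplanes through $p$ (a Bertini-type argument), lowering the dimension to one while preserving irreducibility and keeping the curve from sinking into the closed set $V\setminus U_0$. I then normalize, $\nu\colon\tilde{C}\to C$, pick a point $\tilde{p}\in\nu^{-1}(p)$ on the smooth curve $\tilde{C}$, and use that $\widehat{\OO{\tilde{C},\tilde{p}}}\simeq k\formal{x}$ since $k$ is algebraically closed; the composite $\Spec k\formal{x}\to\tilde{C}\xrightarrow{\nu}C\hookrightarrow V$ is the desired $\gamma$, its generic point mapping to the generic point of $C$, which lies in $U_0$ because $C$ is irreducible and meets $U_0$.
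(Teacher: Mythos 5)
The paper does not actually prove this proposition: its ``proof'' is the single line ``This is \cite[Lem 4.1]{CEVV}.'' Your argument is therefore a genuinely different route, namely a self-contained reconstruction, and it follows the expected lines: $(3)\Rightarrow(2)$ is formal; for $(2)\Rightarrow(1)$ you use openness of the locus $\mc{H}^{sm}$ of smooth fibres of the universal family together with density of closed points (valid since the Hilbert scheme is of finite type over $k$, hence Jacobson) to conclude $\mc{H}^{sm}\subseteq \left(\Hilb_{X/k}^{r}\right)^{\circ}$, and then push the irreducible base $B$ through the classifying morphism; for $(1)\Rightarrow(3)$ you pull back the universal family along a map $\gamma:\Spec k\formal{x}\to \left(\Hilb_{X/k}^{r}\right)^{\circ}_{red}$ hitting $[R]$ at the closed point and the dense open $\mc{H}^{sm}\cap\left(\Hilb_{X/k}^{r}\right)^{\circ}$ at the generic point. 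All of this is sound, including the descent of smoothness of $Y_\eta$ over $k(\eta)$ to the image point $\psi(\eta)$, and the identification of closed points of $\mc{H}^{sm}$ with points of $\im\varphi$, which correctly uses that $k$ is algebraically closed throughout this subsection. What the citation buys the paper is brevity; what your version buys is that the reader sees exactly where representability, properness and the Jacobson property enter.

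The one step that does not work as written is the production of the curve: general hyperplane sections \emph{through a fixed point} $p$ of an integral variety need not be irreducible. If $V$ is (locally) a cone with vertex $p$, every hyperplane through $p$ meets $V$ in a union of rulings, so iterating the cut can destroy irreducibility no matter how generally the hyperplanes are chosen. The lemma you need --- an integral curve in $V$ through $p$ meeting a prescribed dense open $U_0$ --- is true and standard, but its proof requires either general hypersurfaces of degree at least two through $p$ (Bertini with assigned base conditions, in the style of Altman--Kleiman), or blowing up $p$ and taking base-point-free hyperplane sections upstairs, or bypassing curves entirely via the valuative criterion style result that any specialization from the generic point of $V$ to $p$ is realized by a discrete valuation ring, whose completion is $k\formal{x}$ by Cohen's structure theorem once the residue field is arranged to be $k$. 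With any of these repairs, the normalization-and-completion step you describe goes through and the proof is complete.
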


    \begin{proof}
        This is \cite[Lem 4.1]{CEVV}.
    \end{proof}

    It is natural to ask what is the class of schemes $Z$ such that $R$ is smoothable
    in $Z$. The following lemma shows that this class is in some sense
    directed; this
    answers \cite[Question 2.12]{bgl2010}.
    \begin{lem}
        Let $\varphi:Y_1\to Y_2$ be an affine morphism of schemes and suppose that
        $R\subseteq Y_1$ is a zero-dimensional subscheme of $Y_1$ such that $\im \varphi(R)  \simeq R$. If $R$ is
        smoothable in $Y_1$, then it is smoothable in $Y_2$.
    \end{lem}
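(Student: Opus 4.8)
The plan is to push the given smoothing forward along $\varphi$ and to check that, after confining it to a suitable affine chart, the resulting map into $Y_2$ is a closed immersion. The isomorphism $\varphi|_R$ will be used to guarantee surjectivity of the relevant pushforward map via Nakayama's lemma.

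First I would reduce to the case where $R$ is supported at a single point $p$. Since $\varphi|_R$ is an isomorphism onto $\varphi(R)$, the connected components of $R$ correspond to disjoint components of $\varphi(R)$, each again mapped isomorphically; by Remark~\ref{ref:punctualimportant:remark} it suffices to smooth each component separately, so I may assume $\operatorname{Supp} R = \{p\}$. I would then pick an affine open $V = \Spec A_2 \subseteq Y_2$ containing $q := \varphi(p)$; because $\varphi$ is affine, $U := \varphi^{-1}(V) = \Spec A_1$ is affine, giving a ring map $A_2\to A_1$. If $I_1\ideal A_1$ cuts out $R$, then the hypothesis $\varphi(R)\simeq R$ says exactly that the composite $A_2\to A_1 \onto A_1/I_1 = \mc O_R$ is surjective.

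Next I would take a smoothing $\mc Y\subseteq B\times Y_1$ of $R$, with special fiber $R$ over $b\in B$ and smooth generic fiber. As $\mc Y\to B$ is proper with finite (degree $r$) fibers it is finite. Replacing $B$ by the local scheme $B' = \Spec \mc O_{B,b}$ and $\mc Y$ by its base change $\mc Y'$ preserves all properties of a smoothing (flatness, properness, both fibers), so I may assume the base is local. Then $\mc Y' = \Spec S$ is finite over $\mc O_{B'}$ with local special fiber $\mc Y'_b = R$, hence $S$ is a local ring. The key topological point is that the only open subscheme of $\Spec S$ containing its closed point is $\Spec S$ itself; since that closed point maps to $p\in U$, the whole family maps into $U$. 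Thus $\mc Y'$ is a closed subscheme of $B'\times U = \Spec(\mc O_{B'}\tensor_k A_1)$, i.e. $\mc O_{B'}\tensor_k A_1 \onto S$.

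Finally I would push forward by composing with $A_2\to A_1$ to get $\mc O_{B'}\tensor_k A_2 \to S$. Modulo $\mathfrak m_{B'}S$ this becomes the map $A_2\to \mc O_R$, surjective by the first step; since $S$ is finite over the local ring $\mc O_{B'}$ and $\mathfrak m_{B'}S$ lies in the maximal ideal of $S$, Nakayama's lemma upgrades this to surjectivity $\mc O_{B'}\tensor_k A_2\onto S$. Hence $\mc Y'$ embeds into $B'\times V$, and being finite over $B'$ it is closed in $B'\times Y_2$; call the image $\mc Z'\cong \mc Y'$. The $B'$-isomorphism $\mc Y'\cong \mc Z'$ transports flatness, properness and the abstract fibers, so $\mc Z'\to B'$ is proper and flat, its generic fiber is smooth, and its special fiber is $\varphi(R)\simeq R$; thus $\mc Z'$ is a smoothing of $R$ in $Y_2$. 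The heart of the argument, and the step I expect to be most delicate, is the surjectivity $\mc O_{B'}\tensor_k A_2\onto S$: it is precisely here that the isomorphism $\varphi|_R$ enters, and the two reductions (to a punctual $R$ and to a local base) are exactly what make $S$ local so that Nakayama applies. Without confining the family to $U$ one could not push it forward as a closed subscheme, and without the isomorphism on the special fiber $\varphi$ might identify points of the generic fiber and destroy flatness.
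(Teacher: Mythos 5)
Your proof is correct and follows essentially the same route as the paper's: push the family forward along $\varphi\times\id$, deduce finiteness (hence affineness) of the family from properness plus finiteness of the special fiber, and use Nakayama's lemma, keyed on the isomorphism $R\simeq\im\varphi(R)$ of special fibers, to see that the pushforward is a closed subscheme isomorphic to the original family over the base. The paper works directly over $D=\Spec k\formal{x}$ and phrases the Nakayama step as showing $Z\to Z'$ is an isomorphism onto the scheme-theoretic image, whereas you localize the base and confine everything to affine charts, but the substance is identical.
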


    \begin{proof}
        Let $D = \DDisk$ and suppose that $Z \subseteq Y_1 \times D$ is a
        smoothing of $R$ in $Y_1$. We claim that $Z' := \im (\varphi \times
        \id)(Z) \subseteq Y_2 \times D$ is a smoothing of $\im \varphi(R)$ in
        $Y_2$. It would suffice to show that $\varphi' = \varphi \times \id_D$ gives an isomorphism
        of $Z$ and $Z'$ as schemes over $D$. Let $\DD{Dclosed}{t}\in D$
        denote the closed point of $D$.
        Since $R = Z_{\DDclosed}, \im \varphi(R) = Z'_{\DDclosed}$ are finite and $Z\to D$ is proper, it follows
        that $Z$ is finite over $D$, then $Z'$ is also finite over $D$. In
        particular $Z, Z'$ are affine. Now $Z_{\DDclosed}  \simeq
        Z'_{\DDclosed}$ and
        $Z$ is flat over $D$, so the result follows from Nakayama's
        lemma (see \cite[Cor 4.8]{EisView}).
    \end{proof}

    At the end, we note the following important result from \cite{bubu2010},
    which uses and generalizes \cite[Lem 2.2]{CN09}:
    \begin{prop}\label{ref:smoothabilityeverywhere:prop}
        Suppose that $X$ and $Y$ are two projective varieties over an
        algebraically closed field of characteristic zero. Let $R$ be a
        zero-dimensional scheme. Fix closed embeddings $R\subseteq X$ and
        $R\subseteq Y$. If $R$ is smoothable in $Y$ and $R \subseteq X$ is
        supported in the smooth locus of $X$, then $R$ is smoothable in $X$.
    \end{prop}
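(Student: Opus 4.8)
I would prove the proposition by reducing it to an intrinsic statement about the finite scheme $R$ and then transporting a smoothing into $X$ through affine models. By Remark~\ref{ref:punctualimportant:remark} it suffices to treat an $R$ supported at a single point; write $p\in X$, $q\in Y$ for its support, let $(A,\mathfrak{m}_R)$ be the finite local $k$-algebra with $R=\Spec A$, and set $e=\dimk \mathfrak{m}_R/\mathfrak{m}_R^2$. Since $R\subseteq X$ sits at the smooth point $p$, the cotangent map $\mathfrak{m}_{X,p}/\mathfrak{m}_{X,p}^2\onto \mathfrak{m}_R/\mathfrak{m}_R^2$ is surjective, so $e\le n:=\dim X$; this inequality is what makes room for a smoothing inside $X$. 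At $p$ I would fix a regular system of parameters $x_1,\dots,x_n$ with $x_1,\dots,x_e$ restricting to a basis of $\mathfrak{m}_R/\mathfrak{m}_R^2$, and, after subtracting from each $x_i$ ($i>e$) a polynomial in $x_1,\dots,x_e$ that agrees with it on the finite scheme $R$, I may assume $x_{e+1},\dots,x_n$ vanish on $R$ while the $dx_j$ still span $T_p^*X$. These parameters give an étale chart $\pi\colon V\to\mathbb{A}^n$ on an affine neighbourhood $V\ni p$ under which $R$ maps isomorphically into $\mathbb{A}^e\times\{0\}$; write $\rho_X\colon R\into\mathbb{A}^e$ for the resulting minimal embedding. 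The plan is to produce a smoothing of $\rho_X(R)$ in $\mathbb{A}^e$, extend it trivially to $\mathbb{A}^n$, and pull it back along $\pi$.

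First I would push the given smoothing in $Y$ down to the minimal model $\mathbb{A}^e$. Arguing as in Proposition~\ref{ref:smoothabilityvssmoothings:prop} (this is where the characteristic-zero hypothesis enters, through generic smoothness, to keep the generic fibre smooth after restricting the base), I may assume the smoothing of $R$ in $Y$ has base $D=\Spec k\formal{x}$. Since this family is proper over $D$ with finite fibres it is finite over $D$, hence affine, and lies inside $\Spec B\times D$ for some affine chart $\Spec B\ni q$; in particular $R$ is smoothable in $\Spec B$, the family being proper because it is finite. Choosing $g_1,\dots,g_e\in B$ whose restrictions form a basis of $\mathfrak{m}_R/\mathfrak{m}_R^2$ gives an affine morphism $\Spec B\to\mathbb{A}^e$ which, by Nakayama, restricts to a closed immersion on $R$. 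The affine-morphism lemma above then yields that $R$ is smoothable in $\mathbb{A}^e$ for the minimal embedding $\rho_Y\colon R\into\mathbb{A}^e$ determined by the $g_i$.

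Next I would show that smoothability in $\mathbb{A}^e$ is independent of the minimal embedding, comparing $\rho_Y$ and $\rho_X$ by a graph construction so as to use only the affine-morphism lemma. Lifting the coordinate functions of $\rho_X$ to polynomials $\psi=(\psi_1,\dots,\psi_e)$ through the surjection $k[y_1,\dots,y_e]\onto A$, the section $s\colon\mathbb{A}^e\to\mathbb{A}^{2e}$, $y\mapsto(y,\psi(y))$, is a closed immersion carrying $\rho_Y(R)$ onto the graph $\Gamma\cong R$, while the two projections $\mathbb{A}^{2e}\to\mathbb{A}^e$ restrict to isomorphisms on $\Gamma$ with images $\rho_Y(R)$ and $\rho_X(R)$. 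Applying the affine-morphism lemma to $s$ gives that $\Gamma$ is smoothable in $\mathbb{A}^{2e}$, and applying it to the second projection gives that $\rho_X(R)$ is smoothable in $\mathbb{A}^e$. Extending the resulting family by the zero section in the last $n-e$ coordinates produces a smoothing $W\subseteq\mathbb{A}^n\times D$ of $\rho_X(R)\subseteq\mathbb{A}^e\times\{0\}$. Finally I would pull $W$ back along the étale $\pi$: the base change $W\times_{\mathbb{A}^n}V$ is flat over $D$ with smooth generic fibre, and since $D$ is henselian the quasi-finite $W\times_{\mathbb{A}^n}V\to D$ has an open-and-closed summand, finite over $D$, whose closed fibre is the copy of $R$ at $p$. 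Being finite it is proper over $D$, so it is a smoothing of the given $R\subseteq V\subseteq X$.

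The main obstacle is the transport \emph{into} $X$: the affine-morphism lemma only pushes smoothability outward to a target, so the given embedding $\rho_X$ must be reached by the independent-of-embedding argument of the third paragraph and then lifted through the étale chart, which is why I reduce to the henselian base $D$ and extract a finite summand. The remaining ingredients—reduction to a single point, finiteness of the smoothing over $D$, the graph trick, and the stability of flatness and smoothness under étale base change—are formal; the one genuinely delicate point, and the place where characteristic zero is used exactly as in \cite{bubu2010}, is the reduction of the base of the $Y$-smoothing to $D$ while preserving smoothness of the generic fibre.
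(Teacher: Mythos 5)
Your proposal is correct, but note that the paper does not actually prove this proposition: its ``proof'' is the citation \cite[Prop 2.1]{bubu2010}, so what you have written is a genuine self-contained argument rather than a variant of an internal one. Your route is assembled almost entirely from the paper's own toolkit: Remark~\ref{ref:punctualimportant:remark} to reduce to $R$ local, the argument of Proposition~\ref{ref:smoothabilityvssmoothings:prop} to put the base in the form $D=\Spec k\formal{x}$ (that argument works for any projective $Y$, not only $\mathbb{P}^n$), and three applications of the unnumbered affine-morphism lemma --- once to push the $Y$-smoothing, which is finite over $D$ and hence contained in an affine chart, down to the minimal model $\mathbb{A}^e$, and twice in the graph trick to show that smoothability in $\mathbb{A}^e$ is independent of the chosen minimal embedding. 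The only external ingredients are standard: a regular system of parameters at a smooth point of a variety over an algebraically closed field gives an \'etale chart to $\mathbb{A}^n$, and a separated quasi-finite scheme over the henselian $D$ splits off an open-and-closed finite part indexed by the points of its closed fibre. The delicate steps all check out: the correction of $x_{e+1},\dots,x_n$ so that they vanish on $R$ while the $n$-tuple remains a regular system of parameters (possible because $x_1,\dots,x_e$ already generate $A$); the identification of $R$ with the connected component of $\pi^{-1}(\pi(R))$ at $p$ (an \'etale cover of an artinian local scheme with residue field $k$ is a disjoint union of copies of it); and the fact that the finite summand of the pulled-back family, being proper over $D$, is closed in $X\times D$ and not merely in $V\times D$. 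Compared with \cite{bubu2010}, which transports the smoothing into $X$ by lifting coordinate functions via formal smoothness at $p$, your \'etale-chart-plus-henselian-splitting mechanism is arguably cleaner. One inaccuracy worth fixing: your parenthetical locating the use of characteristic zero in ``generic smoothness'' during the reduction of the base to $D$ is off --- that step only needs that the locus of smooth fibres is open in the base and contains the generic point, which holds in any characteristic; indeed nothing in your argument actually uses characteristic zero, which is consistent with the proposition's later extension to arbitrary characteristic. A second, cosmetic point: when recombining the componentwise smoothings in $X\times D$ one should observe that they are automatically disjoint, since each is finite over $D$ and all of its points specialise into the corresponding support point; this is implicit in Remark~\ref{ref:punctualimportant:remark} but deserves a sentence.
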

    \begin{proof}
        See \cite[Prop 2.1]{bubu2010}.
    \end{proof}

    It is important to know that there are successful attempts of proving
    smoothability of zero-dimensional schemes by parametrising the Hilbert
    scheme and using computer algebra systems, see \cite{bertone2012division}.

    \section{Gorenstein schemes and algebras}\label{sec:Gorenstein}

    This large section is devoted to proving various properties of Gorenstein
    schemes, needed for the later considerations on the points of the Hilbert
    scheme.

    In this section we will consider only zero-dimensional
    noetherian schemes. They are affine, so we will talk of
    rings $A$ rather than schemes $\Spec A$.
%    In the introduction we mainly follow
%    \cite[Chapter 21]{EisView}.
    Let us first state the preliminaries needed
    for the definition of a zero-dimensional Gorenstein module.
    We mainly follow \cite[Chapter 21]{EisView}.

    If $A$ is a ring, $M$ is an $A$-module and $I\ideal A$ then we denote
    \[\ann{M} := \left\{ a\in A\ |\ aM = 0 \right\},\qquad \annn{M}{I} := \left( m\in M\
    |\ Im = 0 \right).\]

    \begin{remark}
        Finitely generated modules over a zero-dimensional noetherian ring are
        artinian.
    \end{remark}

    \begin{defn}\label{ref:socledef:def}
        Let $(A,\DD{mm}{\mathfrak{m}}, k)$ be a zero-dimensional local ring and $M$ be a
        finitely generated $A$-module. Define \emph{the socle} of $M$ to be
        the annihilator of $\Dmm$. We denote this submodule by
        $\DD{socM}{\operatorname{soc}(M)} = \annn{M}{\Dmm}$.
    \end{defn}

     Recall that
    a~submodule $N \subseteq M$ is an \emph{essential submodule of $M$} if
    $N\cap M'\neq 0$ for any submodule $0\neq M'\subseteq M$.
    The following lemma justifies the use of the word ``socle''; it is a
    standard exercise in the theory of modules, see~e.g.~\cite[2.4 Ex.
    18]{LamFC}.
    \begin{lem}\label{ref:socleissocle:lem}
        Under the assumptions of Definition~\ref{ref:socledef:def}, the socle
        of $M$ is the smallest, with respect to inclusion, essential submodule
        of $M$.
    \end{lem}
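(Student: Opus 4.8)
The plan is to exploit that $A$ is Artinian (a zero-dimensional noetherian ring), so its maximal ideal $\Dmm$ is nilpotent, say $\Dmm^N = 0$ for some $N$. With this in hand I would establish the two halves of the claim separately: that $\operatorname{soc}(M)$ is an essential submodule, and that it sits inside every essential submodule. Together these say exactly that it is the smallest essential submodule.

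To see essentiality, I would fix an arbitrary nonzero submodule $M' \subseteq M$ and consider the descending chain $M' \supseteq \Dmm M' \supseteq \Dmm^2 M' \supseteq \dots \supseteq \Dmm^N M' = 0$. Since $M' \neq 0$ there is a largest index $j$ with $\Dmm^j M' \neq 0$. Then $\Dmm^j M'$ is a nonzero submodule of $M'$ annihilated by $\Dmm$, because $\Dmm \cdot \Dmm^j M' = \Dmm^{j+1} M' = 0$; hence $\Dmm^j M' \subseteq \operatorname{soc}(M)$ and so $M' \cap \operatorname{soc}(M) \neq 0$. As $M'$ was arbitrary, $\operatorname{soc}(M)$ meets every nonzero submodule nontrivially, i.e.\ it is essential.

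For minimality, let $N$ be any essential submodule and take a nonzero $s \in \operatorname{soc}(M)$. Since $\Dmm s = 0$ and $s \neq 0$, the annihilator of $s$ is a proper ideal containing $\Dmm$, hence equal to $\Dmm$, so the cyclic module $As$ is isomorphic to $A/\Dmm = k$ and is therefore simple. Essentiality of $N$ gives $N \cap As \neq 0$, and simplicity of $As$ then forces $N \cap As = As$, whence $s \in N$. This proves $\operatorname{soc}(M) \subseteq N$ for every essential $N$, and combined with the previous paragraph yields the lemma. The only genuinely load-bearing input is the nilpotence of $\Dmm$, which is precisely where the zero-dimensional (Artinian) hypothesis enters; the rest is formal module theory, and finite generation of $M$ is not strictly needed beyond fixing the setting of Definition~\ref{ref:socledef:def}.
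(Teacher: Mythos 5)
Your proof is correct and follows essentially the same route as the paper's: essentiality via the last nonzero term of the chain $M' \supseteq \Dmm M' \supseteq \Dmm^2 M' \supseteq \dots$, and minimality via the simplicity of $As$ for $s$ in the socle. The only (harmless) divergence is in why the chain reaches zero: you invoke nilpotence of $\Dmm$ in the Artinian local ring $A$, whereas the paper lets the chain stabilize and applies Nakayama's lemma --- your variant has the small advantage of not using finite generation of $M$, as you note.
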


    \begin{proof}
        First, let us show that $\DsocM$ is essential. Take any nonzero $M'
        \subseteq M$. The sequence $M' \supseteq \Dmm M'\supseteq \Dmm^2 M'
        \supseteq \dots$ is eventually constant: there exists $n$ such that
        $\Dmm^n M' = \Dmm^{n+1} M'$ and by Nakayama's lemma (see
        \cite[Cor 4.8]{EisView}) we get $\Dmm^n M' = 0$.
        Let $l\geq 0$ be the maximal natural number such that $\Dmm^l M' \neq
        0$, then $\Dmm^l M' \subseteq M'\cap \DsocM$.

        Now, if $m\in \DsocM$, then $Am$ is a simple module, so any essential
        submodule of $M$ contains it.
    \end{proof}

        The proof of Lemma~\ref{ref:socleissocle:lem} motivates the following
        definition:
    \begin{defn}\label{ref:Gorensteinsocledegree:def}
        Let $(A,\Dmm, k)$ be a zero-dimensional local ring and $M$ be a
        finitely generated $A$-module. We say that $M$ is \emph{Gorenstein}
        if and only if $\dimk\DsocM = 1$.
        If $M$ is Gorenstein then the \emph{socle degree of $M$} is the
        maximal $l$ such that $\Dmm^{l} M \neq 0$.
    \end{defn}

    \begin{remark}
        Since $\DsocM  \simeq \Homt{A}{k}{M}$ and any zero-dimensional
        local ring is (trivially) Cohen-Macaulay this definition agrees with the
        usual definition of Gorenstein rings.
        There is an important connection between Gorenstein rings and duality
        theory, see \cite[Section 21.3]{EisView}.
    \end{remark}

    \begin{prop}\label{ref:injectivityofGorenstein:prop}
        Let $(A,\Dmm, k)$ be a zero-dimensional local ring. Then the following
        are equivalent
        \begin{enumerate}
            \item $A$ is Gorenstein,
            \item $A$ is injective as an $A$-module.
            \item the canonical module $\omega_{A}$ is isomorphic to
                $A$ as an $A$-module.
        \end{enumerate}
    \end{prop}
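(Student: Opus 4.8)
The plan is to invoke Matlis duality, i.e.\ the structure of injective modules over the artinian local ring $A$. Write $E := E_A(k)$ for the injective hull of the residue field. The two facts I would take as the backbone are: (i) $E$ is the canonical module, $\omega_A \simeq E$, and it is the \emph{unique} indecomposable injective $A$-module, so every injective $A$-module is a direct sum of copies of $E$ (here one uses that $A$ is noetherian with $\mathfrak{m}$ as its only prime); and (ii) $\Homt{A}{-}{E}$ is a length-preserving exact duality on finite length modules, so in particular $\len E = \len \Homt{A}{A}{E} = \len A$, equivalently $\dimk E = \dimk A$. I also record that $\operatorname{soc}(E) = \Homt{A}{k}{E} \simeq k$ is one-dimensional, and that an essential extension does not change the socle (a simple submodule meeting the smaller module essentially is contained in it).

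With these in hand I would run the cycle $(1)\Rightarrow(3)\Rightarrow(2)\Rightarrow(1)$. The implication $(3)\Rightarrow(2)$ is immediate: if $\omega_A \simeq A$ then $A \simeq E$ is injective. For $(2)\Rightarrow(1)$, suppose $A$ is injective; by fact (i) it decomposes as $A \simeq E^{n}$ with $n$ finite (finiteness because $A$ is cyclic, or because $\dimk A<\infty$). Taking socles gives $\dimk \operatorname{soc}(A) = n\cdot\dimk\operatorname{soc}(E) = n$, while comparing lengths via fact (ii) gives $\len A = n\cdot\len E = n\cdot \len A$; since $A\neq 0$ this forces $n = 1$, hence $\dimk\operatorname{soc}(A)=1$, i.e.\ $A$ is Gorenstein.

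For $(1)\Rightarrow(3)$, assume $\dimk\operatorname{soc}(A)=1$ and pass to the injective hull $A \hookrightarrow E_A(A)$. Because the extension is essential it preserves the socle, so $\dimk\operatorname{soc}(E_A(A)) = 1$; writing $E_A(A)\simeq E^{m}$ and taking socles forces $m=1$, i.e.\ $E_A(A)\simeq E$. Thus $A$ embeds essentially into $E$, and by the length equality of fact (ii) we get $\len A \le \len E = \len A$, so the inclusion is an equality and $A \simeq E \simeq \omega_A$, which is (3).

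The main obstacle is really packaging the input facts (i)--(ii): the classification of injectives over the artinian local ring and the length-preserving property of Matlis duality. In the concrete setting of a finite local $k$-algebra $A$ (the case relevant to the inverse-systems material that follows) these can be made self-contained by taking $\omega_A = \Homt{k}{A}{k}$ with the contragredient action: the Hom-tensor adjunction $\Homt{A}{M}{\Homt{k}{A}{k}} \simeq \Homt{k}{M}{k}$ shows at once that $\Homt{k}{A}{k}$ is injective (since $\Homt{k}{-}{k}$ is exact over the field $k$), that $\dimk \Homt{k}{A}{k}=\dimk A$, and, taking $M=k$, that its socle is one-dimensional. These replace (i)--(ii), and the rest of the argument goes through verbatim.
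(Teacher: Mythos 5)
Your proof is correct. The paper itself does not argue this proposition at all --- it simply points to \cite[Prop 21.1, Thm 21.5]{EisView} --- so what you have written is a self-contained rendering of the content of that citation rather than a divergence from the paper's method. Your backbone facts are the standard Matlis-theoretic ones for an artinian local ring (the proposition lives in a section where all schemes are assumed zero-dimensional and noetherian, so $A$ is indeed artinian): $\omega_A = E_A(k)$ is the unique indecomposable injective, every injective is a direct sum of copies of it, and $\Homt{A}{-}{E_A(k)}$ preserves length; granting these, each of the three implications you run is airtight, including the small but necessary observations that an essential extension does not enlarge the socle and that the socle of a direct sum is the direct sum of the socles (which rules out an infinite direct sum in the injective hull of $A$ without needing to know its length in advance). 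Your closing paragraph, replacing the abstract input by the explicit module $\Homt{k}{A}{k}$ with the adjunction $\Homt{A}{M}{\Homt{k}{A}{k}} \simeq \Homt{k}{M}{k}$, is precisely the identification the paper itself invokes later, in the proof of Lemma~\ref{ref:Gorensteinisopen:lem}; the only caveat, which you already flag, is that this shortcut requires $A$ to be a finite $k$-algebra, whereas the proposition as stated (and the general Matlis argument) covers zero-dimensional local rings whose residue field need not split off.
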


    \begin{proof}
        The canonical module is defined as a consequence of \cite[Prop 21.1]{EisView}
        and the above statement is a part of \cite[Thm 21.5]{EisView}.
    \end{proof}

    For arbitrary rings being Gorenstein is defined as a stalk-local property:
    \begin{defn}
        Let $A$ be a zero-dimensional ring and $M$ be a finitely generated
        $A$-module. Then $M$ is a \emph{Gorenstein $A$-module} if and only if $M_{\mathfrak{m}}$ is a
        Gorenstein $A_{\mathfrak{m}}$-module for all maximal ideals
        $\mathfrak{m}$ of $A$.
    \end{defn}

    \subsection{Gorenstein points of the Hilbert scheme}

    Let $X$ be a projective scheme over a field $k$.

    \begin{lem}\label{ref:Gorensteinisopen:lem}
        Let $X\to Y$ be a flat, quasi-finite morphism of projective schemes.
        Then the set of points $y\in Y$ such that the fiber $X_y$ is
        Gorenstein is open in $Y$.
    \end{lem}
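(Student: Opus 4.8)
The plan is to reduce the statement to a local, fiberwise computation and then show that the Gorenstein condition is encoded by the non-vanishing of a section of a coherent sheaf, whose non-vanishing locus is open. Since $X \to Y$ is quasi-finite and flat between projective schemes, it is in fact finite (by the same reasoning as in Proposition~\ref{ref:flatmorphism:prop}, using properness), so $\pi_*\OO{X}$ is a coherent sheaf on $Y$ that is locally free because the morphism is flat and the fibers have constant rank on connected components of $Y$. The whole question is local on $Y$, so I would fix a point $y \in Y$, pass to an affine neighborhood $\Spec B \subseteq Y$, and work with the finite $B$-algebra $A = \Gamma(\pi^{-1}(\Spec B), \OO{X})$, which is a finitely generated locally free $B$-module. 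The fiber $X_y$ is then $\Spec(A \tensor_B k(y))$, and I must show that the set of $y$ for which this fiber algebra is Gorenstein is open.

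The next step is to characterize the Gorenstein property of the fiber in a way that commutes with base change. By Proposition~\ref{ref:injectivityofGorenstein:prop} and the stalk-local definition, the fiber $X_y$ is Gorenstein if and only if each local factor of $A\tensor_B k(y)$ has one-dimensional socle, equivalently if and only if $\dimk \Homt{A\tensor_B k(y)}{k(y)}{A\tensor_B k(y)}$ equals the number of points in the fiber. A cleaner uniform criterion is to use the relative canonical/dualizing module: for a finite flat morphism the relative dualizing sheaf is $\omega_{X/Y} = \Homsht{\OO{Y}}{\pi_*\OO{X}}{\OO{Y}}$, pushed back appropriately, and its formation commutes with base change because $\pi_*\OO{X}$ is locally free. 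The fiber $X_y$ is Gorenstein precisely when $(\omega_{X/Y})_y$ is a free rank-one $(\pi_*\OO{X})_y\tensor k(y)$-module, again by Proposition~\ref{ref:injectivityofGorenstein:prop}(3).

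From here the openness is a standard ``locus where a finite module is locally free of a given rank is open'' argument, but I would prefer the most elementary route that avoids invoking the theory of relative dualizing sheaves directly. The plan is to consider the coherent sheaf $\omega = \Homsht{\OO{Y}}{\pi_*\OO{X}}{\OO{Y}}$ on $Y$, which is an $A$-module (hence an $\OO{X}$-module after the finite pushforward identification) and is locally free of the same rank as $\pi_*\OO{X}$ over $Y$. Being Gorenstein at $y$ means $\omega\tensor_{\OO{Y}} k(y)$ is a cyclic module over $A\tensor_B k(y)$. Cyclicity of a finite module is an open condition: the set where a fixed coherent sheaf can be generated by a single element, equivalently where the minimal number of generators of the fiber is $\leq 1$, is open by Nakayama together with semicontinuity of fiber rank of $\omega/(\text{image of a chosen section})$. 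Concretely, near a point where $\omega$ is cyclic I can choose a local generator $\sigma$, and the condition that $\sigma$ generates the fiber is the vanishing of the coherent cokernel sheaf $\operatorname{coker}(A\xrightarrow{\cdot\sigma}\omega)$ on fibers, whose support is closed, giving an open Gorenstein locus.

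The main obstacle I anticipate is making the base-change compatibility of the dualizing/canonical module precise and verifying that ``Gorenstein fiber'' matches ``cyclic fiber of $\omega$'' without circularity, since Proposition~\ref{ref:injectivityofGorenstein:prop} is stated for a single zero-dimensional local ring rather than for an algebra over a base. The delicate point is that $\omega\tensor_{\OO{Y}}k(y)$ must be identified with the canonical module of the fiber algebra $A\tensor_B k(y)$; this identification uses exactly the local freeness of $\pi_*\OO{X}$ so that $\Homt{B}{A}{B}\tensor_B k(y)\simeq \Homt{k(y)}{A\tensor_B k(y)}{k(y)}$, i.e.\ that forming the $B$-dual commutes with reduction mod $\mathfrak{m}_y$. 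Once this compatibility is established, openness is formal, so I expect the real work to lie entirely in this base-change lemma for the relative canonical module of a finite flat morphism.
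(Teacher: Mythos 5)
Your proposal is correct and follows essentially the same route as the paper: reduce to a finite flat morphism, introduce the relative dualizing sheaf $\omega_{X/Y}$ (which for a finite flat $\pi$ is $\Homsht{\OO{Y}}{\pi_*\OO{X}}{\OO{Y}}$, exactly Kleiman's description cited in the paper), use its compatibility with base change to identify $(\omega_{X/Y})_y$ with the canonical module of the fiber algebra, and conclude by openness of the locus where this sheaf is invertible/cyclic together with Proposition~\ref{ref:injectivityofGorenstein:prop}. The only difference is one of presentation: you unpack the base-change lemma $\Homt{B}{A}{B}\tensor_B k(y)\simeq \Homt{k(y)}{A\tensor_B k(y)}{k(y)}$ and the Nakayama/semicontinuity argument explicitly, where the paper delegates these points to \cite{Klei}.
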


    \definecolor{light-gray}{gray}{0.85}
    \newcommand{\mybox}[1]{%
    \noindent\colorbox{light-gray}
    {\parbox{\textwidth}{#1}}}

    \begin{proof}
        \def\DX{X}
        \def\DY{Y}
        Unfortunately, a detailed exposition would take us to far from our main
        subject, we only sketch the proof.
        The morphism $\DX\to \DY$ is projective and quasi-finite, thus
        finite by \cite[Cor 12.89]{WedhornAG}.
        The relative dualizing sheaf $\DD{omegacos}{\omega_{X/Y}}$ of a morphism $X\to Y$ is introduced in
        \cite[Def 6]{Klei} and it exists for $\DX\to \DY$ by \cite[Cor
        18]{Klei}. Furthermore $\Domegacos$ is a quasi-coherent sheaf over $\DX$ by definition and
        coherent by finiteness of the morphism and the properties from
        \cite[Def 1, Prop 9]{Klei}. Now the locus where $\Domegacos$ is
        not invertible is closed in $\DX$, thus the locus of $y\in \DY$ such
        that $\pp{\Domegacos}_{y}$ is invertible is open in $\DY$. The sheaf
        $\Domegacos$ is stable under base change by \cite[Prop 9]{Klei}, thus
        $\pp{\Domegacos}_{y}  \simeq \omega_{X_y/y}$ for every $y\in Y$.

        Let $k$ be a field and $A$ be a finite $k$-algebra. Note that we have
        a canonical isomorphism $\Homt{A}{M}{\Homt{k}{A}{W}}  \simeq
        \Homt{k}{M}{W}$ for every $A$-module $M$ and $k$-module $W$.
         Let $X' = \Spec
        A$ and $Y'=\Spec k$. The relative dualizing sheaf $\DD{tmpom}{\omega_{X'/Y'}}$ exists and it is
        isomorphic as a sheaf over $X'$ to the sheafification of
        $\Homt{k}{A}{k}$ by \cite[Def 1,
        Prop 9]{Klei} and
        the mentioned canonical isomorphism.
        The $A$-module $\Homt{k}{A}{k}$ is isomorphic to the canonical
        module $\omega_A$,
        as defined in \cite[Prop 21.1]{EisView}, by the
        discussion after \cite[Cor 21.3]{EisView}. Now the claim follows from
        Proposition \ref{ref:injectivityofGorenstein:prop}.
    \end{proof}

    As an immediate consequence of Lemma~\ref{ref:Gorensteinisopen:lem} we see that
    there is a good locus of the Hilbert scheme parametrising Gorenstein
    subschemes of $X$.
    \begin{prop}\label{ref:GorensteinlocusinHilbr:prop}
        Let $\pi:\DD{tmpU}{\mathcal{U}}\to \DD{tmpH}{\DHSchXr}$ be the projection from the universal
        family. The set of points $h\in \DtmpH$ such that the
        fiber of $\pi$ over $h$ is Gorenstein is open in $\DtmpH$.
%        and let $U \subseteq \DtmpU$ be the open set of points where
%        $\DD{omegacos}{\omega_{\DtmpU/\DtmpH}}$, the relative dualizing sheaf,
%        is invertible. We call the projection of $U$ to $\DtmpH$
%        the \emph{Gorenstein locus} of $\DHSchXr$ and denote it by
%        $\DD{HSchXrG}{\operatorname{HilbGor}_{X/k}^{r}}$. The $k$-points of
%        $U$ correspond to the fibers which are Gorenstein, i.e. to the Gorenstein schemes
%        of length $r$ in $X$.
    \end{prop}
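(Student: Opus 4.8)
The plan is to verify that the projection $\pi$ satisfies the hypotheses of Lemma~\ref{ref:Gorensteinisopen:lem} and then invoke that lemma without further work. Write $H = \Hilb_{X/k}^r$ for brevity.

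First I would recall that, by Theorem~\ref{ref:HilbertExistence:thm} applied with $S = \Spec k$ and the constant polynomial $P = r$, the scheme $H$ is projective over $k$, and that by representability the universal family $\mc{U}$ is the closed subscheme of $X \times_k H$ classified by the identity morphism $\id_H \in \underline{\Hilb}_{X/k}^r(H)$. In particular the projection $\pi$ is exactly the flat family over $H$ that this $H$-point records, so $\pi$ is flat by the very definition of the Hilbert functor.

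Next I would check the remaining hypotheses. The morphism $\pi$ is the composition of the closed immersion $\mc{U} \hookrightarrow X \times_k H$ with the projection $X \times_k H \to H$, and the latter is projective as a base change of $X \to \Spec k$; hence $\pi$ is a projective morphism of $k$-schemes, and both $\mc{U}$ and $H$ are projective over $k$. For quasi-finiteness, observe that the fiber of $\pi$ over a point $h = [Z]$ is the subscheme $Z \subseteq X$, which is zero-dimensional of degree $r$ and therefore finite; as $\pi$ is of finite type with finite fibers, it is quasi-finite.

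With all hypotheses verified, $\pi: \mc{U} \to H$ is a flat, quasi-finite morphism of projective schemes, and Lemma~\ref{ref:Gorensteinisopen:lem} immediately yields that the set of $h \in H$ for which the fiber is Gorenstein is open. There is no genuine obstacle here; the only point deserving care is to identify $\mc{U}$ correctly as the flat family classified by $\id_H$, so that the flatness required by the lemma is precisely the flatness built into the representability of the Hilbert functor.
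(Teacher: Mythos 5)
Your proof is correct and follows exactly the route the paper intends: the paper presents this proposition as an immediate consequence of Lemma~\ref{ref:Gorensteinisopen:lem}, leaving implicit the verification that $\pi$ is flat (by representability of the Hilbert functor), projective (as $\mathcal{U}$ is closed in $X\times_k H$ with both factors projective over $k$), and quasi-finite (fibers of degree $r$). You have simply made those routine checks explicit.
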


    The tangent space to the Hilbert scheme at a $k$-point corresponding to a
    Gorenstein subscheme is also easier to calculate, thanks to the
    injectivity:
    \begin{prop}\label{ref:Gorensteintangentspace:prop}
        Let $Z\subseteq_{cl} X$ be a Gorenstein zero-dimensional closed scheme
        with support contained in an open affine subscheme
        $U = \Spec A$ of $X$, where the subscheme $Z$ is cut out by an ideal
        $I\ideal A$. Then the tangent space to $\DHSchXk$ at $[Z]$ has
        rank $\dimk I/I^2 = \dimk A/I^2 - \dimk A/I$.
    \end{prop}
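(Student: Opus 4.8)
The plan is to reduce to a self-duality statement for finite Gorenstein algebras. By Corollary~\ref{ref:tangentspaceaffinely:cor} the tangent space in question is isomorphic to $\Homt{A/I}{I/I^2}{A/I}$, so writing $B = A/I$ and $M = I/I^2$ (which is a $B$-module since $I\cdot M = 0$) it suffices to prove the two equalities $\dimk\Homt{B}{M}{B} = \dimk M$ and $\dimk M = \dimk A/I^2 - \dimk A/I$. The second is the easy one: the short exact sequence of finite-dimensional $k$-vector spaces
\[
0\to I/I^2 \to A/I^2 \to A/I \to 0
\]
gives, by additivity of rank, $\dimk A/I^2 = \dimk I/I^2 + \dimk A/I$, which rearranges to the claim; finiteness of all three ranks follows since $Z$ is zero-dimensional of finite degree.

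For the main equality I would exploit the isomorphism $\Homt{k}{B}{k}\simeq B$ coming from the Gorenstein hypothesis. First I would decompose the finite $k$-algebra $B = A/I$ as a product $B = \prod_i B_i$ of zero-dimensional local rings, one for each point of the support of $Z$. The assumption that $Z$ is Gorenstein means precisely that each $B_i$ is Gorenstein, so by Proposition~\ref{ref:injectivityofGorenstein:prop} its canonical module $\omega_{B_i} = \Homt{k}{B_i}{k}$ is isomorphic to $B_i$ as a $B_i$-module. Since $\Homt{k}{-}{k}$ commutes with finite products, these local isomorphisms assemble into a $B$-module isomorphism $\Homt{k}{B}{k}\simeq B$.

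Finally I would invoke the canonical tensor-hom adjunction $\Homt{B}{M}{\Homt{k}{B}{k}}\simeq\Homt{k}{M}{k}$ (the same isomorphism used in the proof of Lemma~\ref{ref:Gorensteinisopen:lem}, with $A,W$ there replaced by $B,k$). Combining it with $\Homt{k}{B}{k}\simeq B$ yields
\[
\Homt{B}{M}{B} \simeq \Homt{B}{M}{\Homt{k}{B}{k}} \simeq \Homt{k}{M}{k} = M^{*},
\]
and since $M$ is finite-dimensional over $k$ we get $\dimk\Homt{B}{M}{B} = \dimk M^{*} = \dimk M = \dimk I/I^2$, as desired.

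The one point requiring care is the passage from the stalk-local Gorenstein condition to the global isomorphism $\Homt{k}{B}{k}\simeq B$: Proposition~\ref{ref:injectivityofGorenstein:prop} only supplies the isomorphism locally at each $B_i$, and one must verify that these are compatible with the product decomposition. This is harmless because $B$ is a finite product of its local factors and $\Homt{k}{-}{k}$ respects that product, but it is the step where the hypothesis is genuinely used, so I would state it explicitly rather than appeal to a local computation.
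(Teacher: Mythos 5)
Your proof is correct, but it reaches the key identity $\dimk\Homt{B}{M}{B}=\dimk M$ by a different mechanism than the paper. The paper makes the same two reductions (to $\Homt{A/I}{I/I^2}{A/I}$ via Corollary~\ref{ref:tangentspaceaffinely:cor}, and to the local case by treating connected components of the support separately, which is your product decomposition $B=\prod B_i$), but then it invokes clause (2) of Proposition~\ref{ref:injectivityofGorenstein:prop} --- injectivity of $B$ as a $B$-module --- to get exactness of $\Homt{B}{-}{B}$, and proves $\dimk\Homt{B}{M}{B}=\dimk M$ by induction on the length of $M$, the base case being $\Homt{B}{k}{B}=\operatorname{soc}(B)$ of rank one. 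You instead use clause (3), the identification of the canonical module $\Homt{k}{B}{k}$ with $B$, and compose it with the adjunction $\Homt{B}{M}{\Homt{k}{B}{k}}\simeq\Homt{k}{M}{k}$ to obtain an actual isomorphism $\Homt{B}{M}{B}\simeq M^*$ rather than just an equality of ranks; this is slightly stronger and avoids the induction, at the cost of importing the duality formalism that the paper only deploys in the proof of Lemma~\ref{ref:Gorensteinisopen:lem}. Both arguments ultimately rest on the same Proposition~\ref{ref:injectivityofGorenstein:prop}, and your explicit attention to assembling the local isomorphisms $\Homt{k}{B_i}{k}\simeq B_i$ across the product decomposition is exactly the point that needs care and is handled correctly.
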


    \begin{proof}
        We can analyse the connected components of the support of $Z$
        separately, thus we assume that $A/I$ is local.
        By Proposition~\ref{ref:injectivityofGorenstein:prop} we see that the
        functor $\Homt{A/I}{-}{A/I}$ is exact. Now by induction on the
        length we prove that $\dimk \Homt{A/I}{M}{A/I} = \dimk M$ for
        any finitely generated $A/I$-module $M$.
    \end{proof}

    \subsection{Macaulay's inverse systems}

    From now on, we will assume that $(A, \Dmm, k)$ is a finite local
    $k$-algebra such that $k\to A/\Dmm$ is an isomorphism. We will say simply
    ``let $(A, \Dmm, k)$ be a finite local $k$-algebra'', implicitly assuming
    that $k\to A/\Dmm$ is an isomorphism.

    So far we did not give any examples of Gorenstein rings. Macaulay's
    inverse systems give a rich source of such examples, to some extent
    classifying all local Gorenstein zero-dimensional $k$-algebras.
    First, we explain how to ``intrinsically'' put an algebra structure on the space of
    $k$-functionals on a linear space $V$, when $V$ has an additional structure
    of a coalgebra. Since we will be mainly interested in the example
    $V = k[x_1,\dots,x_n]$ and the result will be, at least in characteristic
    zero, $V^*  \simeq k\formal{\frac{\partial}{\partial
    x_1},\dots,\frac{\partial}{\partial x_n}}$, the
    less curious reader should skip directly to Theorem~\ref{ref:Macaulaycorrcor:thm}.

    We recall the definition of a coalgebra, for simplicity we deal
    only with cocommutative coalgebras:
    \begin{defn}\label{ref:coalgebra:def}
        A $k$-module $C$ with $k$-linear
        maps $\DD{diag}{\Delta}: C\to C\tensor C$ \emph{(comultiplication)} and $\varepsilon:
        C\to k$ \emph{(counity)} is a \emph{(cocommutative)
        $k$-coalgebra} if the following equations (dual to the
        commutative monoid equations) are satisfied:
        \begin{enumerate}
            \item $(\varepsilon \tensor \id_C)\circ \Ddiag = (\id_C \tensor \varepsilon)
                \circ \Ddiag = \id_C\mbox{  (counitality)}$,
            \item $(\id_C \tensor
                \Ddiag)\circ \Ddiag = (\Ddiag \tensor \id_C) \circ\Ddiag\mbox{
                (coassociativity)}$,
            \item $\DD{tmpswap}{\operatorname{swap}} \circ \Delta = \Delta
                \mbox{  (cocommutativity)}$, where
                $\Dtmpswap: C \tensor C \ni \sum c_1\tensor c_2 \mapsto \sum c_2\tensor
                c_1\in C\tensor C$.
        \end{enumerate}
    \end{defn}

    \begin{example}
        In this paper we only encounter coalgebras of the form $\Gamma(S,
        \OO{S})$ for an affine commutative monoid $S$, with $\Ddiag,
        \varepsilon$ induced by multiplication and identity of $S$. 
    \end{example}

    \begin{cor}\label{ref:algebrafromcoalg:cor}
        If $C$ is cocommutative coalgebra then the
        $k$-linear space
        $C^*:=\Hom{C}{k}$ with the multiplication defined by
        \[\pp{\varphi_1\cdot \varphi_2}(c) = \pp{\varphi_1\tensor
        \varphi_2}(\Ddiag c),\]
        is a commutative $k$-algebra and $C$ with multiplication
        \[
        \mu:C^* \tensor C \to C,\mbox{ defined by }\mu(\varphi\tensor c) =
        \pp{\varphi\tensor\id_C}(\Ddiag(c)),
        \]
        is a $C^*$-module. We will write $\varphi\cdot c$ for
        $\mu(\varphi\tensor c)$.
    \end{cor}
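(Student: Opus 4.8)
The statement is the standard construction of the convolution algebra dual to a coalgebra together with its canonical module, and the proof consists of dualizing each coalgebra axiom in turn. Throughout I would adopt Sweedler notation, writing $\Delta c = \sum c_1 \tensor c_2$, so that the two operations read $(\varphi_1 \cdot \varphi_2)(c) = \sum \varphi_1(c_1)\varphi_2(c_2)$ and $\varphi \cdot c = \sum \varphi(c_1)\, c_2$.

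First I would establish that $C^*$ is a commutative, associative, unital $k$-algebra. Associativity is dual to coassociativity: expanding $\big((\varphi_1\cdot\varphi_2)\cdot\varphi_3\big)(c)$ and $\big(\varphi_1\cdot(\varphi_2\cdot\varphi_3)\big)(c)$ presents both as $(\varphi_1\tensor\varphi_2\tensor\varphi_3)$ applied to $(\Delta\tensor\id_C)\Delta c$ and to $(\id_C\tensor\Delta)\Delta c$ respectively, and these agree by coassociativity. Commutativity follows from $(\varphi_2\cdot\varphi_1)(c) = (\varphi_1\tensor\varphi_2)\big(\operatorname{swap}(\Delta c)\big)$ together with cocommutativity $\operatorname{swap}\circ\Delta = \Delta$. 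Finally counitality exhibits $\varepsilon$ as the unit: $(\varepsilon\cdot\varphi)(c) = \sum\varepsilon(c_1)\varphi(c_2) = \varphi\big((\varepsilon\tensor\id_C)\Delta c\big) = \varphi(c)$.

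Next I would verify that $\mu$ makes $C$ into a $C^*$-module. The map $\mu$ is $k$-bilinear by construction, and the unit axiom $\varepsilon\cdot c = (\varepsilon\tensor\id_C)(\Delta c) = c$ is again counitality. The remaining axiom $(\varphi_1\cdot\varphi_2)\cdot c = \varphi_1\cdot(\varphi_2\cdot c)$ is the delicate one: expanding the left-hand side gives $(\varphi_1\tensor\varphi_2\tensor\id_C)$ applied to $(\Delta\tensor\id_C)\Delta c$, whereas the right-hand side, after one application of $\mu$ followed by another, gives $(\varphi_2\tensor\varphi_1\tensor\id_C)$ applied to $(\id_C\tensor\Delta)\Delta c$, with the roles of $\varphi_1$ and $\varphi_2$ transposed on the first two factors.

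The main obstacle is precisely reconciling these two expressions. Coassociativity identifies the iterated comultiplications $(\Delta\tensor\id_C)\Delta = (\id_C\tensor\Delta)\Delta$, but it does not by itself undo the transposition of $\varphi_1$ and $\varphi_2$. The key observation is that for a \emph{cocommutative} coalgebra this common iterated comultiplication is symmetric under permutation of its three tensor factors: swapping the first two factors is realized by $(\operatorname{swap}\tensor\id_C)(\Delta\tensor\id_C)\Delta = \big((\operatorname{swap}\circ\Delta)\tensor\id_C\big)\Delta = (\Delta\tensor\id_C)\Delta$, using cocommutativity, and symmetrically for the last two factors via the other presentation. This $S_3$-invariance lets one interchange $\varphi_1$ and $\varphi_2$ on the first two slots, so the two sides coincide. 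Thus cocommutativity is exactly what promotes the formula for $\mu$ to an honest module structure over the commutative algebra $C^*$, which completes the proof.
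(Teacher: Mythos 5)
Your proof is correct and follows essentially the same route as the paper, namely a direct verification obtained by dualizing each coalgebra axiom. In fact your treatment of the module axiom is slightly sharper than the paper's: the paper attributes $(\varphi_1\cdot\varphi_2)\cdot c=\varphi_1\cdot(\varphi_2\cdot c)$ to coassociativity alone, whereas, as you correctly observe, coassociativity only identifies the two iterated comultiplications and one still needs cocommutativity (equivalently, commutativity of $C^*$) to undo the transposition of $\varphi_1$ and $\varphi_2$ — without it the formula for $\mu$ would only define a module over the opposite algebra.
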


    \begin{proof}
        The $k$-algebra structure follows directly from Definition
        \ref{ref:coalgebra:def}. To check that $C$ is a $C^*$-module, it is,
        by linearity, sufficient to prove $\varphi_1\cdot (\varphi_2\cdot c) =
        (\varphi_1\cdot \varphi_2)\cdot c$ and this follows from definition of
        multiplication on $C^*$ and coassociativity of $C$.
    \end{proof}

    An enlightening exercise is to analyze dependencies between the
    subcoalgebras of $C$ and $C^*$-submodules of $C$.

    \begin{remark}
        We are primarily interested in the example $C =
        \DD{GlAn}{\Gamma\pp{\mathbb{G}_a(k)^n}} = k[x_1,\dots,x_n]$, where
        $\mathbb{G}_a(k)$ is the affine line over $k$ with addition.
        In this case the ring obtained in Corollary
        \ref{ref:algebrafromcoalg:cor} is called the ring of divided powers, but
        we will not go into details here.
        Below we give an explicit description of $C^*$ and its action on $C$
        when $k$ is a field of characteristic zero.
    \end{remark}

    \begin{prop}\label{ref:computedualofAn:prop}
        Let $k$ be a field of characteristic zero.
        Fix the isomorphism \[\DGlAn  \simeq
        k[x_1,\dots,x_n] := C,\]
        then $C^*$ is isomorphic to $k\formal{y_1, \dots,
        y_n}$,
        the ring of formal power series, by identifying a monomial $y_1^{a_1}\cdot
        \dots y_n^{a_n}$ with the dual to
        the monomial $\pp{a_1!\cdot a_2!\cdot\ldots\cdot a_n!}^{-1}\cdot x_1^{a_1}\dots x_n^{a_n}$ in the monomial basis.
        The element $y_i$ acts on $k[x_1,\dots,x_n]$ by
        $f\mapsto y_i\hook f := \frac{\partial f}{\partial x_i}$ and this extends to the action
        of $C^*$ by multiplicativity and countable additivity. 
    \end{prop}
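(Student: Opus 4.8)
The plan is to make the coalgebra structure on $C$ completely explicit, build the candidate isomorphism $\Phi\colon k\formal{y_1,\dots,y_n}\to C^*$ by hand on the monomial basis, verify multiplicativity by a direct convolution computation, and finally identify the induced action with differentiation. Since $\mathbb{G}_a(k)^n$ is a commutative monoid under addition, its multiplication morphism is a morphism of affine schemes, so the comultiplication $\Delta\colon C\to C\otimes C$ it induces is a $k$-algebra homomorphism; reading off the addition law gives $\Delta(x_i)=x_i\otimes 1+1\otimes x_i$ and $\varepsilon(f)=f(0)$. Extending multiplicatively and writing $x^a=x_1^{a_1}\cdots x_n^{a_n}$, $a!=a_1!\cdots a_n!$, $\binom{b}{c}=\prod_i\binom{b_i}{c_i}$, the binomial theorem yields the key formula
\[
\Delta(x^b)=\sum_{c\leq b}\binom{b}{c}\,x^c\otimes x^{b-c}.
\]

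For each multi-index $a$ I would define $y^a\in C^*$ by $y^a(x^b)=a!\,\delta_{a,b}$ on the monomial basis, so that $y^a$ is precisely the functional dual to $(a!)^{-1}x^a$, and set $y_i=y^{e_i}$. Because $\{x^b\}$ is a $k$-basis of $C$, the linear dual $C^*=\Hom{C}{k}$ is the product $\prod_b k$, and I define $\Phi$ by sending $\sum_a c_a y^a$ to the functional $x^b\mapsto c_b\,b!$. This is a $k$-linear bijection, since a functional $\varphi$ is recovered by setting $c_b=\varphi(x^b)/b!$; here the hypothesis $\operatorname{char}k=0$ is essential, as it is exactly what makes every $b!$ invertible. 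The constant series $1=y^0$ maps to $\varepsilon$, the unit of $C^*$.

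The multiplicativity of $\Phi$ reduces to the identity $y^a\cdot y^{a'}=y^{a+a'}$ in $C^*$, where $\cdot$ is the convolution product from Corollary~\ref{ref:algebrafromcoalg:cor}. Using the expansion of $\Delta(x^b)$ one computes
\[
(y^a\cdot y^{a'})(x^b)=\sum_{c\leq b}\binom{b}{c}\,y^a(x^c)\,y^{a'}(x^{b-c}),
\]
and the only surviving term forces $c=a$ and $b-c=a'$; the cancellation $\binom{a+a'}{a}\,a!\,(a')!=(a+a')!$ then delivers exactly $y^{a+a'}(x^b)$. Since for each fixed $b$ the convolution is a finite sum over $c\leq b$, this computation extends verbatim to arbitrary power series and matches the Cauchy product in $k\formal{y_1,\dots,y_n}$, so $\Phi$ is a $k$-algebra isomorphism.

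For the action I would compute $\mu(y_i\otimes x^b)=(y_i\otimes\id_C)(\Delta(x^b))=\sum_{c\leq b}\binom{b}{c}y_i(x^c)\,x^{b-c}$; only $c=e_i$ contributes, giving $b_i\,x^{b-e_i}=\partial x^b/\partial x_i$, whence $y_i\hook f=\partial f/\partial x_i$ on all of $C$ by linearity. The module axiom $\varphi_1\cdot(\varphi_2\cdot c)=(\varphi_1\varphi_2)\cdot c$ together with $y^a=\prod_i y_i^{a_i}$ then identifies the action of $y^a$ with the iterated partial derivative, and since every polynomial is annihilated by all but finitely many such operators, the extension ``by multiplicativity and countable additivity'' is literally a finite sum on each argument. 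The main obstacle is the multiplicativity step: one must confirm that convolution in $C^*$ really reproduces the power-series product, which rests on the expansion of $\Delta(x^b)$ and the factorial identity above, and one must check that passing from monomials to infinite series is harmless precisely because convolution is computed degree-by-degree.
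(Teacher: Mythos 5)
Your proposal is correct and follows essentially the same route as the paper: both compute $\Delta(\mathbf{x}^{\mathbf{a}})$ via the binomial theorem, identify $C^*$ with the dual monomial basis rescaled by factorials (which is where characteristic zero enters), verify the multiplication law by the same convolution computation, and check that $y_i$ acts as $\partial/\partial x_i$ on monomials. The only cosmetic difference is that the paper first names the unscaled dual basis $\mathbf{z}^{\mathbf{a}}$ and then maps $\mathbf{y}^{\mathbf{a}}\mapsto \mathbf{a}!\,\mathbf{z}^{\mathbf{a}}$, whereas you build the rescaled functionals directly.
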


    \begin{proof}
        \def\ii#1{\mathbf{#1}}
        We need the multi-index notation: let  $\ii{x} := (x_1, \dots, x_n)$,
        then~by $\ii{x}^{\ii{a}}$ we mean the monomial $\prod
        x_i^{a_i}$, by $\ii{a}!$ the product of $a_i!$ and by
        $\binom{\ii{a}}{\ii{b}}$ the expression
        $\ii{a}!/(\ii{b}!\pp{\ii{a}-\ii{b}}!)$, which is defined when $a_i\geq
        b_i$ for all $i$.

        Let us fix the isomorphism of $k$-vector spaces $C^* \simeq k\formal{z_1, \dots, z_n}$
        by identifying $\ii{z}^{\ii{a}}$ with the dual to $\ii{x}^{\ii{a}}$ in the
        monomial basis.
        We observe that $\Ddiag: C\to C\tensor C$ is defined by setting
        $\Ddiag(x_i) := 1\tensor x_i + x_i \tensor 1$ and extending to a
        $k$-algebra homomorphism, which gives
        \def\ind#1#2{\ii{#1}^{\,\ii{#2}}}%
        \def\indbinom#1#2{\binom{\ii{#1}}{\ii{#2}}}%
        \[
        \Ddiag\pp{\ind{x}{a}} = \sum_{\ii{b}, \ii{c}:\ \ii{b} + \ii{c} = \ii{a}}
        \indbinom{a}{b}\ind{x}{b} \tensor \ind{x}{c},\mbox{ so
        that }
        \]
        \[
        \ind{z}{b}\cdot \ind{z}{c} = \binom{\ii{b}+\ii{c}}{\ii{b}}\ii{z}^{\ii{b}+\ii{c}}.
        \]
        Now define a $k$-linear map $f:k\formal{y_1,\dots,y_n}\to
        C^*$ by $f\pp{\ind{y}{a}} = \ii{a}!\cdot \ind{z}{a}$.
        Since $k$ is of characteristic $0$ it is an isomorphism.
        From the above multiplication law it follows that $f$ is a $k$-algebra
        homomorphism.
        To finish the proof it is sufficient to note that
        \[
        y_i \hook \ind{x}{a} = \binom{\ii{a}}{1_i}\ii{x}^{\ii{a}-1_i} =
        a_i\cdot \ii{x}^{\ii{a}-1_i} = \frac{\partial \ind{x}{a}}{\partial x_i},
        \]
        where $1_i = (0, \dots, 0, 1, 0, \dots, 0)$ with $1$ on $i$-th
        position.
    \end{proof}

    Now we will prove the main theorem of Macaulay's inverse systems.
    The duality coming from this theorem is, as far as we know,
    used only in the case of $C = \DGlAn$, but we will
    prove it in the language of coalgebras. The reasons are twofold. Firstly, the proof is essentially
    the same and clearer to prove in the abstract setting. Secondly, even in
    the case $C = \DGlAn$, where $k$ has positive characteristic, the
    algebra $C^*$ is quite complicated to deal by hand.

    Recall that for a ring $A$ and $A$-modules $N \subseteq M$ we say that $N$ is a \emph{small}
    $A$-submodule of $M$ if for every $A$-submodule $M' \subseteq M$ the
    equality $M' + N = M$ implies $M' = M$. As we will see, this is
    ``dual'' to being essential.

    For a coalgebra $C$ and $D \subseteq C$ by $D^{\perp}$ we denote $\{\varphi\in
        C^{*}\ |\ \varphi(d) = 0\mbox{ for all }d\in D\}$, similarly for $I \subseteq C^*$ by
        $I^{\perp}$ we denote $\left\{ c\in C\ |\ i(c) = 0\mbox{ for all }i\in
        I\right\}$, where $\varphi(d)$ is the value of a functional
        $\varphi\in C^*$ on an element $d\in C$.

    \begin{thm}[Macaulay's inverse systems]\label{ref:Macaulaycorr:thm}
        Let $C$ be a cocommutative $k$-coalgebra.
        Suppose that
        \begin{enumerate}
            \item  there is
                a rank one subcoalgebra $C_0 \subseteq C$, contained in each
                non-zero $C^*$-submodule of $C$;
            \item\label{item:coalgebrasdense} for every ideal $I$ of $C^*$ such
                that $C^*/I$ is finite (over $k$) there is a finite  (over $k$) $C^*$-submodule $D \subseteq C$ such
                that $D^{\perp} \subseteq I$.
        \end{enumerate}
        Then
        \begin{enumerate}
            \item there is a bijection
                \begin{align*}
                \left\{ I\ideal C^*\ |\ \dimk C^*/I< \infty \right\}
                &\longleftrightarrow \left\{C^*\mbox{-submodules of }C\mbox{ finite over }k\right\}\\
                    f_1:I\ideal C^*\ \  &\longmapsto\ \  I^{\perp}\\
                    f_2:D^{\perp}\ideal C^*\ \ &\mbox{\reflectbox{$\longmapsto$}}\ \  D \subseteq C;
                \end{align*}
            \item the above bijection preserves $k$-rank:
                \[\dimk C^*/I = \dimk I^{\perp}\mbox{ and }\dimk D = \dimk
                C^*/D^{\perp};\]
            \item the algebra $C^*$ is local with residue field $k$;
            \item\label{item:essential} If $D, D' \subseteq C$ are finitely generated $C^*$-modules,
                then $D' \subseteq D$ is small if and only if the image of $D'^{\perp}$
                in $C^*/D^{\perp}$ is essential;
            \item the above bijection restricts to a bijection between
                Gorenstein quotients of $C^*$ and cyclic $C^*$-submodules of
                $C$.
        \end{enumerate}
    \end{thm}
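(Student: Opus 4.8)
The plan is to exhibit $f_1$ and $f_2$ as mutually inverse by reducing, wherever the modules are finite, to ordinary finite-dimensional linear duality. The engine is the adjunction formula
\[
\varphi(\psi\cdot c) = (\varphi\psi)(c)\qquad(\varphi,\psi\in C^*,\ c\in C),
\]
which falls out of the definitions of the product on $C^*$ and of the action $\mu$ in Corollary~\ref{ref:algebrafromcoalg:cor} together with coassociativity. From it one reads off that $D^{\perp}$ is an \emph{ideal} of $C^*$ whenever $D$ is a $C^*$-submodule, that $I^{\perp}$ is a $C^*$-submodule whenever $I$ is an ideal, and that $D^{\perp}$ annihilates $D$; these observations make the two maps type-correct. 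I would first record the one genuinely elementary duality: for an \emph{arbitrary} $k$-subspace $D\subseteq C$ one has $(D^{\perp})^{\perp}=D$, because $C^*$ separates the points of $C$ (for $c\notin D$ extend a basis of $D$ to obtain $\varphi\in D^{\perp}$ with $\varphi(c)\neq 0$).

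Next, for a \emph{finite} $C^*$-submodule $D$, the induced pairing $C^*/D^{\perp}\times D\to k$ is nondegenerate on both sides; since $D$ is finite this forces $C^*/D^{\perp}$ to be finite with $\dimk C^*/D^{\perp}=\dimk D$, and it also produces a $C^*$-module isomorphism $D\simeq (C^*/D^{\perp})^{*}$. This settles $f_1\circ f_2=\id$ and half of the rank statement. The crux is the reverse composition, i.e. $(I^{\perp})^{\perp}=I$ for every finite-corank ideal $I$, and this is exactly where the density hypothesis~\ref{item:coalgebrasdense} is indispensable, since for subspaces of $C^*$ the double perp can be strictly larger. I would apply~\ref{item:coalgebrasdense} to get a finite submodule $D$ with $D^{\perp}\subseteq I$, whence $I^{\perp}\subseteq (D^{\perp})^{\perp}=D$ is finite; then everything takes place inside the finite-dimensional algebra $B:=C^*/D^{\perp}$, over which $D\simeq B^{*}$. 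The ideal $I$ descends to $\bar I\subseteq B$ and $I^{\perp}$ becomes the ordinary annihilator of $\bar I$ inside $B^{*}$, so the finite-dimensional identities $\dimk I^{\perp}=\dimk B-\dimk\bar I=\dimk C^*/I$ and $(I^{\perp})^{\perp}=I$ follow from classical duality in $B$. This closes both the bijectivity and the rank claims.

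For locality I would use the existence of the rank-one subcoalgebra $C_0$ contained in every nonzero submodule. No finite quotient $B=C^*/D^{\perp}$ can split as a nontrivial product, for two complementary factors would yield two nonzero submodules of $D\simeq B^{*}$ meeting only in $0$, contradicting that each contains $C_0$; hence every such $B$ is local. Dually $\mathfrak m:=C_0^{\perp}$ is a corank-one ideal, so $C^*/\mathfrak m\simeq k$, and any proper finite-corank ideal lies in $\mathfrak m$ because its perp is a nonzero submodule and therefore contains $C_0$. Using that $C$ is the union of its finite $C^*$-submodules (each element sits in a finite subcoalgebra, hence in a finite submodule), so that $\bigcap_I I=0$, one concludes that $\mathfrak m$ is the unique maximal ideal and that the residue field is $k$.

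The smallness–essentiality and Gorenstein–cyclic claims are then formal consequences of the perp being an inclusion-reversing, rank-preserving lattice anti-isomorphism between submodules of a finite $D$ and ideals of $B=C^*/D^{\perp}$. A submodule $D'\subseteq D$ is small exactly when $D'\subseteq\mathfrak m D=\operatorname{rad}(D)$ by Nakayama's lemma, and under the duality $\operatorname{rad}(D)$ corresponds to the socle of the dual; since by Lemma~\ref{ref:socleissocle:lem} the socle is the smallest essential submodule, $D'\subseteq\mathfrak m D$ is equivalent to the image of $D'^{\perp}$ in $C^*/D^{\perp}$ containing the socle, i.e. to its being essential. Finally $C^*/I$ is Gorenstein iff $\dimk\operatorname{soc}(C^*/I)=1$ by Definition~\ref{ref:Gorensteinsocledegree:def}, and the same duality swaps the socle with the top $I^{\perp}/\mathfrak m I^{\perp}$, so this is equivalent to $I^{\perp}$ being generated by a single element, that is, cyclic.

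The main obstacle is the middle step $(I^{\perp})^{\perp}=I$: the double-perp identity is automatic on the $C$ side but genuinely fails for general subspaces on the $C^*$ side, and bridging that gap is precisely the role of hypothesis~\ref{item:coalgebrasdense}. Once the problem is localized into the finite-dimensional algebra $B=C^*/D^{\perp}$ and the isomorphism $D\simeq B^{*}$ is in hand, the remaining parts are bookkeeping with the socle/radical duality.
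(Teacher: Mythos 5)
Your proposal is correct and follows essentially the same route as the paper: the Galois connection together with the fact that $C^*$ separates points of $C$ gives $f_1\circ f_2=\id$, hypothesis~(2) localizes the reverse composition and the rank count into the finite-dimensional quotient $B=C^*/D^{\perp}$ (with $D\simeq B^*$) where classical duality finishes, and $C_0$ yields locality. The only, harmless, divergence is in parts (4)--(5): the paper argues directly that the inclusion-reversing bijection exchanges sums with intersections and hence ``small'' with ``essential'', and then reduces (5) to (4), whereas you route both claims through the Nakayama/socle characterizations ($D'$ small iff $D'\subseteq\mathfrak{m}D$, essential iff containing the socle) --- both arguments work.
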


    \begin{proof}
        We view $C$ as contained in $C^{**}$ via the canonical map.
%        Since $C^*/I$ is finite over $k$, the vector space $I^{\perp}
%        \cap C$ is finite over $k$ and it is a $C^*$-module, because
%        $I$ is an ideal. On the other hand, if $D\subseteq C$ is a subcoalgebra,
%        $D^{\perp}$ is an ideal of $C$. If $D \subseteq C$ is finite, then
%        $D^*$ is finite. This proves that $f_1, f_2$ are
%        well defined.
        \begin{enumerate}
            \item
                Clearly $D \subseteq f_1\circ f_2(D) = \left(D^{\perp\perp} \cap
                C\right)$ and if $c\in C \setminus D$ then we can find
                $\varphi\in D^{\perp}$ such that $\varphi(c)\neq 0$, this
                proves $f_1\circ f_2 = \id$.
                Similarly, $I \subseteq f_2\circ f_1(I) = \pp{I^{\perp} \cap
                C}^{\perp}$. Take $D \subseteq C$ from Condition~\ref{item:coalgebrasdense}, then we have a perfect pairing
                between finite vector spaces $C^*/D^{\perp}$ and
                $D$, which restricts to a perfect pairing of $C^*/I$ with
                $f_1(I)$; this proves $f_2\circ f_1(I) = I$ by counting
                $k$-ranks.
            \item It is sufficient to check this for $f_2$ and then it is
                trivial as $D$ is finite over $k$ and $C^*/D^{\perp} \simeq
                D^*$.
            \item As $C_0 \subseteq D \subseteq C$ for any $D$, we see that
                $C_0^{\perp}$ is the largest ideal of $C^*$ such that the
                quotient is finite over $k$, thus the largest
                ideal of $C^*$.
            \item Let $I := D^{\perp}$. Suppose that $\DD{fJ}{J/I} \subseteq
                \DD{fI}{C^*/I}$ is essential and set $\DD{dJ}{E} := f_1(J)
                \subseteq D$. Take any submodule $\DD{dJprim}{E'}
                \subsetneq
                D$ with corresponding ideal $\DD{fJprim}{J'/I} \subseteq C^*/I$.
                Since $\DfJprim\neq 0$ we have $\DfJprim \cap \DfJ \neq 0$ and
                so $\left(J' \cap J\right)^{\perp} = \DdJprim + \DdJ$ is not
                the whole of $D$. This argument can be reversed.
            \item Gorenstein quotients are precisely those containing a
                rank one essential submodule, so by Point~\ref{item:essential} it is enough to prove that cyclic modules
                are precisely those having a corank one small
                submodules. Indeed by Nakayama's lemma (see
                \cite[Cor 4.8]{EisView}) a cyclic module $C^*c$
                has corank one
                small submodule $\Dmm_{C^*}c$. Suppose now that $D\subseteq C$
                contains a corank one small submodule $D' \subseteq D$
                and choose $c\in D\setminus D'$; then $C^*c + D' = D$, so
                $C^*c = D$.\qedhere
        \end{enumerate}
    \end{proof}

    The corollary of Theorem~\ref{ref:Macaulaycorr:thm} are the classical
    Macaulay's inverse systems:
    \begin{thm}[Macaulay's inverse systems for
        $\mathbb{G}_a^n$]\label{ref:Macaulaycorrcor:thm}
        Let $k$ be a field of characteristic zero and $V$ be a $k$-vector
        space with basis $x_1,\dots,x_n$.
        Let $\DD{SS}{S} = \operatorname{Sym}(V) = \DD{Sring}{k[x_1,\cdots,x_n]}$ be a polynomial $k$-algebra and
        $\DD{TT}{S^*} := \DD{Tring}{k\formal{y_1,\cdots,y_n}}$ be a ring of power series. We view $\DTT$
        as acting on $\DSS$ by identifying $y_i$ with $\frac{\partial}{\partial
        x_i}$; denote this action by $\hook: \DTT\tensor \DSS\to \DSS$. There is a bijection
                \begin{align*}
                    \left\{I\ideal \DTT\ |\ \dimk \DTT/I < \infty \right\}
                    &\longleftrightarrow \left\{\mbox{ finitely generated }\DTT\mbox{-submodules of }\DSS\right\}\\
                    f_1:I\ideal\DTT\ \  &\longmapsto\ \  \annn{\DSS}{I} =: I^{\perp}\\
                    f_2:M^{\perp} := \annn{\DTT}{M}\ \ &\mbox{\reflectbox{$\longmapsto$}}\ \
                    M\subseteq \DSS;
                \end{align*}
                preserving $k$-rank and restricting to a
                bijection between Gorenstein quotients of $\DTT$ and cyclic $\DTT$-submodules of
                $\DSS$. Moreover $I$ is homogeneous if and only if $\annn{\DSS}{I}$ is
                generated by homogeneous polynomials.
            \end{thm}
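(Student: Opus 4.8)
The plan is to deduce the corollary from the general Theorem~\ref{ref:Macaulaycorr:thm}, applied to the cocommutative coalgebra $C = S = k[x_1,\dots,x_n]$ carrying the $\mathbb{G}_a^n$-comultiplication $\Delta(x_i) = 1\tensor x_i + x_i\tensor 1$. Proposition~\ref{ref:computedualofAn:prop} identifies $C^*$ with $S^* = k\formal{y_1,\dots,y_n}$ acting by $y_i\hook f = \partial f/\partial x_i$, so once the two hypotheses of Theorem~\ref{ref:Macaulaycorr:thm} are checked, parts (1), (2) and (5) of that theorem supply the claimed rank-preserving bijection and the restriction to Gorenstein quotients versus cyclic submodules. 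What then remains is to reconcile the abstract pairing $\perp$ of the theorem with the annihilator $\perp$ used in the corollary, and to prove the homogeneity addendum, which is not part of the general statement.

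First I would check hypothesis~(1): the line of constants $C_0 = k\cdot 1$ is a rank one subcoalgebra, as $\Delta(1) = 1\tensor 1$ and $\varepsilon(1) = 1$, and it lies in every nonzero $S^*$-submodule $M\subseteq S$. Indeed, for $0\neq f\in M$ of degree $d$ choose a degree-$d$ monomial $x^a$ occurring in $f$ with coefficient $\lambda\neq 0$; then $y^a\hook f = \lambda\, a!$ is a nonzero constant, because $\partial^a$ annihilates every monomial of degree at most $d$ other than $x^a$, and $a!\neq 0$ in characteristic zero, so $C_0\subseteq M$. Next I would check hypothesis~(\ref{item:coalgebrasdense}): if $I\ideal S^*$ has $\dimk S^*/I<\infty$, then $S^*/I$ is a finite-dimensional local ring, its maximal ideal is nilpotent, and hence $I\supseteq \mathfrak{n}^N$ where $\mathfrak{n} = (y_1,\dots,y_n)$ and $N\gg 0$. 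Taking $D$ to be the space of polynomials of degree $<N$ --- a finite-dimensional $S^*$-submodule of $S$ because each $\partial/\partial x_i$ lowers degree --- the pairing $\langle y^a, x^b\rangle = a!\,\delta_{a,b}$ gives $D^{\perp} = \mathfrak{n}^N\subseteq I$, which is exactly the required density condition.

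With Theorem~\ref{ref:Macaulaycorr:thm} now applicable, I would settle three points. The finite-over-$k$ submodules of $S$ are precisely the finitely generated $S^*$-submodules: a cyclic module $S^*f$ is spanned by the finitely many partial derivatives of the polynomial $f$, hence finite over $k$, and the reverse implication is immediate. For the identification of the two meanings of $\perp$, I would use the counit identity $\varepsilon(\varphi\hook c) = \varphi(c)$ together with cocommutativity, which yield $\chi(\psi\hook f) = (\chi\psi)(f)$ for all $\chi,\psi\in S^*$; combined with the fact that $S^*$ separates points of $S\subseteq S^{**}$, this shows that for an ideal $I$ the pairing-perp $\{f : i(f)=0,\ i\in I\}$ coincides with $\annn{S}{I}$, and dually for a submodule $M$ that its pairing-perp equals $\annn{S^*}{M}$. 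Finally, for the homogeneity addendum I would exploit that the pairing matches the degree-$d$ part of $S^*$ with the degree-$d$ part of $S$: writing elements in homogeneous components, $\langle\varphi,f\rangle$ pairs only equal degrees, so $I$ is closed under passing to homogeneous components if and only if $\annn{S}{I}$ is, and a cofinite ideal of $S^*$ that is closed under homogeneous components is exactly one generated by homogeneous polynomials. The short hypothesis checks rest on the clean fact that a cofinite ideal of a power series ring contains a power of its maximal ideal; the part demanding genuinely new work is the homogeneity statement, since it falls outside Theorem~\ref{ref:Macaulaycorr:thm} and must be read off directly from the graded structure of the pairing.
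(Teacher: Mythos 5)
Your proposal is correct and follows essentially the same route as the paper: deduce the statement from Theorem~\ref{ref:Macaulaycorr:thm} via Proposition~\ref{ref:computedualofAn:prop}, verify hypothesis~(1) by producing a nonzero constant $y^{a}\hook f = \lambda\, a!$ in any nonzero submodule, verify hypothesis~(2) by noting that a cofinite ideal contains $\mathfrak{n}^{N}$, which is the perp of the polynomials of degree less than $N$, and observe that finite $k$-rank coincides with finite generation for $\DTT$-submodules of $\DSS$. You in fact supply two details the paper leaves implicit or to the reader --- the identification of the coalgebra pairing perp with the annihilator perp via $\chi(\psi\hook f)=(\chi\psi)(f)$, and the graded-pairing argument for the homogeneity addendum --- and both are carried out correctly.
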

    \begin{proof}
        We will use Theorem~\ref{ref:Macaulaycorr:thm} together with
        Proposition~\ref{ref:computedualofAn:prop}. First let us check the
        assumptions of Theorem~\ref{ref:Macaulaycorr:thm}.
        First, $k \subseteq \DSS$ is contained in any non-zero $\DTT$-submodule
        of $\DSS$. Next, take $I\ideal \DTT$ such that $\DTT/I$ is finite
        over $k$. The algebra $\DTT/I$ is local and
        artinian, thus $\Dmm_{\DTT}^{m+1} \subseteq I$ for some $m$. But
        $\Dmm_{\DTT}^{m+1}$ is
        the ideal orthogonal to the $\DTT$-submodule of $\DSS$ consisting of
        polynomials of degree at most $m$.
        
        Note that each cyclic $\DTT$-submodule of $\DSS$ is finite
        over $k$, so that a $\DTT$-submodule $M \subseteq \DSS$ is
        finite over $k$ if and only if it is a finitely generated $\DTT$-module.
        Now the main claims follow from Theorem~\ref{ref:Macaulaycorr:thm}
        together with Proposition~\ref{ref:computedualofAn:prop}, we left the homogeneity fact for the
        reader.
    \end{proof}

    \begin{defn}
        For an ideal $I\ideal \DTT$, any $f\in S$ such that $f^{\perp} = I$ is called
        the \emph{dual socle generator} of $\DTT/I$. Conversely for $f\in
        \DSS$ the quotient $\DTT/f^{\perp}$ is called the \emph{apolar
        algebra of $f\in \DSS$}.
    \end{defn}

    \begin{remark}
        Every local finite Gorenstein algebra $(A,\Dmm, k)$ may be viewed as a
        quotient of $\DTT$, and the most important part of
        Theorem~\ref{ref:Macaulaycorrcor:thm} is that
        any such algebra is isomorphic to an algebra of the form
        $\DTT/f^{\perp}$ for some polynomial $f\in \DSS$. We will explore this
        ideas in Subsection~\ref{ref:dualgen:section}.
    \end{remark}

    \begin{example}
        \def\Dpartial#1{\frac{\partial}{\partial x_{#1}}}
        If $f = x_1^2 + x_1\cdot x_2$, then $I = \left(
        \pp{\Dpartial{2}}^2, \pp{\Dpartial{1}}^2 - 2\cdot
        \Dpartial{1}\Dpartial{2} \right)$. The socle of
        the dual algebra is generated by the image of
        e.g.~$\Dpartial{1}\Dpartial{2}$.
    \end{example}

    \subsection{Perfect pairings}

    The following proposition states, in the linear-algebra language, that a finite local
    Gorenstein algebra $A$ is isomorphic to its canonical module, see
    Proposition~\ref{ref:injectivityofGorenstein:prop}). It will be used in
    the following subsection.

    \begin{prop}\label{ref:ppairingonGorenstein:prop}
        Let $(A, \Dmm, k)$ be a finite local Gorenstein $k$-algebra. Choose any splitting into
        $k$-modules $A = V \oplus \DD{socA}{\operatorname{soc}(A)}$ and a projection $\pi: A\to
        \DsocA  \simeq k$.
        Then the pairing
        \[
        m: A \times A \ni (a, b) \longmapsto \pi(ab)\in k
        \]
        is perfect.
    \end{prop}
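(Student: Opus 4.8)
The plan is to prove that the symmetric bilinear form $m$ is non-degenerate; since $A$ is finite over $k$, a non-degenerate symmetric form on a finite-dimensional space is automatically perfect. Indeed, non-degeneracy says exactly that the induced $k$-linear map $A \to A^*$, $a \mapsto m(a, -)$, is injective, and an injective linear map between $k$-vector spaces of equal (finite) rank is an isomorphism. I first record that $m$ is symmetric: as $A$ is commutative, $m(a, b) = \pi(ab) = \pi(ba) = m(b, a)$. Hence it suffices to check left non-degeneracy, i.e.\ that $a \in A$ with $\pi(ab) = 0$ for all $b \in A$ forces $a = 0$.

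So fix $a \neq 0$ and look at the nonzero ideal $(a) = Aa$. The heart of the matter is that every nonzero ideal of $A$ contains a nonzero socle element on which $\pi$ does not vanish. By Lemma~\ref{ref:socleissocle:lem}, the socle $\operatorname{soc}(A)$ is an essential submodule, so $(a) \cap \operatorname{soc}(A) \neq 0$; choose $c \in A$ with $0 \neq ca \in \operatorname{soc}(A)$. Here the Gorenstein hypothesis is decisive: by Definition~\ref{ref:Gorensteinsocledegree:def} we have $\dimk \operatorname{soc}(A) = 1$, so the projection $\pi$ restricts to an isomorphism $\operatorname{soc}(A) \to k$ and is therefore nonzero on every nonzero socle element. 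Consequently $m(a, c) = \pi(ac) = \pi(ca) \neq 0$, contradicting the assumption that $a$ lies in the left radical. This proves non-degeneracy, and together with the opening remark the pairing is perfect.

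The only step demanding genuine care --- and the one place where the Gorenstein assumption cannot be dropped --- is the transition from ``$a \neq 0$'' to ``some multiple $ca$ is a nonzero socle element detected by $\pi$.'' Essentiality of the socle (valid for any finite module over the local ring) produces a nonzero element of $(a) \cap \operatorname{soc}(A)$, but it is one-dimensionality of the socle that guarantees $\pi$ does not kill it: for a higher-dimensional socle a nonzero socle element could lie in $\ker \pi$ and the argument would collapse. Everything else is formal --- symmetry from commutativity and the equal-rank ``injectivity implies isomorphism'' principle for finite-dimensional $k$-vector spaces.
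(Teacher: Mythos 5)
Your proof is correct and follows essentially the same route as the paper's: reduce perfectness to non-degeneracy, use essentiality of the socle (Lemma~\ref{ref:socleissocle:lem}) to find a nonzero socle element in $Aa$, and use one-dimensionality of the socle to see that $\pi$ detects it. You simply make explicit the steps the paper leaves implicit.
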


    \begin{proof}
        It is enough to prove that for any $a\in A$ we have $\DD{tmpim}{m(\{ka\} \times
        A)}\neq 0$. But $\Dtmpim = \pi(Aa) \neq 0$ as the ideal $Aa$ contains
        an element from the socle by Lemma~\ref{ref:socleissocle:lem}.
    \end{proof}

    \begin{remark}\label{ref:orthogonaltoideal:sremark}
        Even though the choice of splitting and $\pi$ clearly affects the
        pairing, we
        see that for any ideal $I$ the orthogonal
        complement of $I$ is the annihilator $\ann{I}$, in particular it does not depend on these choices.
    \end{remark}

    \subsection{Local Hilbert function of a Gorenstein
    algebra}\label{ref:localhilbertfunction:subsec}

    In this and next subsections we follow a foundational paper of the theory
    of zero-dimensional Gorenstein $k$-algebras by Iarrobino \cite{ia94}. The
    idea is to exploit the perfect pairing from Proposition
    \ref{ref:ppairingonGorenstein:prop} to gain information about the powers
    of the maximal ideal and thus about the Hilbert function. The most
    important result is the decomposition of the Hilbert function, Theorem
    \ref{ref:iarrobinoHfdecomposition:thm}.

    \begin{defn}
        Let $(A, \Dmm, k)$ be a local ring. The \emph{local Hilbert
        function} of $A$ is defined by
        \[
        h_A: \Zzero \ni n \longmapsto \dimk \Dmm^n/\Dmm^{n+1}\in \Zzero.
        \]
        If $A$ is a finite $k$-algebra, then $h(n) = 0$ for $n\gg 0$ and we
        will identify the function with the vector of its nonzero values.
    \end{defn}

    Note that the local ring $(A, \Dmm, k)$ has a natural filtration by powers of
    maximal ideal $\Dmm$ and that the local Hilbert function is computed
    from $\gr \DAA$ with respect to this filtration.
    \def\Dmmperp#1{\left(\Dmm^{#1}\right)^{\perp}}%

    Let us denote $\Dmmperp{m} := \ann{\Dmm^{m}}$ for any $m\geq 0$ and adopt the
     convention that non-positive powers of ideals are equal to the whole ring.
    A direct consequence of Proposition~\ref{ref:ppairingonGorenstein:prop}
    with Remark~\ref{ref:orthogonaltoideal:sremark} is:
    \begin{cor}\label{ref:loewyHilbertfunc:cor}
        Let $(A, \Dmm, k)$ be a finite local Gorenstein
        $k$-algebra. Then for every $m\geq 0$ we have
        \[
        h_A(m) = \dimk
        \frac{\Dmmperp{m+1}}{\Dmmperp{m}}.
        \]
    \end{cor}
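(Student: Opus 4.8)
The plan is to reduce the statement entirely to the perfect pairing $m: A\times A\to k$ furnished by Proposition~\ref{ref:ppairingonGorenstein:prop}, after which the corollary becomes a bookkeeping exercise in the linear algebra of orthogonal complements. First I would recall, via Remark~\ref{ref:orthogonaltoideal:sremark}, that for any ideal $I\ideal A$ the orthogonal complement of $I$ with respect to $m$ equals $\ann{I}$ and is independent of the chosen splitting and projection. In particular the subspace $\Dmmperp{j}=\ann{\Dmm^j}$ is genuinely the orthogonal complement of $\Dmm^{j}\subseteq A$ for the pairing $m$, which is the only fact we need to import from the Gorenstein hypothesis.

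Next I would invoke the standard behaviour of a perfect pairing on a finite-dimensional $k$-vector space: for every $k$-subspace $W\subseteq A$ one has $\dimk W^{\perp}=\dimk A-\dimk W$, and passing to orthogonal complements reverses inclusions. Applying the inclusion-reversal to the chain $\Dmm^{m+1}\subseteq \Dmm^{m}$ yields $\ann{\Dmm^{m}}=\Dmmperp{m}\subseteq \Dmmperp{m+1}=\ann{\Dmm^{m+1}}$, so the quotient $\Dmmperp{m+1}/\Dmmperp{m}$ makes sense, and the dimension formula then gives the desired identity by a direct computation:
\[
\dimk \frac{\Dmmperp{m+1}}{\Dmmperp{m}}
= \dimk \Dmmperp{m+1}-\dimk \Dmmperp{m}
= \pp{\dimk A-\dimk \Dmm^{m+1}}-\pp{\dimk A-\dimk \Dmm^{m}}
= \dimk \Dmm^{m}/\Dmm^{m+1}
= h_A(m).
\]
The terms $\dimk A$ cancel, leaving exactly the definition of the local Hilbert function.

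I do not expect a genuine obstacle here: all the substance is carried by Proposition~\ref{ref:ppairingonGorenstein:prop}, and once the pairing is in hand the argument is forced. The only point demanding a little care is the identification of $\Dmmperp{j}$ as an orthogonal complement (so that the dimension formula for perfect pairings applies) together with keeping the inclusions and subtractions in the right order; the convention that non-positive powers of $\Dmm$ equal $A$ also makes the boundary case $m=0$ behave correctly.
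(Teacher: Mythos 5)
Your proof is correct and follows essentially the same route as the paper: both rest on the perfect pairing of Proposition~\ref{ref:ppairingonGorenstein:prop} together with Remark~\ref{ref:orthogonaltoideal:sremark} identifying $\Dmmperp{j}$ as the orthogonal complement of $\Dmm^{j}$. The paper merely packages the same dimension count as the duality isomorphism $\pp{\Dmm^m/\Dmm^{m+1}}^* \simeq \Dmmperp{m+1}/\Dmmperp{m}$, which your explicit subtraction of dimensions unwinds.
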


    \begin{proof}
        Indeed, fixing any pairing as in Proposition
        \ref{ref:ppairingonGorenstein:prop} we see that
        \[
        \pp{\frac{\Dmm^m}{\Dmm^{m+1}}}^*  \simeq
        \frac{\Dmmperp{m+1}}{\Dmmperp{m}}.\qedhere
        \]
    \end{proof}

    \def\Dhd#1#2{\Delta_{#1}\pp{#2}}
    \def\Dhdvect#1{\Delta_{#1}}
    \def\iaq#1{Q_{#1}}
    \def\iac#1{A_{#1}}
    Iarrobino noticed that a Gorenstein  algebra satisfies strong symmetry conditions
    on the local Hilbert function:
     \begin{thm}[Hilbert function decomposition]\label{ref:iarrobinoHfdecomposition:thm}
        Suppose that $(A, \Dmm, k)$ is a finite local Gorenstein
        $k$-algebra with socle degree (see~\ref{ref:Gorensteinsocledegree:def}) equal to $j$.
        Define
        \def\iainter#1#2{\Dmm^{#1}\cap \Dmmperp{#2}}%
        \def\iasum#1#2{\Dmm^{#1} + \Dmmperp{#2}}%
        \def\iacomp#1#2{\frac{\iainter{#1}{#2}}{\left( \iainter{#1}{#2}
        \right) \cap \left(\iasum{#1+1}{#2-1}\right)}}%
        \def\iacompbis#1#2{\frac{\iainter{#1}{#2}}{\iainter{#1}{#2-1} +
        \iainter{#1+1}{#2}}}%
        \[
        \iac{m, n} := \iainter{m}{n}\mbox{ and } \iaq{m, n} :=
        \frac{\iac{m, n}}{\iac{m+1, n} + \iac{m, n-1}}\mbox{ for all } 0\leq
        m, n\leq j+1.
        \]
        Denote
        \[\Dhd{A, s}{t} = \Dhd{s}{t} := 
        \begin{cases}\dimk \iaq{t, j+1 -
        (s+t)}\quad &\mbox{for}\quad
        0\leq s\leq j,\ 0\leq t\leq j-s,\\ 0 &\mbox{ otherwise}\end{cases}\]
        and call this the \emph{Hilbert function decomposition} with rows $\Dhdvect{s}$. It is
        convenient to note
        that $\dimk \iaq{m, n} = \Dhd{j+1 - (m+n)}{m}$.
        \begin{enumerate}
            \item $\iaq{m, 0} = \iaq{j+1, m} = 0$ for all $m$.
                If $m < j+1$, then $\iaq{0, m} = 0$.
            \item If $m + n > j + 1$, then $\iac{m, n} = \iac{m, n-1}$, so
                $\iaq{m, n} = 0$.
            \item\label{item:ppairing} Fix any pairing $A \times A \to k$ defined as in Proposition
                \ref{ref:ppairingonGorenstein:prop}.
                It induces a perfect pairing
                \[
                \iaq{m, n} \times \iaq{n-1, m+1} \to k,
                \]
                in particular $\Dhd{s}{t} = \Dhd{s}{(j-s)-t)}$, we say that $\Dhdvect{s}$ is symmetric with respect to $\frac{j-s}{2}$.
            \item For $j\geq a\geq 0$ denote $h_a(t) := \sum_{i=0}^{a}
                \Dhd{i}{t}$.
                The quotient
                \[\frac{\gr \DAA}{\DD{Ca}{C_a} = \bigoplus_{0\leq m\leq j}
                \DCa^{m}}\] has Hilbert
                function $h_a$, where $\DCa^m \subseteq \Dmm^m/\Dmm^{m+1}$ is the
                image of $\iac{m, j - (m+a)} = \iainter{m}{j - (m+a)}$.
            \item The local Hilbert function $h_A$ satisfies $h_A(t) = 0$
                for $t
                > j$ and
                \[
                h_A(t) = \sum_{i = 0}^{j-t} \Dhd{i}{t}\quad
                \mbox{for  } 0\leq t\leq j.
                \]
        \end{enumerate}
    \end{thm}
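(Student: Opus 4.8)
The plan is to fix, once and for all, a perfect pairing $A\times A\to k$, $(a,b)\mapsto\pi(ab)$, as in Proposition~\ref{ref:ppairingonGorenstein:prop}, and to run the whole argument through the orthogonality calculus it provides. For a $k$-subspace $U\subseteq A$ write $U^{\perp}$ for its orthogonal complement. Since $A$ is finite over $k$ and the pairing is perfect, I will freely use $\dimk U+\dimk U^{\perp}=\dimk A$, $(U^{\perp})^{\perp}=U$, $(U+W)^{\perp}=U^{\perp}\cap W^{\perp}$ and $(U\cap W)^{\perp}=U^{\perp}+W^{\perp}$. By Remark~\ref{ref:orthogonaltoideal:sremark} the complement of an ideal is its annihilator, so $\Dmmperp{n}=\ann{\Dmm^{n}}$ agrees with the notation in force, with boundary values $\Dmmperp{0}=\ann{A}=0$ and $\Dmmperp{j+1}=\ann{0}=A$. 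Expanding $\iac{m,n}=\Dmm^{m}\cap\Dmmperp{n}$ with perp calculus gives $\iac{m,n}^{\perp}=\Dmmperp{m}+\Dmm^{n}$, a formula I will reuse.

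Parts (1) and (2) are then direct containments. Since $\iac{m,0}=\Dmm^{m}\cap\Dmmperp{0}=0$ and $\iac{j+1,m}\subseteq\Dmm^{j+1}=0$, we get $\iaq{m,0}=\iaq{j+1,m}=0$. For $\iaq{0,m}$ with $m\le j$ I note that $\Dmm^{m}\neq0$ forces $\ann{\Dmm^{m}}$ to contain no unit, hence $\Dmmperp{m}\subseteq\Dmm$; thus $\iac{0,m}=\Dmmperp{m}=\Dmm\cap\Dmmperp{m}=\iac{1,m}$ and $\iaq{0,m}=0$. For part (2), if $m+n>j+1$ and $x\in\iac{m,n}$ then $x\Dmm^{n-1}\subseteq\Dmm^{m+n-1}=0$, so $x\in\Dmmperp{n-1}$ and $\iac{m,n}=\iac{m,n-1}$, whence $\iaq{m,n}=0$.

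The substance is part (3). I claim the pairing descends to $\iaq{m,n}\times\iaq{n-1,m+1}\to k$. Well-definedness is four vanishings: for instance $a\in\iac{m+1,n}$, $b\in\iac{n-1,m+1}$ give $ab=0$ because $b\in\ann{\Dmm^{m+1}}$ and $a\in\Dmm^{m+1}$; the remaining three are identical, each time one factor annihilating a power of $\Dmm$ containing the other. Because everything is finite over $k$, perfectness will follow from nondegeneracy on each side, and by the involution $(m,n)\mapsto(n-1,m+1)$ it suffices to treat the left. Left-nondegeneracy is the identity
\[
\iac{m,n}\cap\iac{n-1,m+1}^{\perp}=\iac{m+1,n}+\iac{m,n-1}.
\]
Using $\iac{n-1,m+1}^{\perp}=\Dmmperp{n-1}+\Dmm^{m+1}$ and setting $P=\Dmm^{m}\supseteq\Dmm^{m+1}=P'$, $R=\Dmmperp{n}\supseteq\Dmmperp{n-1}=R'$, it reads $P\cap R\cap(R'+P')=(P'\cap R)+(P\cap R')$, which I would extract from two applications of the modular law ($P\cap(R'+P')=(P\cap R')+P'$, then intersect with $R$). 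Finally, perfectness gives $\dimk\iaq{m,n}=\dimk\iaq{n-1,m+1}$; inserting the bookkeeping identity $\dimk\iaq{m,n}=\Dhd{j+1-(m+n)}{m}$ and putting $s=j+1-(m+n)$, $t=m$ turns this into $\Dhd{s}{t}=\Dhd{s}{(j-s)-t}$, the asserted symmetry of $\Dhdvect{s}$.

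For parts (5) and (4) I would filter each graded piece $\Dmm^{m}/\Dmm^{m+1}$ of $\gr\DAA$ by the images of $\iac{m,n}$ as $n$ grows. Since $\iac{m+1,n}\subseteq\Dmm^{m+1}$, the image of $\iac{m,n}$ is $\iac{m,n}/\iac{m+1,n}$ and the successive quotient of this increasing filtration is exactly $\iaq{m,n}$; as $\iac{m,j+1}=\Dmm^{m}$ the filtration exhausts $\Dmm^{m}/\Dmm^{m+1}$, so $h_A(m)=\sum_{n}\dimk\iaq{m,n}=\sum_{i=0}^{j-m}\Dhd{i}{m}$ after re-indexing by $s=j+1-(m+n)$ (the vanishing $h_A(t)=0$ for $t>j$ being immediate from $\Dmm^{j+1}=0$), which is part (5). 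For part (4) I first check $C_a$ is a homogeneous ideal: if $x\in\Dmm$ and $y\in\iac{m,j-(m+a)}$ then $xy\in\Dmm^{m+1}$ and $xy\,\Dmm^{j-(m+1)-a}\subseteq y\,\Dmm^{j-(m+a)}=0$, so $xy\in\iac{m+1,j-(m+1)-a}$, whose image is $C_a^{m+1}$. The Hilbert function of $\gr\DAA/C_a$ in degree $m$ is then $h_A(m)-\dimk C_a^{m}=\sum_{n>j-(m+a)}\dimk\iaq{m,n}=\sum_{i=0}^{a}\Dhd{i}{m}=h_a(m)$. The one genuinely delicate point is the left-nondegeneracy identity in part (3): the whole theorem hinges on matching annihilators with orthogonal complements precisely enough that the lattice equality falls out of the modular law, while the rest is perp calculus and the exhaustive filtration of $\Dmm^{m}/\Dmm^{m+1}$.
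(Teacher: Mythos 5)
Your proposal is correct and follows essentially the same route as the paper: parts (1)--(2) by the same containments, part (3) via the perfect pairing of Proposition~\ref{ref:ppairingonGorenstein:prop} together with the modular law (the paper phrases this as an isomorphism $Q_{n-1,m+1}\simeq Q_{m,n}^{*}$ obtained by orthogonal-complement calculus, which is just the dual formulation of your descended pairing and rests on the identical modularity identity), and parts (4)--(5) by the same filtration of $\mathfrak{m}^{m}/\mathfrak{m}^{m+1}$ with successive quotients $Q_{m,n}$. I see no gaps.
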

    \begin{remark}
        Some of the claims of the theorem may be summarized by saying that the
        following tables are respectively ``symmetric up to taking duals''
        (see Point \ref{item:ppairing} of the theorem) and symmetric with respect to their anti-diagonals:
        \begin{center}%
%            \vspace*{-2em}%
                \begin{tikzpicture}%
                    \matrix [matrix of math nodes, nodes = {node style ge},,column
                    sep=0 mm, row sep=0mm, ampersand replacement=\&]%
                    {%
                    \&\iaq{0, j+1} \& 0 \& \dots \& 0 \& \node(qrighttop){0};\&\phantom{mmm}\&\&\Dhd{0}{0} \& 0 \& \dots \& 0 \& \node (righttop){0};\\
                    \& \iaq{0, j} \& \iaq{1, j} \& 0  \& \dots \& 0\&\&\&0 \& \Dhd{0}{1}  \& 0  \& \dots \& 0\\
                    \&\dots\&\dots\&\dots\&\dots\&\dots\&\&\&\dots\&\dots\&\dots\&\dots\&\dots\\
                    \&\iaq{0, 2} \& \iaq{1, 2} \& \dots  \& \iaq{j-1, 2} \& 0\&\&\&0  \& \Dhd{j-2}{1} \& \dots  \& \Dhd{0}{j-1}  \& 0\\
                    \&\node (qleftbottom){\iaq{0, 1}}; \& \iaq{1, 1} \& \dots\&  \iaq{j-1, 1} \& \iaq{j, 1}\&\&
                    + \&\node (leftbottom){0}; \& 0 \& \dots \&  0 \& \node(rightbottom){\Dhd{0}{j}};\\
                    \&A/\Dmm\&\Dmm/\Dmm^2\&\dots\&\Dmm^{j-1}/\Dmm^j\&\Dmm^j\&\&
                    \& \node(lbb){h_A(0)}; \& h_A(1) \& \dots \& h_A(j-1) \& \node(rbb){h_A(j)};\\
                    };
                    \draw [dashed, ultra thick, orange] (qleftbottom) -- (qrighttop);
                    \draw [dashed, ultra thick, orange] (leftbottom) -- (righttop);
                    \draw ($(leftbottom)!0.5!(lbb) - (1.2, 0) $ ) -- ($
                    (rightbottom)!0.5!(rbb) + (0.8, 0) $);
                \end{tikzpicture}
%                \begin{tikzpicture}%
%                    \matrix [matrix of math nodes, nodes = {node style ge},,column
%                    sep=0 mm, row sep=0mm, ampersand replacement=\&]%
%                    {%
%                    \&\Dhd{0}{0} \& 0 \& \dots \& 0 \& \node (righttop){0};\\
%                    \&0 \& \Dhd{0}{1}  \& 0  \& \dots \& 0\\
%                    \&\dots\&\dots\&\dots\&\dots\&\dots\\
%                    \&0  \& \Dhd{j-2}{1} \& \dots  \& \Dhd{0}{j-1}  \& 0\\
%                    + \&\node (leftbottom){0}; \& 0 \& \dots \&  0 \& \node(rightbottom){\Dhd{0}{j}};\\
%                    \& \node(lbb){h_A(0)}; \& h_A(1) \& \dots \& h_A(j-1) \&
%                    \node(rbb){h_A(j)};\\
%                    };
%                    \draw [dashed, ultra thick, orange] (leftbottom) -- (righttop);
%                    \draw ($(leftbottom)!0.5!(lbb) - (1.2, 0) $ ) -- ($
%                    (rightbottom)!0.5!(rbb) + (0.8, 0) $);
%                \end{tikzpicture}
            \end{center}
        \end{remark}

        \begin{proof}[Proof of Theorem~\ref{ref:iarrobinoHfdecomposition:thm}]
        \def\iainter#1#2{\Dmm^{#1}\cap \Dmmperp{#2}}%
        \def\iasum#1#2{\Dmm^{#1} + \Dmmperp{#2}}%
        \begin{enumerate}
            \item Obviously $\iac{m, 0} = \iac{m, -1} = \Dmm^m$ and
                $\Dmm^{j+1} = 0$, this proves the first equality. For the
                second equality, note that if $m < j+1$, then $\Dmm^{m}
                \neq 0$  and so $\Dmmperp{m} \subseteq \Dmm$.
            \item By assumption $\Dmm^{m+n-1} = 0$, thus  $\iainter{m}{n} =
                \iainter{m}{n-1} = \Dmm^m$.
            \item Recall that $k$-modules are \emph{modular} i.e. if $M, N, P$
                are $k$-modules and $M \subseteq P$ then $(M + N)\cap P = M +
                N\cap P$. In particular, as for any $n, m$ we have $\Dmm^{n} \subseteq
                \Dmm^{n-1}$ and $\Dmmperp{m} \subseteq \Dmmperp{m+1}$ it
                follows that
                \begin{equation}\label{eqn:modularity}
                    \begin{aligned}
                        \pp{\iasum{n+1}{m-1}} \cap \Dmm^{n} = \Dmm^{n+1} +
                        \iainter{n}{m-1},\\\left(\Dmm^{n+1} +
                        \iainter{n}{m-1}\right) \cap \Dmmperp{m} = \iainter{n+1}{m} +
                        \iainter{n}{m-1}.
                    \end{aligned}
                \end{equation}

                Below, for an $A$-module $M$ by $M^*$ we denote
                $\Homt{k}{M}{k}$ and for $N \subseteq M$ by $N^{\perp}
                \subseteq M^*$ we denote functionals which are zero on $N$.
                The fixed perfect pairing $A\times A\to k$ gives an isomorphism of
                $k$-modules $A
                \simeq A^*$. From Remark~\ref{ref:orthogonaltoideal:sremark}
                it follows that
                \[\pp{\iainter{n-1}{m+1}}^{\perp}  \simeq
                \iasum{m+1}{n-1},\quad\mbox{and}\quad
                \pp{\iasum{n}{m}}^{\perp}  \simeq \iainter{m}{n},\mbox{ thus }\]
                \begin{align*}
                    \pp{\frac{\iainter{n-1}{m+1}}{\pp{\iainter{n-1}{m+1}}
                    \cap \pp{\iasum{n}{m}}}}^*  \simeq
                    \frac{\pp{\iainter{n-1}{m+1}}^{\perp}
                    +
                    \pp{\iasum{n}{m}}^{\perp}}{\pp{\iainter{n-1}{m+1}}^{\perp}}
                     \simeq \\
                      \simeq \frac{\pp{\iasum{m+1}{n-1}} +
                     \pp{\iainter{m}{n}}}{\iasum{m+1}{n-1}}  \simeq
                     \frac{\iainter{m}{n}}{\iainter{m}{n}\cap
                     \pp{\iasum{m+1}{n-1}}},
                \end{align*}
                which is, by Equation \eqref{eqn:modularity}, equivalent to
                $\iaq{n-1, m+1}  \simeq \iaq{m, n}^*$. Now $\Dhd{s}{t} = \dimk \iaq{t, j+1-(s+t)} =
                \iaq{j-(s+t), t+1} = \Dhd{s}{j-(s+t)}$.

            \item\label{item:subquotients} For any $m\in \Zzero$ we have $\Dmm\cdot \iac{m, j - (m+a)} \subseteq \iac{m+1, j
                - (m+1+a)}$ which proves that $\DCa$ is an ideal in
                $\gr \DAA$.
                Let us fix $e := j - (m+a)$ and look at the filtration
                \[
                \iac{m, e} \subseteq \iac{m, e+1} \subseteq \iac{m, e+2} \subseteq
                \dots \iac{m, j} \subseteq \iac{m, j+1} = \Dmm^m.
                \]
                Since $\Dmm^{m+1} \cap \iac{m,n} = \iac{m+1,n}$, this
                gives a filtration
                \[
                0 = \frac{\iac{m, e}}{\iac{m+1, e} + \iac{m, e}} \subseteq
                \frac{\iac{m, e+1}}{\iac{m+1, e+1} + \iac{m, e}} \subseteq \dots \subseteq
                \frac{\iac{m, j+1}}{\iac{m+1, j+1} + \iac{m, e}} =
                \frac{\Dmm^m}{\Dmm^{m+1} + \iainter{m}{e}},
                \]
                with associated graded $\iaq{m, e+1} \oplus \iaq{m, e+2} \oplus
                \dots \iaq{m, j+1}$. Since the modules are finitely generated
                over $k$ from Corollary~\ref{ref:gradedhassamelengthfinite:cor} it follows that
                \[
                h_{\gr \DAA/\DCa}(m) = \sum_{e+1\leq n} \iaq{m, n} =
                \sum_{n=e+1}^{j+1-m}  \iaq{m, n} = \sum_{i=0}^{j+1-m-(e+1)}
                \Dhd{i}{m} = \sum_{i=0}^{a} \Dhd{i}{m}.
                \]
            \item The ideal $\iac{m, j - (m+j)} = \iac{m, -m}$ is zero; thus $I_j = 0$ and $A = A/I_{j}$. Now
                take $a = j$ in Point~\ref{item:subquotients}.\qedhere
        \end{enumerate}
    \end{proof}

    \begin{remark}
        It follows from the proof that $\Dhdvect{j} = (0)$ and $\Dhdvect{j-1}
        = (0, 0)$ for any algebra, so we will ignore these vectors. The
        maximal $a$ such that $\Dhdvect{a}$ may have a non-zero entry is $a =
        j-2$ and indeed $\Dhdvect{j-2} = (0, *, 0)$ contains
        useful information about the algebra, as we will see in Proposition~\ref{ref:squares:prop}.
    \end{remark}

    \begin{example}
        Below we write the $\iaq{m, n}$ table for $j = 3$, which is the smallest
        nontrivial case.

            \def\iainter#1#2{\Dmm^{#1}\cap \Dmmperp{#2}}%
            \def\iainterfst#1{\Dmm^{#1}\cap \Dmmperp{}}%
            \def\iaintersnd#1{\Dmm\cap \Dmmperp{#1}}%
            \def\iainterzero{\Dmm\cap \Dmmperp{}}%
            \begin{center}{%
            \begin{tikzpicture}
                \tikzstyle{every node}=[font=\large]
                \matrix [matrix of math nodes, column
                sep=0 mm, row sep=0mm, ampersand replacement=\&]
                {
                \dimk \frac{A}{\Dmm} \& 0 \& 0  \& \node
                (righttop){0};\\
                0 \& \dimk \frac{\iaintersnd{3}}{\iainter{2}{3} +
                \iaintersnd{2}} \& 0 \& 0\\
                0 \& \dimk \frac{\iaintersnd{2}}{\iainter{2}{2} +
                \iainterzero} \& \dimk \frac{\iainter{2}{2}}{\iainter{3}{2} +
                \iainterfst{2}} \& 0\\
                0 \& 0 \& 0 \& \dimk \iainterfst{3}\\
                };
            \end{tikzpicture}}\end{center}
            Hence the Hilbert function has decomposition
            $\Dhdvect{0} = (1, h(2), h(2), 1),\ \Dhdvect{1} = (0, h(1) - h(2),
            0)$ thus satisfying $h(1) \geq
            h(2)$.
    \end{example}

    \begin{example}
        From Point~\ref{item:subquotients} of Theorem
        \ref{ref:iarrobinoHfdecomposition:thm} it follows that there does not
        exist a local Gorenstein algebra $A$ with Hilbert function decomposition
            \[\begin{matrix}
                \Dhdvect{0}=&(1,& 1,& 1,& 1, & 1, & 1)\\
                \Dhdvect{1}=&(0,& 0,& 1, & 0, & 0)\\
                \Dhdvect{2}=&(0,& 0,& 0, &0)\\
                \Dhdvect{3}=&(0,& 1,& 0).
            \end{matrix}\]
        Indeed $v = \Dhdvect{0} + \Dhdvect{1} = (1, 1, 2, 1, 1, 1)$ should be a
        local Hilbert function of some finite $k$-algebra $B$ and this is
        impossible because $v(1) = 1$ and $v(2) > 1$.
        Note that $h_A = (1, 2, 2, 1, 1, 1)$ seems possible to obtain and
        indeed there exist Gorenstein algebras with such function and
        decomposition
            \[\begin{matrix}
                \Dhdvect{0}=&(1,& 1,& 1,& 1, & 1, & 1)\\
                \Dhdvect{1}=&(0,& 0,& 0, & 0, & 0)\\
                \Dhdvect{2}=&(0,& 1,& 1, &0)\\
                \Dhdvect{3}=&(0,& 0,& 0).
            \end{matrix}\]
    \end{example}

    \subsection{Relations between the Hilbert function and the dual generator}\label{ref:dualgen:section}

    Macaulay's inverse systems and the local Hilbert function decomposition provide us
    with multitude of information about the local finite
    Gorenstein algebra. In this section we will draw various conclusions
    about the structure of the algebra from the knowledge of its dual socle generator. Understanding a Gorenstein algebra $A$ by
    analysis of the dual socle generator $f\in \DSS$ of a chosen presentation
    $A  \simeq \DTT/f^{\perp}$ is elementary. The main problem of this method is embarras de richesses coming from
    the fact that we have many choices of $f$. To avoid it we classify polynomials, provide
    normal forms etc. The most important results in this direction are
    Lemma~\ref{ref:lowdegreesindecomposition:lem} and Theorem
    \ref{ref:standardform:thm}.

    It is important to understand that most complications arise from the fact
    that the dual socle generator is in general not homogeneous. For the
    homogeneous case see
    Lemma~\ref{ref:gradedhavesymmetrichilbertfunction:lem}.

    Fix $\DSS = \DSring$ and $\DTT = \DTring$ as defined in the Theorem
    \ref{ref:Macaulaycorrcor:thm}. By $\DSS^{m}$ we denote the homogeneous
    polynomials of total degree $m$ is $\DSS$. Fix an element $f\in \DSS$ and a local finite Gorenstein
    $k$-algebra $(A, \Dmm, k)$ isomorphic to $\DTT/f^{\perp}$. Let $j$ be the
    socle degree of $A$, it is equal to the degree of the polynomial $f$.
    It is important to know that, although using Theorem
    \ref{ref:Macaulaycorrcor:thm} we are bound to consider only
    characteristic zero, the facts from this section remain true if we replace
    $\DTring$ by the divided powers ring and use Theorem
    \ref{ref:Macaulaycorr:thm}.%
    \def\iatf#1#2{(\DTT f)_{#1}^{#2}}%

        Define $\iatf{n}{}$ to be the submodule of $\DTT f$ spanned by elements
        of degree less than $n$ and $\iatf{}{m} := \Dmm_{\DTT}^{m}f$, finally
        put $\iatf{n}{m} := \iatf{n}{}\cap \iatf{}{m}$.

    \begin{prop}\label{ref:hilbertfunctionfrompolynomial:prop}
        \def\iaq#1{Q_{#1}}
        We have an isomorphism of $\DTT$-modules
        \[
        i:A \to \DTT f \subseteq \DSS,\mbox{ defined by } \partial\mapsto
        \partial\hook f.
        \]
        The submodule $\iac{m, n}$, defined in Theorem~\ref{ref:iarrobinoHfdecomposition:thm}, maps to $\iatf{n}{m}$ under this
        isomorphism and thus the $k$-rank of $\iaq{m, n}$, also defined in
        Theorem~\ref{ref:iarrobinoHfdecomposition:thm}, is equal to
        \[
        \dimk \frac{\iatf{n}{m}}{\iatf{n-1}{m} + \iatf{n}{m+1}} = \dimk
        \iatf{n}{m} - \dimk \left(  \iatf{n-1}{m} + \iatf{n}{m+1}\right).
        \]
        Furthermore, the value of the Hilbert function $h_A(t)$ is equal to
        both $\dimk \iatf{}{t} - \dimk \iatf{}{t+1}$ and $\dimk \iatf{t+1}{} -
        \dimk
        \iatf{t}{}$.
    \end{prop}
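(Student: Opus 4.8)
The plan is to transport everything through the single isomorphism $i$ and then read off the remaining assertions from results already proved for $A$. First I would construct $i$: the assignment $\partial \mapsto \partial \hook f$ defines a surjective $\DTT$-module homomorphism $\DTT \onto \DTT f$, and by the very definition of $f^{\perp} = \annn{\DTT}{f}$ its kernel is $f^{\perp}$. Hence it descends to a $\DTT$-module (equivalently, $A$-module) isomorphism $i : A = \DTT/f^{\perp} \to \DTT f$; this is exactly the Macaulay correspondence of Theorem~\ref{ref:Macaulaycorrcor:thm} applied to the cyclic module $\DTT f$, so Part~1 is immediate.

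Next I would identify the images under $i$ of the two families of submodules entering the definition of $\iaq{m, n}$ in Theorem~\ref{ref:iarrobinoHfdecomposition:thm}. Since $\Dmm \subseteq A$ is the image of $\Dmm_{\DTT}$, the power $\Dmm^m$ is the image of $\Dmm_{\DTT}^m$, so $i(\Dmm^m) = \Dmm_{\DTT}^m \hook f = \iatf{}{m}$. For the annihilators I would argue that, because $i$ intertwines the $\DTT$-actions, an element $g = i(a)$ lies in $i(\Dmmperp{n})$ exactly when $\Dmm_{\DTT}^n \hook g = 0$. By Proposition~\ref{ref:computedualofAn:prop} the operator $y_i$ acts as $\partial/\partial x_i$, so $\Dmm_{\DTT}^n \hook g = 0$ says precisely that every order-$n$ partial derivative of $g$ vanishes, i.e.\ that $\deg g < n$; as the elements of $\DTT f$ of degree $< n$ are precisely $\iatf{n}{}$, this gives $i(\Dmmperp{n}) = \iatf{n}{}$.

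With these two identifications in hand the rest is bookkeeping. Because $i$ is an isomorphism it preserves intersections, so $i(\iac{m, n}) = i(\Dmm^m \cap \Dmmperp{n}) = \iatf{}{m} \cap \iatf{n}{} = \iatf{n}{m}$, which is the asserted image of $\iac{m, n}$ and proves Part~2. Applying $i$ to the numerator and denominator of $\iaq{m, n} = \iac{m, n}/(\iac{m+1, n} + \iac{m, n-1})$ then yields the displayed rank formula for $\iaq{m, n}$. For the Hilbert function, the identification $i(\Dmm^t) = \iatf{}{t}$ gives at once $h_A(t) = \dimk \Dmm^t/\Dmm^{t+1} = \dimk \iatf{}{t} - \dimk \iatf{}{t+1}$, while feeding $i(\Dmmperp{t}) = \iatf{t}{}$ into Corollary~\ref{ref:loewyHilbertfunc:cor} gives $h_A(t) = \dimk \Dmmperp{t+1}/\Dmmperp{t} = \dimk \iatf{t+1}{} - \dimk \iatf{t}{}$.

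The only step requiring genuine care is the identification $i(\Dmmperp{n}) = \iatf{n}{}$, and within it the equivalence ``$g$ is annihilated by all order-$n$ operators'' $\iff$ ``$\deg g < n$''. The forward direction relies on characteristic zero: a nonzero top-degree monomial of $g$ of degree $\geq n$ must survive some $n$-fold differentiation, which can fail in positive characteristic (e.g.\ $x_i^p$ is killed by $\partial/\partial x_i$ when $\operatorname{char} k = p$). This is precisely the point at which one must either keep $\operatorname{char} k = 0$ or pass to the divided-power ring, as noted before the statement; every other step is transport of structure through the isomorphism $i$.
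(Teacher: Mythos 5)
Your argument is correct and is essentially the paper's own proof written out in full: the paper disposes of the claim on $A_{m,n}$ with the single remark that it ``follows directly from definitions'' and points to Corollary~\ref{ref:loewyHilbertfunc:cor} for the Hilbert-function formulas, and what you supply is precisely the expansion of those definitions --- identifying $i(\mathfrak{m}^m)$ and $i\left(\left(\mathfrak{m}^n\right)^{\perp}\right)$ with the two filtrations on $\DTT f$ and then transporting intersections, quotients and the corollary through the isomorphism $i$. Your closing remark on where characteristic zero is used (the equivalence between vanishing of all order-$n$ derivatives and $\deg g < n$) matches the paper's own caveat, stated just before the proposition, about replacing $\DTring$ by the divided powers ring in positive characteristic.
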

    \begin{proof}
        The claim on $\iac{m, n}$ follows directly from definitions. For the
        part concerned with the Hilbert function check Corollary~\ref{ref:loewyHilbertfunc:cor}. 
    \end{proof}

    \begin{example}
        Using Proposition~\ref{ref:hilbertfunctionfrompolynomial:prop} one can
        compute the Hilbert function of the apolar algebra $A$ of $x_1^5 +
        x_2^4 + x_3^4$.
        We have $\iatf{1}{} = k, \iatf{2}{} = k \oplus kx_1\oplus kx_2\oplus
        kx_3$ and so on, hence we compute $h_A = (1, 3, 3, 3, 1, 1)$.
    \end{example}

    It is important to see that computing $\dimk \iac{m, n}$ or $h_A$ via
    Proposition~\ref{ref:hilbertfunctionfrompolynomial:prop} we are mainly interested in the
    top degree forms of the derivatives.

    The situation is particularly easy when the dual socle generator is
    homogeneous.
    \begin{lem}\label{ref:gradedhavesymmetrichilbertfunction:lem}
        If $f$ is homogeneous then $\Dhd{m}{n} = 0$ for all $m > 0$ and $n$.
        In particular the local Hilbert function of $A$ is
        equal to $\Dhdvect{0}$.
    \end{lem}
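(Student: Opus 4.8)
The plan is to transport the whole computation to the polynomial side via Proposition~\ref{ref:hilbertfunctionfrompolynomial:prop} and then exploit the fact that, for homogeneous $f$, the module $\DTT f$ is a graded $k$-vector space. Concretely, that proposition gives $\dimk \iaq{m,n} = \dimk \iatf{n}{m}/\pp{\iatf{n-1}{m} + \iatf{n}{m+1}}$, while by definition $\Dhd{s}{t} = \dimk \iaq{t, j+1-(s+t)}$. Since for $s\geq 1$ the index satisfies $m+n = j+1-s \leq j$, it suffices to prove that $\iaq{m,n} = 0$ whenever $m+n\leq j$.

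First I would record the grading. If $f$ is homogeneous of degree $j$ and $\partial\in \DTT$ is homogeneous of degree $\ell$ as a differential operator, then $\partial\hook f$ is homogeneous of degree $j-\ell$ (or zero). Splitting an arbitrary $\partial$ into homogeneous components shows that every element of $\DTT f$ is a sum of homogeneous pieces, so $\DTT f$ is a graded $k$-vector space; write $G_d$ for its homogeneous component of degree $d$. With this in hand I would identify the two filtrations defining $\iatf{n}{m}$: the submodule $\iatf{n}{}$, spanned by elements of degree less than $n$, equals $\bigoplus_{d<n} G_d$, while $\iatf{}{m} = \Dmm_{\DTT}^{m}f$ consists exactly of the homogeneous pieces of degree at most $j-m$, i.e. $\iatf{}{m} = \bigoplus_{d\leq j-m} G_d$. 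The inclusion $\iatf{}{m}\subseteq \bigoplus_{d\leq j-m}G_d$ is clear; for the reverse one notes that a homogeneous degree-$d$ element equals $\partial_{j-d}\hook f$ with $\partial_{j-d}\in \Dmm_{\DTT}^{j-d}\subseteq \Dmm_{\DTT}^{m}$ as soon as $d\leq j-m$.

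The key step is then immediate bookkeeping. For $m+n\leq j$ we have $n\leq j-m$, so every degree $d<n$ occurring in $\iatf{n}{m} = \iatf{n}{}\cap \iatf{}{m}$ satisfies $d\leq n-1\leq j-(m+1)$; hence $\iatf{n}{m}\subseteq \iatf{}{m+1}$, and intersecting with $\iatf{n}{}$ gives $\iatf{n}{m} = \iatf{n}{m+1}$ (the reverse inclusion being trivial since $\Dmm_{\DTT}^{m+1}\subseteq \Dmm_{\DTT}^{m}$). Thus the numerator of the quotient computing $\dimk\iaq{m,n}$ already coincides with the term $\iatf{n}{m+1}$ of its denominator, forcing $\iaq{m,n}=0$. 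Specializing $m=t$, $n=j+1-(s+t)$ yields $\Dhd{s}{t}=0$ for all $s\geq 1$ and all $t$, and then Point~5 of Theorem~\ref{ref:iarrobinoHfdecomposition:thm} gives $h_A(t) = \sum_{i=0}^{j-t}\Dhd{i}{t} = \Dhd{0}{t}$, i.e. $h_A = \Dhdvect{0}$.

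The only genuinely delicate point is the double description of $\Dmm_{\DTT}^{m}f$ as the span of the top $j-m+1$ graded pieces of $\DTT f$: the inclusion into degrees $\leq j-m$ is formal, but the reverse inclusion is exactly where homogeneity of $f$ is used, since it is homogeneity that makes the order of differentiation and the drop in degree match perfectly. Everything else is direct-sum bookkeeping against the two filtrations.
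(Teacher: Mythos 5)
Your proof is correct, and it follows the route that the paper's own one-line proof gestures at: push everything through Proposition~\ref{ref:hilbertfunctionfrompolynomial:prop} and use that $\DTT f$ is a graded submodule of $\DSS$ when $f$ is homogeneous. But your write-up is more than a fleshed-out version of the paper's argument --- it actually repairs it. The paper justifies the lemma by asserting that $\iatf{n}{m}=0$ whenever $m+n<j+1$, and that claim is false as stated: writing $G_d$ for the degree-$d$ graded piece of $\DTT f$, one has $\iatf{n}{m}=\bigoplus_{d\leq \min(n-1,\,j-m)}G_d$, which always contains $G_0\neq 0$ (concretely, for $f=x_1^2$ the corresponding module is $\iac{1,1}=\Dmm\cap\ann{\Dmm}=(y_1^2)\neq 0$ inside $k\formal{y_1}/(y_1^3)$, with $m+n=2<j+1=3$). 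What is true, and what you prove, is the equality $\iatf{n}{m}=\iatf{n}{m+1}$ for $m+n\leq j$, which makes the numerator of the quotient computing $\dimk\iaq{m,n}$ coincide with a summand of its denominator and hence forces $\iaq{m,n}=0$; this is exactly what the lemma needs, since $\Dhd{s}{t}=\dimk\iaq{t,\,j+1-(s+t)}$ and $s\geq 1$ gives $m+n=j+1-s\leq j$. Your identification of the two filtrations with the grading is carried out correctly --- in particular the non-formal inclusion $\bigoplus_{d\leq j-m}G_d\subseteq \Dmm_{\DTT}^{m}f$, which is precisely where homogeneity of $f$ enters --- and the final appeal to Point~5 of Theorem~\ref{ref:iarrobinoHfdecomposition:thm} matches the paper. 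In short: same strategy, but your version is the one that should be recorded.
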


    \begin{proof}
        Since for $m, n$ such that $m + n < j+1$ we have  $\iatf{n}{m} = 0$
        the claim follows from
        Proposition~\ref{ref:hilbertfunctionfrompolynomial:prop}.
%        As $f$ is homogeneous we can choose a splitting and projection $\pi$
%        in Proposition~\ref{ref:ppairingonGorenstein:prop} in a homogeneous
%        way and deduce that $\Dmmperp{n} = \Dmm^{j+1-n}$, then the claim
%        follows from definition of $\Dhd{m}{n}$.
    \end{proof}

    \begin{remark}\label{ref:baddecompositions:remark}
        One might hope that, for every $a\geq 0$, if $f$ is a sum of polynomials of degrees at least
        $j-a$,
        then $\Dhdvect{b}$ has zero entries for all $b > a$. But this is not true.
        Take $f = x_1^4 - 12x_1^2x_2$, then $(y_1^2 + y_2)\hook f = -24x_2$
        and one may check that this is a non-zero element of
        $\iatf{2}{1}/\pp{\iatf{1}{1} + \iatf{2}{2}}$. In
        particular $\Dhdvect{2} \neq (0, 0, 0)$ and in fact the decomposition
        is
        \[\begin{matrix}
            \Dhdvect{0}=&(1,& 1,& 1,& 1, & 1)\\
            \Dhdvect{1}=&(0,& 0,& 0, & 0)\\
            \Dhdvect{2}=&(0,& 1,& 0).
        \end{matrix}\]
    \end{remark}

    Intuitively, low degree homogeneous terms of $f$ contribute
    only to $\iatf{n}{m}$ for which $m + n$ is small. The following
    lemma captures this intuition and says ``if you add low degree term, you
    change only $\Dhdvect{s}$ for $s\gg 0$''.
    \begin{lem}\label{ref:lowdegreesindecomposition:lem}
        \def\Dhdprim#1#2{\Delta_{#1}'\pp{#2}}
        Suppose that polynomials $f_1, f_2\in \DSS$ of degree $j$ are such that
        $\deg (f_1 - f_2) \leq j - a$. Denote $\Delta =
        \Delta_{\DTT/f_1^{\perp}}$ and $\Delta' = \Delta_{\DTT/f_2^{\perp}}$, then
        \[\Dhd{m}{n} = \Dhdprim{m}{n}\mbox{
        for all } m\leq a-1\mbox{ and all } n.\]
    \end{lem}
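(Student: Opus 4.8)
The plan is to evaluate both decompositions through Proposition~\ref{ref:hilbertfunctionfrompolynomial:prop} and match them term by term. Write $g := f_1 - f_2$, so $\deg g \le j-a$, and for a degree-$j$ polynomial $h$ and integers $p,q$ let $Q_{p,q}(h)$ denote the space $Q_{p,q}$ attached to the apolar algebra $\DTT/h^{\perp}$. By Proposition~\ref{ref:hilbertfunctionfrompolynomial:prop}, $\dimk Q_{p,q}(h)$ is the rank of $(\DTT h)_{q}^{p}\big/\big((\DTT h)_{q-1}^{p}+(\DTT h)_{q}^{p+1}\big)$, and by the definition of the decomposition $\Dhd{m}{n}=\dimk Q_{n,\,j+1-(m+n)}(f_1)$, with the analogous formula for $\Delta'$ and $f_2$. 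Hence it suffices to prove $\dimk Q_{p,q}(f_1)=\dimk Q_{p,q}(f_2)$ for $p=n$ and $q=j+1-(m+n)$ whenever $m\le a-1$; for $n$ outside the range $0\le n\le j-m$ both entries vanish by the ``otherwise'' clause of Theorem~\ref{ref:iarrobinoHfdecomposition:thm}.

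The mechanism is to pass to leading forms in the single relevant degree $d:=q-1=j-(m+n)\ge 0$. The governing estimate is that for every operator $\partial\in\Dmm_{\DTT}^{p}$ (order at least $p=n$) one has $\deg(\partial\hook g)\le (j-a)-n<d$, the strict inequality holding precisely because $m\le a-1$ (when $d=0$ this reads $\partial\hook g=0$). Let $\pi_d\colon\DSS\to\DSS^{d}$ be the projection onto the homogeneous component of degree $d$. The estimate has two consequences: first, the subspace $S:=\{\partial\in\Dmm_{\DTT}^{p}\ :\ \deg(\partial\hook f_1)\le d\}$ is unchanged when $f_1$ is replaced by $f_2$, since $\partial\hook f_1$ and $\partial\hook f_2$ differ only in degrees below $d$; second, $\pi_d(\partial\hook f_1)=\pi_d(\partial\hook f_2)$ for every $\partial\in S$. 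In other words the ``leading coefficient'' map $T_p\colon S\to\DSS^{d}$, $\partial\mapsto\pi_d(\partial\hook f_i)$, is literally one and the same map for $i=1$ and $i=2$, and the same holds for the analogous map $T_{p+1}$ defined on operators of order at least $p+1$.

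It remains to identify $Q_{p,q}(h)$ with $\im T_p\big/\im T_{p+1}$. Applying $\pi_d$ to $(\DTT h)_{q}^{p}$ has kernel exactly $(\DTT h)_{q-1}^{p}$ (the elements of degree below $d$) and image $\im T_p$, while $(\DTT h)_{q}^{p+1}$ is carried onto $\im T_{p+1}$; since $(\DTT h)_{q-1}^{p}$ lies in the subspace we quotient by, the isomorphism theorem yields $Q_{p,q}(h)\cong \im T_p/\im T_{p+1}$ and thus $\dimk Q_{p,q}(h)=\dim\im T_p-\dim\im T_{p+1}$. As both images are computed by maps that coincide for $f_1$ and $f_2$, the two ranks agree and the lemma follows. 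I expect the main obstacle to be conceptual rather than computational: there is no canonical $\DTT$-module isomorphism between $\DTT f_1$ and $\DTT f_2$, so elements cannot be transported between the two apolar modules directly, and the device of replacing each space by its leading-form image, where the two maps $T_p$ become identical, is exactly what bypasses this difficulty. The only additional care needed is the boundary bookkeeping ($d=0$ and the out-of-range values of $n$), where $\partial\hook g$ vanishes and both decompositions return $0$.
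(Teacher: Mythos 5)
Your argument is correct and is essentially the paper's own proof: both rest on the observation that for $\partial\in\Dmm_{\DTT}^{n}$ the polynomial $\partial\hook(f_1-f_2)$ has degree strictly below the relevant cutoff $j-(m+n)$ once $m\le a-1$, so the filtration pieces $(\DTT f_i)^{n}_{q}$ have a common preimage in $\Dmm_{\DTT}^{n}$ and matching subquotients. Your packaging via the leading-form maps $T_p$ makes the compatibility of the two comparisons with the inclusion of the $(p+1)$-st level slightly more explicit than the paper's, but the mechanism is the same.
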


    \begin{proof}
        \def\iatfone#1#2{(\DTT f_1)_{#1}^{#2}}
        \def\iatftwo#1#2{(\DTT f_2)_{#1}^{#2}}
        In the computation of $\Dhd{m}{n}$, where $m\leq a -1$, only $\iaq{m,
        n}$ such that $(j+1) - (m + n) \leq a - 1$ are used. By Proposition
        \ref{ref:hilbertfunctionfrompolynomial:prop} $k$-ranks of such
        $\iaq{m, n}$ depend only on $k$-ranks of $\iatfone{n}{m}$ such that
        $(j+1) - (m+n) \leq a$.
        Let us fix $m$ and take two canonical epimorphisms
        \[\pi_i: \DD{mmT}{\Dmm_{\DTT}}^m\to \DTT f_i,\quad \pi_i(\partial) =
        \partial\hook f_i\quad\mbox{for}\quad i=1,2.\]
        We claim that if $(j+1) - (m+n) \leq a$ then the preimages $\pi_1^{-1}\iatfone{n}{m},
        \pi_2^{-1}\iatftwo{n}{m}$ are equal. First, if $\partial\in \DmmT^m$,
        then $\deg \partial\hook (f_1 - f_2) \leq j - a - m$.
        An element
        $\partial\in \DmmT^m$ satisfies $\pi_1(\partial) \in \iatfone{n}{m}$
        if and only if $\deg \partial \hook f_1 < n$. Similarly $\pi_2(\partial)\in
        \iatftwo{n}{m}$ if and only if $\deg\partial \hook f_2 < n$.
        But $n \geq j+1-a-m > \deg\partial\hook (f_1 - f_2)$ so both
        conditions are equivalent.

        Now
        \def\preimageone#1#2{\pi_1^{-1}\pp{\iatfone{#1}{#2}}}%
        \def\preimagetwo#1#2{\pi_2^{-1}\pp{\iatftwo{#1}{#2}}}%
        \begin{equation}\label{item:quotientsareok}
            \frac{\iatfone{n}{m}}{\iatfone{n-1}{m}}  \simeq
            \frac{\preimageone{n}{m}}{\preimageone{n-1}{m}} =
            \frac{\preimagetwo{n}{m}}{\preimagetwo{n-1}{m}}  \simeq
            \frac{\iatftwo{n}{m}}{\iatftwo{n-1}{m}}.
        \end{equation}
        Since $\iatf{n-1}{m}\cap \iatf{n}{m+1} = \iatf{n-1}{m+1}$ we have
        inclusions
        \[
        \frac{\iatfone{n}{m+1}}{\iatfone{n-1}{m+1}} \into
        \frac{\iatfone{n}{m}}{\iatfone{n-1}{m}}\quad\mbox{and}\quad
        \frac{\iatftwo{n}{m+1}}{\iatftwo{n-1}{m+1}} \into
        \frac{\iatftwo{n}{m}}{\iatftwo{n-1}{m}},
        \]
        thus the claim follows from isomorphisms (\ref{item:quotientsareok}).
    \end{proof}

        Example~\ref{ref:differentdecompositions:example} shows that the inequality
        $m\leq a - 1$ is strict.

    \begin{cor}\label{ref:topdegreefirstrow:cor}
        \begin{enumerate}
            \item The sequence $\Dhdvect{\DTT/f^{\perp}, 0}$ is the Hilbert function
        of the top degree form of~$f$.
            \item If the Hilbert function of $\DTT/f^{\perp}$ is
                symmetric~i.e.~$h(t) = h(j-t)$ for all $0\leq t\leq j$, then
                it is equal to $\Dhdvect{0}$.
        \end{enumerate}
    \end{cor}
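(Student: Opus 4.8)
The plan is to treat the two parts separately; both reduce quickly to results already established, so the work is in choosing the right specialization of each.

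For the first part, I would let $g$ denote the top degree form of $f$, so that $\deg(f-g)\le j-1$. Applying Lemma~\ref{ref:lowdegreesindecomposition:lem} to $f_1=f$ and $f_2=g$ with $a=1$ yields $\Dhd{\DTT/f^{\perp},0}{n}=\Dhd{\DTT/g^{\perp},0}{n}$ for all $n$, that is $\Dhdvect{\DTT/f^{\perp},0}=\Dhdvect{\DTT/g^{\perp},0}$. Since $g$ is homogeneous, Lemma~\ref{ref:gradedhavesymmetrichilbertfunction:lem} identifies $\Dhdvect{\DTT/g^{\perp},0}$ with the entire local Hilbert function of $\DTT/g^{\perp}$, which is by definition the Hilbert function of the top degree form of $f$. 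This settles (1).

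For the second part I would pass to generating functions. Set $D_s(x):=\sum_t \Dhd{s}{t}\,x^t$ and $H(x):=\sum_t h(t)\,x^t$; by Point~5 of Theorem~\ref{ref:iarrobinoHfdecomposition:thm} we have $H=\sum_s D_s$. Point~3 of the same theorem says each row $\Dhdvect{s}$ is symmetric about $\tfrac{j-s}{2}$, which in terms of generating functions reads $x^{\,j-s}D_s(1/x)=D_s(x)$. Consequently $x^j H(1/x)=\sum_s x^s D_s(x)$, while the hypothesis that $h$ is symmetric is exactly $x^j H(1/x)=H(x)$. Comparing the two expressions gives the identity $\sum_{s\ge 1}(x^s-1)D_s(x)=0$.

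The crux is then to extract rowwise vanishing from this single identity using positivity. Differentiating with respect to $x$ and evaluating at $x=1$, where every factor $x^s-1$ vanishes, leaves $\sum_{s\ge 1}s\,D_s(1)=0$. Each coefficient $\Dhd{s}{t}$ is a non-negative integer, so $D_s(1)=\sum_t \Dhd{s}{t}\ge 0$ and every summand $s\,D_s(1)$ is non-negative; the sum can vanish only if $D_s(1)=0$ for all $s\ge 1$, forcing $\Dhdvect{s}=0$ for $s\ge 1$ and hence $h=\Dhdvect{0}$. I expect the only genuinely delicate point to be this passage from the generating-function identity to the vanishing of whole rows; the differentiation-and-positivity trick is what makes it immediate, and if one preferred to avoid generating functions it could be replaced by an induction on $\min(t,j-t)$ peeling off the outermost columns of $\sum_{s\ge 1}\Dhdvect{s}$.
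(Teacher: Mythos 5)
Part (1) of your proposal is exactly the paper's proof: compare $f$ with its top degree form via Lemma~\ref{ref:lowdegreesindecomposition:lem} (with $a=1$, so only the row $\Delta_0$ is controlled, which is all that is needed), then invoke Lemma~\ref{ref:gradedhavesymmetrichilbertfunction:lem} for the homogeneous form. For part (2) the paper offers no actual argument --- it only remarks that the claim is ``purely combinatorial'' and rests on the symmetry of each row $\Delta_s$ about $\frac{j-s}{2}$. Your generating-function computation is a correct completion of precisely that sketch: the identity $\sum_{s\ge 1}(x^s-1)D_s(x)=0$ follows from row symmetry together with Point~5 of Theorem~\ref{ref:iarrobinoHfdecomposition:thm}, and the non-negativity of the entries $\Dhd{s}{t}$ (they are ranks of vector spaces) lets you conclude. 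Your differentiate-at-$1$ step is fine since the $D_s$ are polynomials; a marginally quicker finish is to evaluate at any real $x>1$, where every summand $(x^s-1)D_s(x)$ is non-negative, forcing $D_s\equiv 0$ for $s\ge 1$. Either way the argument is sound and supplies exactly the detail the paper leaves to the reader.
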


    \begin{proof}
        \begin{enumerate}
            \item Let $\DD{tmpftop}{f_{top}}$ be the top degree form of $f$,
                then $\deg f - \Dtmpftop\leq j -1$ and from Lemma~\ref{ref:lowdegreesindecomposition:lem} it
                follows that $\Dhdvect{0}$ are equal for $f$ and $f_{top}$.
                But $f_{top}$ is homogeneous so from Lemma
                \ref{ref:gradedhavesymmetrichilbertfunction:lem} it follows
                that $\Dhdvect{0}$ is the Hilbert function of $\Dtmpftop$.
            \item The proof is purely combinatorial --- the only important fact
                is that $\Dhdvect{i}$ are symmetric with
                respect to $\frac{j-i}{2}$ for $i\geq 0$, this follows from
                Theorem~\ref{ref:iarrobinoHfdecomposition:thm}.\qedhere
        \end{enumerate}
    \end{proof}

    \def\Dy{y}
    \begin{example}\label{ref:differentdecompositions:example}
            Let us take $n = 2,\ \DTT = k\formal{\Dy_1, \Dy_2}$ as defined in
            Theorem~\ref{ref:Macaulaycorrcor:thm} and an apolar algebra $A = \DTT/(x_1^3 -
            x_2^2)^{\perp}$.
            Since $\Dhdvect{0}$
            is the Hilbert function of $\DTT/(x_1^3)^{\perp}$ is it equal to $(1,
            1, 1, 1)$. Moreover as $h_A(1) = 2$ and $\Dhdvect{1} = (0, a, 0)$
            we see that $1 + a = 2$, so that $a = 1$ and the full
            decomposition is
            \[\begin{matrix}
                \Dhdvect{0}=&(1,& 1,& 1,& 1)\\
                \Dhdvect{1}=&(0,& 1,& 0)
            \end{matrix}\]
            yielding $\dimk A = 5$ and $h_A = (1, 2, 1, 1)$.

            On the other hand let us take $B = \DTT/(x_1^3 - x_1\cdot
            x_2)^{\perp},\ f:= x_1^3 - x_1 x_2$. Then one calculates that $\iatf{}{1} = k\oplus
            kx_1$, $x_1 x_2$ is ``shadowed by $x_1^3$'' and so the Hilbert function decomposition
            is
            \[\begin{matrix}
                \Dhdvect{0}=&(1,& 1,& 1,& 1)\\
                \Dhdvect{1}=&(0,& 0,& 0).
            \end{matrix}\]
            We will now see that this difficulty can be overrun by
            standardising $f$. Unfortunately this method works only for
            problems occurring in low degrees, in general the problem of
            removing ``exotic summands''
            is difficult, see \cite{BMR}.
    \end{example}

    Theorem~\ref{ref:Macaulaycorrcor:thm} presents any finite
    local Gorenstein $k$-algebra as $\DTT/f^{\perp}$, but the choice of $f$ is not
    unique. However we have particularly good choices of $f$,
    each of which is named the standard form of $f$.

    \begin{defn}
        For a finite Gorenstein $k$-algebra $A  \simeq \DTT/I$ of socle degree
        $j$ let $\Dhdvect{\bullet}$ be the
        decomposition of the Hilbert function of $A$ and $e(i) := \sum_{t=0}^i
        \Dhd{t}{1}$.
        If $f\in \DSS$ is a dual socle generator of $\DTT/I$ then
        $f$ \emph{is in the standard form} iff
        \[
        f = f_0 + f_1 + f_2 + f_3 + \dots + f_j\mbox{ for } f_i\in \DSS^i \cap
        k[x_1,\dots,x_{e(j-i)}]
        \]
        and either the socle degree of $A$ is at most one or $f_0 = f_1 = 0$.
    \end{defn}
    
    The following theorem proves that a standard form exists for any $f\in
    \DSS$.

    \begin{thm}[Standard form of a dual socle generator]\label{ref:standardform:thm}
        Let $(A, \Dmm, k)$ be a local finite Gorenstein $k$-algebra of socle
        degree $j$
        with $h_A(1) = n$. Set $\DSS = \DSring$ and $\DTT = \DTring$.
        Then there exists $f = f_0 + \dots + f_j \in \DSS$, such that
        \begin{enumerate}
            \item $\DTT/f^{\perp}  \simeq A$,
            \item $f_i\in S^i \cap k[x_1,\dots, x_{e(j-i)}]$, where $e(i) =
                \sum_{t=0}^i \Dhd{t}{1}$.
        \end{enumerate}
        If $j\geq 2$ then one can furthermore assume $f_0 = f_1 = 0$.
    \end{thm}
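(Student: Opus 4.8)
The plan is to fix a dual socle generator and normalize it degree by degree, from the top, by applying automorphisms of $\DTT$. By Theorem~\ref{ref:Macaulaycorrcor:thm} choose $f\in\DSS$ with $\DTT/f^{\perp}\simeq A$; then $\deg f=j$. For a continuous $k$-algebra automorphism $\phi$ of $\DTT$ define its action on $\DSS$ by $\partial\hook(\phi\cdot f)=\phi^{-1}(\partial)\hook f$ for all $\partial\in\DTT$; then $(\phi\cdot f)^{\perp}=\phi(f^{\perp})$, so $\DTT/(\phi\cdot f)^{\perp}\simeq\DTT/f^{\perp}\simeq A$ via $\phi$. Thus it suffices to produce $\phi$ with $\phi\cdot f$ in standard form. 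Two kinds of $\phi$ are available: those with invertible linear part, which act on $\DSS$ by the corresponding linear substitution of the $x_i$, and the unipotent ones $\phi(y_i)=y_i+P_i$ with $\operatorname{ord}P_i\ge r$, which fix every form of $f$ of degree $>j-r+1$ and modify only the lower ones (expand $\phi^{-1}(y^{\beta})=y^{\beta}+(\text{higher order})$ and note that $(y^{\gamma}\hook f)(0)=0$ once $|\gamma|>j$). The base of the induction is immediate: by Corollary~\ref{ref:topdegreefirstrow:cor}, $\Dhdvect{0}$ is the Hilbert function of the top form $f_j$, so $f_j$ has exactly $\Dhd{0}{1}=e(0)$ essential variables and a single linear substitution puts $f_j\in k[x_1,\dots,x_{e(0)}]$.

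For the inductive step I would assume $f_{j-t}\in k[x_1,\dots,x_{e(t)}]$ for all $t<s$ and arrange this for $t=s$. The bookkeeping is controlled by $e(s)-e(s-1)=\Dhd{s}{1}=\dimk\iatf{j-s}{1}-\dimk\left(\iatf{j-s-1}{1}+\iatf{j-s}{2}\right)$, which holds by Proposition~\ref{ref:hilbertfunctionfrompolynomial:prop} (this is $\dimk Q_{1,\,j-s}$) and which measures the number of \emph{genuinely new} first-order derivative directions of degree $j-s-1$, i.e.\ those not already produced by the second-order derivatives $\iatf{j-s}{2}$. First I would apply a linear substitution among $x_{e(s-1)+1},\dots,x_n$ only, which fixes $x_1,\dots,x_{e(s-1)}$ and hence fixes the already-normalized forms $f_{j-t}\in k[x_1,\dots,x_{e(t)}]$ for $t<s$, so as to collect these $\Dhd{s}{1}$ new essential variables into the slots $x_{e(s-1)+1},\dots,x_{e(s)}$. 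The remaining occurrences in $f_{j-s}$ of variables $x_l$ with $l>e(s)$ are ``shadowed'' by the higher forms, and I would cancel them by a unipotent automorphism with $\operatorname{ord}P_i\ge s+1$, exactly as $x_1^3-x_1x_2\mapsto x_1^3$ in Example~\ref{ref:differentdecompositions:example}; by the degree remark above such a $\phi$ fixes $f_j,\dots,f_{j-s+1}$, so the inductive hypothesis survives and $f_{j-s}\in k[x_1,\dots,x_{e(s)}]$.

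The step I expect to be the main obstacle is precisely this cancellation of shadowed variables: I must show that, after the linear normalization, every occurrence in $f_{j-s}$ of a variable $x_l$ with $l>e(s)$ can be removed by a unipotent $\phi$ built from the higher forms without reintroducing forbidden variables and without perturbing $f_j,\dots,f_{j-s+1}$, and that the residual essential count drops to exactly $e(s)$. The reason this is delicate is that a purely linear change can only reduce $f_{j-s}$ to its essential-variable number, which in general exceeds $e(s)$ (Remark~\ref{ref:baddecompositions:remark} and Example~\ref{ref:differentdecompositions:example} show the surplus is real); the surplus is exactly the part explained by second-order derivatives, counted by $\iatf{j-s}{2}$ in the formula above, and only a genuinely nonlinear automorphism can absorb it. Consistency of the whole induction rests on Lemma~\ref{ref:lowdegreesindecomposition:lem}: modifying $f$ in degrees $\le j-s$ leaves the rows $\Dhdvect{m}$ with $m\le s-1$ unchanged, so the normalizations made at higher degrees are stable under all later moves, while the homogeneous case Lemma~\ref{ref:gradedhavesymmetrichilbertfunction:lem} anchors the top.

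It remains to delete $f_0$ and, when $j\ge2$, also $f_1$. Since $f^{\perp}\subseteq\Dmm_{\DTT}$ and $\partial\hook f_0=0$ for every $\partial\in\Dmm_{\DTT}$, we have $f^{\perp}=(f-f_0)^{\perp}$, so the constant may be dropped as soon as $j\ge1$. For the linear part, fix a degree-$j$ monomial $x^{\alpha}$ occurring in $f_j$, so that $v_0:=(y^{\alpha}\hook f)(0)\ne0$, and consider the unipotent automorphism $\phi(y_i)=y_i+c_i\,y^{\alpha}$. Because $y^{\alpha}$ has the maximal degree $j\ge2$, the degree argument of the first paragraph shows that $\phi\cdot f$ differs from $f$ only in the coefficients of $1$ and of the $x_i$, the latter changing from $a_i$ (the coefficient of $x_i$ in $f_1$) to $a_i-c_iv_0$; choosing $c_i=a_i/v_0$ kills $f_1$ while fixing all $f_{\ge2}$, which completes the construction of the standard form.
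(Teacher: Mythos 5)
Your overall strategy---realize every change of dual socle generator by an automorphism of $\DTT$ and normalize from the top degree downwards---is the same in spirit as the paper's, and several pieces are sound: the reduction to producing a suitable automorphism, the order bookkeeping showing that a unipotent substitution $y_i\mapsto y_i+P_i$ with $\operatorname{ord}P_i\geq r$ fixes all $f_d$ with $d>j-r+1$, the base case via Corollary~\ref{ref:topdegreefirstrow:cor}, and the removal of $f_0$ and $f_1$. But there is a genuine gap, and it sits exactly where you say you expect one. In the inductive step you assert that, after a linear substitution, every occurrence in $f_{j-s}$ of a variable $x_l$ with $l>e(s)$ is ``shadowed'' by the higher forms and can be cancelled by a unipotent automorphism of order at least $s+1$ without reintroducing forbidden variables. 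You give no argument for this claim, and it is the entire content of the theorem: nothing in your text identifies which linear substitution is the correct one, nor why the surplus variables of $f_{j-s}$ (which, as you observe via Remark~\ref{ref:baddecompositions:remark}, really can exceed $e(s)$ in number) are expressible through derivatives of $f_j+\dots+f_{j-s+1}$ in a way a unipotent substitution can absorb. As written this is a plan, not a proof.

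The paper closes exactly this gap by making all choices at once instead of degree by degree. From the symmetric decomposition one computes the coranks of the flag $\pp{\Dmm\cap\Dmmperp{j-a-1}+\Dmm^2}/\Dmm^2$ inside $\Dmm/\Dmm^2$ and lifts an adapted basis to elements $z_n,\dots,z_1\in\Dmm_{\DTT}\setminus\Dmm_{\DTT}^2$ such that for $r>e(a)$ the image of $z_r$ in $A$ lies in $\Dmm\cap\Dmmperp{j-a-1}$, that is, $\Dmm_{\DTT}^{j-a-1}z_r\subseteq I$. After the automorphism $z_i\mapsto y_i$ this membership reads $\Dmm_{\DTT}^{i-1}y_r\hook f=0$ for all $r>e(j-i)$, and a downward induction on $i$ (the terms $f_d$ with $d>i$ are killed by $y_r$ by the inductive hypothesis, those with $d<i$ by degree) gives $\Dmm_{\DTT}^{i-1}\pp{y_r\hook f_i}=0$; since $y_r\hook f_i$ is homogeneous of degree $i-1$, it must vanish, which is precisely $f_i\in k[x_1,\dots,x_{e(j-i)}]$. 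In other words, the cancellation you are missing is never performed explicitly: it is encoded in the nonlinear parts of the lifts $z_r$, and the verification that it works is the homogeneity argument just described. To salvage your inductive scheme you would need the equivalent statement that for each $r>e(s)$ there exists $\partial_r\in\Dmm_{\DTT}^2$ with $\Dmm_{\DTT}^{j-s-1}(y_r+\partial_r)\hook f=0$, which is exactly the paper's choice of $z_r$. (A minor remark: your elimination of $f_1$ by the substitution $y_i\mapsto y_i+c_iy^{\alpha}$ works, but the paper's is shorter---since $h_A(1)=n$ every polynomial of degree at most one is a derivative of $f$, so $f_0+f_1=\partial\hook f$ with $\partial\in\Dmm_{\DTT}$ and $f-f_0-f_1=(1-\partial)\hook f$ generates the same $\DTT$-submodule.)
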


%        \[\Dmm = \iaintersnd{j} \supseteq \iaintersnd{j-1} \supseteq \dots
%            \supseteq \Dmm \cap \Dmmperp{} \supseteq 0.\]
    \begin{proof}
        Choose first any $f' \in \DSS$ such that
        $\DTT/(f')^{\perp} \simeq A$; this is possible by Theorem
        \ref{ref:Macaulaycorrcor:thm}. Denote $I := (f')^{\perp}$. Write $f' = f'_0 + \dots + f'_j$, where
        $f'_i\in S^i$ for $i=0,\dots j$.
        Consider the sequence of ideals
        \def\iaintersnd#1{\Dmm\cap \Dmmperp{#1}}%
        \def\fracced#1{\frac{#1 + \Dmm^2}{\Dmm^2}}
        \begin{equation}\label{eq:standardform}\frac{\Dmm^{\phantom{2}}}{\Dmm^2} =
            \fracced{\iaintersnd{j}} \supseteq
            \fracced{\iaintersnd{j-1}} \supseteq \dots
            \supseteq \fracced{\Dmm \cap \Dmmperp{}} \supseteq
            0,\end{equation}
            then from 
            Theorem~\ref{ref:iarrobinoHfdecomposition:thm}
            Point~\ref{item:subquotients} it follows that
        \begin{align*}
            \dimk \frac{\iaintersnd{j-a-1} + \Dmm^2}{\Dmm^2} = \dimk
            \frac{\Dmm^{\phantom{2}}}{\Dmm^2} - e(a),
        \end{align*}
        for all $-1\leq a\leq j-1$, where $e(-1) := 0$.
        Now we choose lifings of the bases of the $\DTT$-modules from
        Equation \eqref{eq:standardform}. More precisely take $z_n,
        z_{n-1},\dots, z_1\in\DmmT \setminus \DmmT^2$
        such that for all $-1\leq a\leq j-1$ the images of $z_n,
        z_{n-1},\dots, z_{e(a) + 1}$ in $\DTT/I  \simeq A$ lie
        in $\iaintersnd{j-a-1}$ and their images in $A/\Dmm^2$ form a $k$-basis of
        $\frac{\iaintersnd{j-a-1} + \Dmm^2}{\Dmm^2}$.
        In particular the case $a = -1$ implies that the images of $z_n, \dots, z_1$ form a
        $k$-basis of $\Dmm/\Dmm^2 \simeq \DmmT/\DmmT^2$, so $k\formal{z_1,\dots,z_n} = \DTT$.

        For any $r$ and $a$ such that $r > e(a)$ the image of $z_r$ in
        $A$ lies in $\Dmmperp{j-a-1}$ so that the image of $\DmmT^{j-a-1}z_r$
        in $A$ is zero, in other words
        \begin{equation}\label{eq:stformzeroing}\DmmT^{j-a-1}z_r \subseteq
        I.\end{equation}

        Consider the automorphism $\varphi$ of $\DTT$ sending $z_i$ to $y_i$, let the
        image of $I$ under this automorphism be equal to the annihilator of
        some polynomial $f\in \DSS$.
        Note that for any $r$ and $i$ such that $r > e(j-i)$ by Equation
        \eqref{eq:stformzeroing} we have
        \begin{equation}\label{eq:stformvarzero}
            \DmmT^{i-1}y_r = \DmmT^{j-(j-i)-1} y_r =
            \varphi\pp{\DmmT^{j-(j-i)-1}z_r} \subseteq \varphi(I) = (f)^{\perp}.
        \end{equation}
        Write $f = f_j + f_{j-1} + \dots + f_0$, where $f_i\in S^i$.
        We claim that
        \[f_i\in \DSS^i \cap k[x_1,\dots, x_{e(j-i)}].\]
        We prove it by downward induction on $i= j, j-1,\dots, 0$.
        Let us present the induction step. Take any $i$ such that $0 \leq i\leq j$
        and suppose that $f_d \in \DSS^{d} \cap k[x_1,\dots, x_{e(j-d)}]$ for
        all $d > i$. Take any $r > e(j-i)$, then $\DmmT^{i-1}y_r\hook f = 0$
        by Equation \eqref{eq:stformzeroing}.
        By induction hypothesis $y_r\hook f_d = 0$ for all $d > i$. Moreover
        $\DmmT^{i-1}y_r \subseteq \DmmT^{i}$ so that $\DmmT^{i-1}y_r \hook
        f_d = 0$ for $d<i$. This means that
        \[
        0 = \DmmT^{i-1}y_r \hook f = \DmmT^{i-1}y_r \hook f_i =
        \DmmT^{i-1}\pp{y_r \hook f_i}.
        \]
        But $f_i$ is homogeneous of degree $i$
        so $y_r \hook f_i$ is homogeneous of degree $i-1$ and
        annihilated by $\DmmT^{i-1}$, thus $y_r\hook f_i = 0$. This proves that
        \[f_i\in S^i \cap k[x_1,\dots, x_{e(j-i)}].\]
        The basis of the induction is proved in the same way (with no $d >
        i$).

        Now suppose $j\geq 2$. Since $h_A(1) = n$, by
        Proposition~\ref{ref:hilbertfunctionfrompolynomial:prop} we see that any
        polynomial of degree at most one is a derivative of $f$. In particular
        $f_0 + f_1 = \partial\hook f$ for some $\partial\in \DTT$. Since $j\geq
        2$ we have $\partial\in \DmmT$, then $f - (f_0 + f_1) = (1 -
        \partial)\hook f$ generates the same $\DTT$ submodule of $\DSS$ as
        $f$ and
        $f^{\perp} = \pp{f - f_0 - f_1}^{\perp}$ follows.
    \end{proof}

    \subsection{Families of dual generators and morphisms to Hilbert scheme}

    In this subsection we obtain tools for the reasoning ``since the set of dual
    generators is irreducible, the corresponding apolar algebras form an
    irreducible subset of the Hilbert scheme''. But the idea is deeper
    --- we obtain morphisms from affine varieties to the Hilbert scheme,
    which give information about its geometry.

    \begin{prop}\label{ref:familiesofpolystohilb:prop}
        \def\DPn{\mathbb{P}^n}
        Fix positive integers $j, r$ and a point $p\in \DPn$ with an affine
        neighbourhood $U$.
        View $\DD{Sleqj}{\DSS^{\leq j}}$ as an affine space and let $V \subseteq
        \DSleqj$
        be a Zariski-constructible subset such that for every closed point $f\in V$ the
        apolar algebra of $f$ has rank $r$. Then $V$ induces a morphism
        \[\varphi:V\to \DD{HilbGPn}{\operatorname{HilbGor}_{\DPn}^{r}}\]
        such that for every closed point $f\in V$ the image $\varphi(f)\in
        \DHilbGPn$ corresponds to a closed subscheme $W_f\subseteq \DPn$ supported
        at $p$ and with $\Gamma(U, W_f)$ isomorphic to the apolar algebra of
        $f$.
    \end{prop}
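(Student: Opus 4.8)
The plan is to exhibit an explicit flat family over $V$ whose fibre over $f$ is the apolar algebra $\DTT/f^{\perp}$ embedded in $\mathbb{P}^n$ at $p$, and then invoke the representability of the Hilbert functor. First I would fix coordinates so that $U\simeq \mathbb{A}^n = \Spec B$ with $B=k[x_1,\dots,x_n]$ and $p$ the origin, and identify the variables $y_i$ of $\DTT$ with the $x_i$. For a polynomial $f\in \DSS^{\leq j}$ every derivative of order $>j$ kills $f$, so $\Dmm_{\DTT}^{j+1}\subseteq f^{\perp}$ and the apolar algebra $\DTT/f^{\perp}$ is a quotient of the finite algebra $\DTT/\Dmm_{\DTT}^{j+1}\simeq B/(x_1,\dots,x_n)^{j+1}$. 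Under this identification $\DTT/f^{\perp}$ is realised as $B/I_f$ for an ideal $I_f\supseteq (x_1,\dots,x_n)^{j+1}$, hence as a closed subscheme $W_f\subseteq U$ supported at $p$, with $\Gamma(U,W_f)=B/I_f$ tautologically the apolar algebra; by Theorem~\ref{ref:Macaulaycorrcor:thm} its rank is $r$ and it is Gorenstein.

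To put these fibres into a family I would use the tautological polynomial $F=\sum_\alpha c_\alpha x^\alpha$ over $V$, whose coefficients $c_\alpha$ are the restrictions to $V$ of the coordinate functions on the affine space $\DSS^{\leq j}$, so that $F$ specialises to $f$ at each closed point $f\in V$. Contraction against $F$ gives an $\OO{V}$-linear map of free $\OO{V}$-modules
\[
c_F : \left(\DTT/\Dmm_{\DTT}^{j+1}\right)\tensor_k \OO{V} \longrightarrow \DSS^{\leq j}\tensor_k \OO{V},\qquad \partial\tensor 1\longmapsto \partial\hook F,
\]
whose matrix entries are, up to scalars, the $c_\alpha$, hence regular on $V$. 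Its fibre at $f$ is the contraction map whose kernel is $f^{\perp}/\Dmm_{\DTT}^{j+1}$ and whose image is $\DTT\hook f$; in particular $\operatorname{rank} c_F = \dimk \DTT/f^{\perp} = r$ at every closed point, by Proposition~\ref{ref:hilbertfunctionfrompolynomial:prop}. The crucial algebraic point is that $\ker c_F$ is an ideal of $(\DTT/\Dmm_{\DTT}^{j+1})\tensor_k \OO{V}$: if $\partial\hook F=0$ then $(\eta\partial)\hook F=\eta\hook(\partial\hook F)=0$ by associativity of the action (Corollary~\ref{ref:algebrafromcoalg:cor}). Thus $\ker c_F$ together with $(x_1,\dots,x_n)^{j+1}$ defines a closed subscheme $\mathcal{W}\subseteq U\times V$ with $\OO{\mathcal{W}}\simeq \im c_F$ as $\OO{V}$-modules; since each fibre is supported at $p$, its support is $\{p\}\times V$, so $\mathcal{W}$ is also closed in $\mathbb{P}^n\times V$ and finite, hence proper, over $V$.

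For flatness I would observe that $c_F$ has constant rank $r$ at all closed points, which are dense. Taking $V$ with its reduced induced structure and working on each irreducible, hence integral, component, a morphism of free $\OO{V}$-modules of constant rank over a reduced base has locally free image; thus $\OO{\mathcal{W}}\simeq \im c_F$ is locally free of rank $r$, so $\mathcal{W}\to V$ is flat. Alternatively one may quote Corollary~\ref{ref:lookatclosed:cor} directly on each integral component, the fibre rank being the constant $r$. By the representability of the Hilbert functor, Theorem~\ref{ref:HilbertExistence:thm}, this flat family of degree-$r$ subschemes of $\mathbb{P}^n$ yields a morphism $\varphi:V\to \operatorname{Hilb}^{r}_{\mathbb{P}^n}$ with $\varphi(f)=[W_f]$.

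Finally, every fibre $W_f$ is Gorenstein by Theorem~\ref{ref:Macaulaycorrcor:thm}, so $\varphi$ sends all closed points of $V$ into the Gorenstein locus; this locus is open by Proposition~\ref{ref:GorensteinlocusinHilbr:prop}, and since $V$ is of finite type its closed points are dense, so the preimage of the Gorenstein locus is all of $V$ and $\varphi$ factors through $\operatorname{HilbGor}^{r}_{\mathbb{P}^n}$. I expect the main obstacle to be this flatness step, namely upgrading the pointwise constancy of $\operatorname{rank} c_F$ to local freeness of $\im c_F$ over the possibly non-integral, merely constructible base $V$; the remaining verifications, that $\ker c_F$ is an ideal and that the fibres are the prescribed apolar algebras, are immediate from the coalgebra formalism of Corollary~\ref{ref:algebrafromcoalg:cor} and Proposition~\ref{ref:hilbertfunctionfrompolynomial:prop}.
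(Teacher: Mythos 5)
Your proposal is correct and follows essentially the same route as the paper: both construct the family inside $R\times V$ with $R=\Spec(\DTT/\Dmm_{\DTT}^{j+1})$ as the locus cut out fibrewise by the annihilator of the tautological polynomial, and both obtain flatness from the pointwise-constant rank $r$ of the contraction map via local freeness over the (reduced) base --- the paper phrases this through the annihilator subbundle $\mathcal{A}nn\subseteq\mathcal{W}$, its quotient $\mathcal{Q}$ and the dual inclusion $\mathcal{Q}^*\subseteq\mathcal{W}^*$, citing \cite[II.5 Ex 5.8bc]{HarAG}, which is exactly your $\ker c_F$/$\im c_F$ argument in bundle language. Your explicit factorisation through the Gorenstein locus via Proposition~\ref{ref:GorensteinlocusinHilbr:prop} is a detail the paper leaves implicit.
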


    \begin{proof}
        \def\DPn{\mathbb{P}^n}
        We will construct a closed subscheme $Z \subseteq \DPn \times V$ flat
        over $V$ and with suitable fibers, then the required morphism will
        follow from
        the universal property of the Hilbert scheme. We can view
        $R := \Spec \pp{\DD{Trealleqj}{\DTT/\DmmT^{j+1}}}$ as a closed subscheme of $\DPn$ supported at
        $p$ and we will in fact construct $Z \subseteq R
        \times V$.

        We first treat $\DTrealleqj$ formally. Let $W \to\DTrealleqj$ be an
        isomorphism of $k$-vector spaces. Consider the vector bundle
        \def\Sym{\operatorname{Sym}}
        $\DD{lW}{\mc{W}} = \Spec \pp{\Sym W^*} \times V$ over $V$. Define a subvariety
        $\DD{lZ}{\mathcal{A}nn}
        \subseteq \DlW$ by
        \[
        \DlZ = \left\{ (\partial, f)\in \DlW\ |\ \partial\hook f = 0 \right\},
        \]
        then the fibers of $\DlZ$ over closed points of $V$ are affine spaces
        of constant
        dimension, so $\DlZ$ is a subbundle of $\DlW$.
        Let 
        $\DD{lQ}{\mc{Q}}$ be the quotient bundle, then dualizing we have an inclusion of vector bundles $\DD{lQstar}{\mc{Q}^*}
        \subseteq \DD{lWstar}{\mc{W}^*}$.
        
        The bundle $\DlWstar$ is canonically
        isomorphic to $\Spec \pp{\Sym W} \times V$. The $k$-linear isomorphism $W\to
        \DTrealleqj$ induces morphisms
        $R \to \Spec \pp{\Sym W}$ and $R \times V \to \DlWstar$.
        We claim that the pullback
        \[\DD{tmpZ}{Z} := \pp{R \times V}\times_{\DlWstar}
        \DlQstar \subseteq R \times V\]
        has the properties required.
        Choose a closed point $f\in V$. The fiber of $\DlQ$ over
        $f$ is the affine space $\DlW_f/\DlZ_f$ and the fiber of $\DlQstar$ over $f$ is
        the affine space $\Homt{k}{\DlW_f/\DlZ_f}{k}$, which is defined by the ideal
        of $\Sym W$ generated by the linear subspace
        $\annn{R}{f} \subseteq
        W$. It follows that the fiber of $\DtmpZ$ over $f$ is isomorphic to
        $\Spec\pp{R/\annn{R}{f}}$, the affine scheme of the apolar algebra of
        $f$.
        The scheme $Z$ is finite over $V$ and its fibers over closed
        points of $V$ have rank $r$. From \cite[II.5 Ex
        5.8bc]{HarAG} it follows that $\OO{Z}$ is a locally free $V$ module,
        thus in particular $Z$ is flat over $V$.
    \end{proof}

    \begin{prop}\label{ref:maximalopen:prop}
        Let $j$ be a positive integer and $V \subseteq \DSS^{\leq j}$ be a Zariski-constructible subset. Then 
        the set of $f\in V$ such that the apolar algebra of $f$ has maximal
        rank over $k$ is open in $V$.
    \end{prop}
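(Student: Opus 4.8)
The plan is to express the rank of the apolar algebra as the rank of a single linear map depending linearly on $f$, and then to invoke semicontinuity of matrix rank. First I would recall that for $f\in \DSS$ the contraction map $\DTT \to \DSS$, $\partial\mapsto \partial\hook f$, has kernel exactly $f^{\perp} = \annn{\DTT}{f}$ and image the cyclic module $\DTT f$; equivalently, by the rank-preserving bijection of Theorem~\ref{ref:Macaulaycorrcor:thm} applied to $I = f^{\perp}$, the apolar algebra $\DTT/f^{\perp}$ satisfies $\dimk \DTT/f^{\perp} = \dimk \DTT f$. Thus the rank over $k$ of the apolar algebra of $f$ is exactly the dimension of the space spanned by $f$ and all of its derivatives.

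Next I would turn this into a uniform finite-dimensional problem. Since $\deg f\le j$, every operator in $\Dmm_{\DTT}^{j+1}$ annihilates $f$, so the contraction factors through the finite-dimensional quotient $\DTT/\Dmm_{\DTT}^{j+1}$ and its image is contained in $\DSleqj$. Fixing the monomial bases of $\DTT/\Dmm_{\DTT}^{j+1}$ and of $\DSleqj$, whose dimensions do not depend on $f$, I obtain a linear map $\varphi_f : \DTT/\Dmm_{\DTT}^{j+1} \to \DSleqj$ whose matrix, by bilinearity of $\hook$ in $\partial$ and in $f$, has entries that are linear forms in the coordinates of $f\in \DSleqj$. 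By the previous paragraph $\rk \varphi_f = \dimk \DTT f$ equals the rank of the apolar algebra of $f$.

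I would then conclude by semicontinuity of matrix rank. For each integer $c$ the locus $\{f : \rk\varphi_f \ge c\}$ is the non-vanishing locus of the $c\times c$ minors of the matrix of $\varphi_f$; as these minors are polynomials in the coordinates of $\DSleqj$, this locus is Zariski-open in $\DSleqj$ and hence open in $V$. Since $\rk\varphi_f\le \dimk \DSleqj$ is bounded on $V$, the rank attains a maximal value $r_{\max}$ there, and the locus of maximal rank equals $V\cap\{f : \rk\varphi_f\ge r_{\max}\}$, which is therefore open in $V$.

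The only delicate point is the reduction in the second paragraph: one must check that $\Dmm_{\DTT}^{j+1}$ genuinely annihilates every $f$ with $\deg f\le j$, so that the contraction descends to the fixed finite-dimensional source $\DTT/\Dmm_{\DTT}^{j+1}$ with image in $\DSleqj$ and with matrix entries linear in $f$. Once this uniform description is in place the semicontinuity argument is immediate; note that $r_{\max}$ is attained simply because $\rk\varphi_f$ is a bounded, integer-valued function on $V$, so no further hypothesis on $V$ is required.
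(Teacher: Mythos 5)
Your proof is correct and follows essentially the same route as the paper, which deduces the statement from semicontinuity of the fibre dimension of the annihilator family $\{(\partial,f):\partial\hook f=0\}$ constructed in the proof of Proposition~\ref{ref:familiesofpolystohilb:prop}; your kernel of $\varphi_f$ is exactly the fibre of that family over $f$. The only difference is that you make the semicontinuity explicit and elementary by exhibiting the matrix of $\varphi_f$ with entries linear in the coefficients of $f$ and using non-vanishing of minors, which is a welcome amount of extra detail but not a different argument.
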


    \begin{proof}
        This is a direct consequence of semi-continuity of dimension of fibers
        and the proof of Proposition~\ref{ref:familiesofpolystohilb:prop}.
    \end{proof}

    \section{Applications}\label{ref:originalresearch:sec}

    In this section for simplicity we assume that $k = \mathbb{C}$, though
    many of the results may be proved with much weaker assumptions.
    Our objective is to present applications of the above theory,
    in particular we will be interested in finding deformations and
    smoothings of zero-dimensional Gorenstein subschemes of
    $\DD{Pn}{\mathbb{P}^n} = \DD{Pnk}{\mathbb{P}_k^n}$.
    Supported by Proposition
    \ref{ref:smoothabilityeverywhere:prop} we will investigate
    $\DD{HilbGPn}{\operatorname{HilbGor}_{\DPn}^{r}}$.
    The picture here was mainly drawn by Iarrobino \cite{ia94}, who proved
    that ``the majority'' of algebras of large enough rank is not smoothable. In
    particular he showed an example of non-smoothable Gorenstein algebra of
    rank $14$ by computing that the tangent space rank of the corresponding point
    of the Hilbert scheme of $\mathbb{P}^6$ is less than $6\cdot 14$, which is
    the dimension of the smoothable component. On the other hand Casnati and
    Notari (see~\cite{CN10}) proved that algebras of rank at most $10$
    are smoothable. It would be interesting to know what is the minimal
    rank of a non-smoothable Gorenstein $k$-algebra, we hope to answer this
    question in a joint paper with Gf. Casnati and R. Notari. In this section
    we content ourselves with analyzing three examples.
    
    To prove that a subset $V \subseteq
    \DHilbGPn$ lies in
    $\DD{HilbGPnzero}{{\operatorname{HilbGor}_{\DPn}^{r}}^{\circ}} :=
    \DHilbGPn \cap {\Hilb_{\DPn}^{r}}^{\circ}$
    we will apply ternary approaches:
    \begin{enumerate}
        \item Prove directly that for any $[R]\in V$ the scheme $R\subseteq
            \mathbb{P}^n$ is smoothable;
        \item Prove that $V$ is irreducible and find a smooth point of the
            Hilbert scheme contained in $V\cap \DHilbGPnzero$;
        \item Prove that $V$ is irreducible and that a general point of $V$
            lies in $\DHilbGPnzero$.
    \end{enumerate}

    We will use the first approach to prove smoothability of Gorenstein
    algebras with Hilbert function $(1, 5, 4, 1)$, the second approach to prove
    smoothability of Gorenstein algebras having
    Hilbert function $(1, 4, 4, 3, 1)$ and the third approach to prove
    smoothability of \emph{graded} Gorenstein algebras with Hilbert function
    $(1, 3, 3, 3, 3, 1)$.

    The most important facts that we will use without further reference are
    Proposition~\ref{ref:smoothabilityeverywhere:prop}, which allows us not
    to care too much about the ambient space and the Definition
    \ref{ref:defsmoothable:def} of the
    smoothable component, which implies that to prove smoothability of $X$ it is
    sufficient to show a deformation with a fiber $X$ over a closed point and
    a general fiber \emph{smoothable}, not necessarily smooth.

    Before proving our main results we provide the technical background by
    further analyzing the dual socle generators and apolar ideals in the
    general situation covering the above
    cases.

    \subsection{Enhancements of Iarrobino symmetric decompositions}

    The following proposition further standardises the dual
    generator. It is used in proof of Theorem~\ref{ref:bjorktactical:thm}.
    \begin{prop}\label{ref:squares:prop}
        Let $f\in \DSS$ be a polynomial of degree $j\geq 2$ such that the Hilbert function decomposition from
        Theorem~\ref{ref:iarrobinoHfdecomposition:thm} has $\Dhdvect{j-2} =
        (0, q, 0)$. Then the apolar algebra of $f$ is isomorphic to the apolar
        algebra of $g\in \DSS$, where $g = g_j + g_{j-1} + \dots + g_2$ is the standard form from
        Theorem~\ref{ref:standardform:thm} and furthermore $g_2$ is a sum of
        $q$ squares of variables not appearing in $g_{\geq 3}$ and a quadric
        in variables appearing in $g_{\geq 3}$.
    \end{prop}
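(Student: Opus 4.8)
The plan is to take the standard form $g$ furnished by Theorem~\ref{ref:standardform:thm} and then apply a short sequence of linear coordinate changes, each of which preserves the standard form, to normalise the quadratic part $g_2$. So I would first fix $g = g_j + \dots + g_2$ with $g_i \in \DSS^i \cap k[x_1,\dots,x_{e(j-i)}]$, where $e(i)=\sum_{t=0}^{i}\Delta_t(1)$, and record the numerology. Since $\Delta_{j-1}=(0,0)$ we have $h_A(1)=e(j-2)$, so $g_2$ is a quadric in all the variables $x_1,\dots,x_{e(j-2)}$, whereas $g_{\geq 3}:=g_j+\dots+g_3$ involves only $x_1,\dots,x_{e(j-3)}$. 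I call the latter the \emph{old} variables and $x_{e(j-3)+1},\dots,x_{e(j-2)}$ the \emph{new} variables; there are $q=e(j-2)-e(j-3)=\Delta_{j-2}(1)$ of them, and by the constraint $g_i\in k[x_1,\dots,x_{e(j-i)}]$ with $e(j-i)\leq e(j-3)$ for $i\geq 3$ they occur only in $g_2$. I then split $g_2 = Q_{\mathrm{old}} + C + Q_{\mathrm{new}}$ into its old-old, old-new and new-new parts.

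The crucial point is that $Q_{\mathrm{new}}$, the purely new-variable quadric, is nondegenerate of rank $q$. I would obtain this from the perfect pairing of Theorem~\ref{ref:iarrobinoHfdecomposition:thm}: taking $m=1,\ n=2$ in the perfect pairing $Q_{m,n}\times Q_{n-1,m+1}\to k$ (induced by the fixed pairing $A\times A\to k$ of Proposition~\ref{ref:ppairingonGorenstein:prop}) yields a perfect self-pairing $Q_{1,2}\times Q_{1,2}\to k$ on a space of rank $\dim_k Q_{1,2}=\Delta_{j+1-(1+2)}(1)=\Delta_{j-2}(1)=q$. Via the isomorphism $A\simeq \DTT f$ of Proposition~\ref{ref:hilbertfunctionfrompolynomial:prop}, this subquotient is spanned by the classes of the new variables: since $g_{\geq 3}$ is free of them, $y_i\hook g = y_i\hook g_2$ is a linear form for each new $y_i$, and the pairing restricted to these classes is exactly the bilinear form $(y_i,y_k)\mapsto \partial^2 g_2/\partial x_i\,\partial x_k$, i.e.\ the bilinear form of $Q_{\mathrm{new}}$. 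Perfectness of the pairing therefore forces $Q_{\mathrm{new}}$ to have full rank $q$.

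Granting nondegeneracy, the remainder is linear algebra over $\mathbb{C}$. Because $Q_{\mathrm{new}}$ is nondegenerate I can complete the square in the new variables: there is a linear substitution $x_{\mathrm{new},i}\mapsto x_{\mathrm{new},i}+L_i(x_{\mathrm{old}})$, with each $L_i$ linear in the old variables, after which the cross term $C$ vanishes and $g_2 = Q'_{\mathrm{old}} + Q_{\mathrm{new}}$, the discrepancy being absorbed into $Q'_{\mathrm{old}}$. This substitution modifies only the new variables, by old-variable forms, so $g_{\geq 3}$ (which does not involve the new variables) is untouched and all standard-form conditions persist; moreover it is homogeneous of degree one, so it does not reintroduce $g_0,g_1$. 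Finally, since $k=\mathbb{C}$, a nondegenerate rank-$q$ quadratic form is a sum of $q$ squares, so one further linear change among the new variables alone brings $Q_{\mathrm{new}}$ to $x_{\mathrm{new},1}^2+\dots+x_{\mathrm{new},q}^2$, again fixing $g_{\geq 3}$ and the standard form. Each such coordinate change on $\DSS$ induces the contragredient change on $\DTT$ and hence an isomorphism of apolar algebras, so the resulting $g$ defines an algebra isomorphic to the apolar algebra of $f$, remains a standard form, and has $g_2$ equal to a sum of $q$ squares of new variables (which are absent from $g_{\geq 3}$) plus the quadric $Q'_{\mathrm{old}}$ in the old variables $x_1,\dots,x_{e(j-3)}$, which are precisely those appearing in $g_{\geq 3}$.

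I expect the main obstacle to be the second paragraph: identifying the abstract perfect pairing on $Q_{1,2}$ with the Hessian-type bilinear form of $g_2$ on the new variables, so that its nondegeneracy yields $\operatorname{rank} Q_{\mathrm{new}}=q$. Once that is secured, the completion-of-the-square and diagonalisation steps are routine, the only care needed being that the permitted substitutions (new $\mapsto$ new $+$ old, and arbitrary mixing among the new variables) never disturb $g_{\geq 3}$ or the defining conditions of the standard form.
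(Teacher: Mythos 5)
Your proposal is correct and follows the same overall strategy as the paper --- take the standard form from Theorem~\ref{ref:standardform:thm} and then normalise $g_2$ by linear substitutions that fix the old variables --- but it justifies the one non-trivial point, the nondegeneracy of the purely-new-variable part $Q_{\mathrm{new}}$ of $g_2$, by a genuinely different argument. The paper first diagonalises the new--new part over $\mathbb{C}$, obtaining coefficients $\lambda_i\in\{0,1\}$, and then rules out $\lambda_i=0$ by a rank count: if some new variable had $\lambda_i=0$, then $y_i\hook g$ would lie in $k[x_1,\dots,x_{e(j-3)}]$ and the space of partial derivatives of $g$ of degree at most one would have $k$-rank strictly less than $h(0)+h(1)=n+1$, contradicting Proposition~\ref{ref:hilbertfunctionfrompolynomial:prop}. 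You instead read off nondegeneracy from the perfect pairing $Q_{1,2}\times Q_{1,2}\to k$ of Theorem~\ref{ref:iarrobinoHfdecomposition:thm}, Point~\ref{item:ppairing}, identified with the Hessian form of $Q_{\mathrm{new}}$. Both are legitimate; your route makes the duality origin of the statement transparent, while the paper's is a more elementary count. The completion of the square and the final diagonalisation over $\mathbb{C}$ are the same in both arguments, and your bookkeeping of which substitutions preserve the standard form is accurate.

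One step in your second paragraph needs more care: you assert, but do not prove, that the classes of the $q$ new variables \emph{span} $Q_{1,2}$; what your degree computation actually shows is only that they lie in $A_{1,2}=\mathfrak{m}\cap\left(\mathfrak{m}^2\right)^{\perp}$ and hence define classes there. Without spanning, degeneracy of the Hessian would only produce a class pairing to zero against the other new classes, not against all of $Q_{1,2}$, so perfectness could not yet be invoked. The gap is easily closed: for $j\geq 2$ the socle is contained in $\mathfrak{m}^2$, so $A_{1,1}\subseteq A_{2,2}$ and $Q_{1,2}\simeq \left(A_{1,2}+\mathfrak{m}^2\right)/\mathfrak{m}^2\subseteq\mathfrak{m}/\mathfrak{m}^2$; the images of the new $y_i$ in $\mathfrak{m}/\mathfrak{m}^2$ are linearly independent (they are part of a basis, since $h_A(1)=n$) and lie in this $q$-dimensional subspace, hence form a basis of it. (This is in fact exactly how the new variables are chosen in the proof of Theorem~\ref{ref:standardform:thm}.) With that line added --- and the harmless observation that you may take the projection $\pi$ of Proposition~\ref{ref:ppairingonGorenstein:prop} to be the constant-term functional under $A\simeq \DTT f$, so that the induced pairing on new classes really is $\partial^2 g_2/\partial x_i\partial x_k$ --- your argument is complete.
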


    \begin{proof}
        Applying Theorem~\ref{ref:standardform:thm} we obtain $g$ in the
        required form except from, perhaps, the assumptions on $g_2$.
        We will prove the theorem when $j\geq 3$, the case $j = 2$ is easy and
        we leave it to the reader.
        Let $f:=e(j-3) = \sum_{t=0}^{j-3} \Dhd{t}{1},\ e:=  f + q$, then $g_{\geq 3}\in k[x_1,\dots,x_{f}]$ and $g_2\in
        k[x_1,\dots,x_e]$.
        We can diagonalize $g_2$ with respect to $x_{f+1},\dots, x_e$ obtaining
        \[
        g_2 = \sum_{i=f+1}^e \lambda_i\cdot x_i^2 + Q
        \]
        where $Q\in (x_1,\dots,x_f)$ and $\lambda_i\in \{0, 1\}$.
        Note that after this operation $g$ is still in the standard form.
        If all
        $\lambda_{\bullet}$ are equal to $1$, then by another linear change of
        coordinates $x_{f+1},\dots,x_{e}$ we obtain $g_2 = \sum_{i=f+1}^e
        x_i'^2 + Q'$ where $Q'\in k[x_1,\dots,x_f]$, thus the claim follows.
        If for some $i$ we have $\lambda_i = 0$ then
        $y_i^*\hook g \in k[x_1,\dots,x_f]$, thus from a $k$-rank
        count it follows that $\dimk \iatf{2}{} g < n+1$ but this contradicts the fact that $\dimk
        \iatf{2}{} g = h(0) + h(1) = n + 1$, see Proposition
        \ref{ref:hilbertfunctionfrompolynomial:prop}.
    \end{proof}

    \subsection{Application of flatness criterion from
    Theorem~\ref{ref:bjorkflatness:thm}}

    There exist polynomials $f\in S$ such that there is an easily described
    flat family with general member reducible and a special member
    isomorphic to the apolar algebra of $f$. We present them below.
    
    \def\Dx{X}
    \begin{prop}\label{ref:addingvartrick:prop}
        \def\parf{\partial\hook f}
        \def\dual#1{#1^{*}}
        Let $\DSS = \DSring$ and $T = S[\Dx]$. Let $f\in S$ and
        $\partial\in \dual{S}$ be such that $\partial^2\hook f = 0$. Take a
        natural number $m \geq 2$ and set
        $g := f + \Dx^{m}\cdot \parf$, then
        \[
        \annn{\dual{T}}{g} = \dual{T}\cdot \annn{\dual{S}}{f} + \dual{T}\cdot \alpha\cdot
        \annn{\dual{S}}{\parf} + \left( \alpha^m - m!\cdot\partial \right),
        \]
        where $\alpha\in \dual{T}$ is the dual to $\Dx$.
    \end{prop}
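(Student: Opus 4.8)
The plan is to prove the two inclusions separately: ``$\supseteq$'' by a direct verification that each listed generator annihilates $g$, and ``$\subseteq$'' by a normal-form reduction modulo the right-hand ideal, which I denote $J$. Throughout I would set $h := \partial\hook f$, so that $g = f + X^m h$, and record two facts used constantly. First, $\partial\hook h = \partial^2\hook f = 0$ by hypothesis. Second, since $S^*$ is commutative, every $\psi\in\annn{S^*}{f}$ satisfies $\psi\hook h = \psi\hook(\partial\hook f) = \partial\hook(\psi\hook f) = 0$, so $\annn{S^*}{f}\subseteq\annn{S^*}{h}$. I identify $T^* = k\formal{y_1,\dots,y_n,\alpha}$ and write a general element as $\Theta = \sum_{k\geq 0}\alpha^k\theta_k$ with $\theta_k\in S^*$. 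Because $\alpha$ differentiates only in $X$ while the $y_i$ differentiate only in the $x_i$, the crucial computation is $\alpha^k\hook X^m = \tfrac{m!}{(m-k)!}X^{m-k}$, giving the $X$-graded formula
\[
\Theta\hook g \;=\; \theta_0\hook f \;+\; \sum_{k=0}^{m}\frac{m!}{(m-k)!}\,X^{m-k}\,(\theta_k\hook h).
\]
Reading off the coefficient of each power of $X$ shows that $\Theta\in\annn{T^*}{g}$ is equivalent to: $\theta_k\hook h = 0$ for $0\le k\le m-1$, together with $\theta_0\hook f + m!\,(\theta_m\hook h) = 0$, and no constraint on $\theta_k$ for $k>m$.

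For ``$\supseteq$'' I would check the three generator families. Any $\psi\in\annn{S^*}{f}$ kills $g$ since $\psi\hook g = (\psi\hook f)+X^m(\psi\hook h)=0$, using $\annn{S^*}{f}\subseteq\annn{S^*}{h}$. Any $\alpha\psi$ with $\psi\in\annn{S^*}{h}$ kills $g$ because $\psi\hook g = \psi\hook f\in S$ is then annihilated by $\alpha$. Finally $\alpha^m - m!\partial$ kills $g$ because $\alpha^m\hook g = m!\,h$ while $\partial\hook g = h$ (here $\partial\hook h=0$ is exactly what is needed). As $\annn{T^*}{g}$ is an ideal, this yields $J\subseteq\annn{T^*}{g}$.

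The substance is ``$\subseteq$''. Given $\Theta\in\annn{T^*}{g}$, I would reduce it modulo $J$ to $\alpha$-degree $<m$. The relation $\alpha^m\equiv m!\partial\pmod J$ rewrites $\alpha^k\theta_k\equiv m!\,\partial\,\alpha^{k-m}\theta_k$ for $m\le k<2m$, lowering the $\alpha$-degree below $m$; and since $\alpha^{2m}-(m!)^2\partial^2 = (\alpha^m-m!\partial)(\alpha^m+m!\partial)\in J$, each term of $\alpha$-degree $\ge 2m$ becomes a multiple of $\partial^2$, which lies in $T^*\annn{S^*}{f}\subseteq J$ because $\partial^2\in\annn{S^*}{f}$. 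Hence $\Theta\equiv\sum_{r=0}^{m-1}\alpha^r\tilde\theta_r\pmod J$ with $\tilde\theta_r = \theta_r + m!\partial\theta_{r+m}\in S^*$. Since $J\subseteq\annn{T^*}{g}$, this reduced element still annihilates $g$; applying the system above with top coefficients vanishing forces $\tilde\theta_0\in\annn{S^*}{f}$ and $\tilde\theta_r\in\annn{S^*}{h}$ for $1\le r\le m-1$. Then $\tilde\theta_0\in J$ and each $\alpha^r\tilde\theta_r = \alpha^{r-1}(\alpha\tilde\theta_r)\in T^*\alpha\,\annn{S^*}{h}\subseteq J$, so $\Theta\in J$.

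The main obstacle I anticipate is making this reduction legitimate for genuine power series rather than polynomials, since $\Theta$ is an infinite sum. This is resolved by observing that the tail of $\alpha$-degree $\ge 2m$ factors as $(m!)^2\partial^2\cdot\bigl(\sum_{k\ge 2m}\alpha^{k-2m}\theta_k\bigr)$, a single element of $T^*$ times $\partial^2\in\annn{S^*}{f}$, hence lies in the ideal $T^*\annn{S^*}{f}\subseteq J$ with no convergence subtlety. The remaining work — tracking the coefficient of each $X^{m-k}$ and the combinatorial factors $\tfrac{m!}{(m-k)!}$ — is routine bookkeeping.
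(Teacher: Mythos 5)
Your proposal is correct and follows essentially the same route as the paper: verify that $\alpha^m - m!\cdot\partial$ and the other generators annihilate $g$, reduce a general element modulo the right-hand ideal to $\alpha$-degree below $m$, and read off the annihilation conditions from the coefficients of the powers of $\Dx$ in $\Theta\hook g$. Your explicit treatment of the infinite tail of $\alpha$-degree $\geq 2m$ via $\alpha^{2m}-(m!)^2\partial^2=(\alpha^m-m!\partial)(\alpha^m+m!\partial)$ and $\partial^2\in\annn{\DSS^*}{f}$ fills in a point the paper passes over with ``reducing modulo it''.
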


    \begin{proof}
        Clearly the element $\alpha^{m} - m!\cdot \partial$ annihilates $g$,
        and reducing modulo it, we may investigate only elements of the form
        $\sigma_0 + \sigma_1\cdot \alpha+ \dots +
        \sigma_{m-1}\alpha^{m-1}$, where $\sigma_i\in S^*$. The action of such
        element on $g$ is given by
        \begin{multline*}
        \left(\sigma_0 + \sigma_1\cdot \alpha+ \dots +
        \sigma_{m-1}\alpha^{m-1}\right)\hook g =\\\sigma_0\hook f +
        \Dx^{m}(\sigma_0\partial\hook f) + m\cdot\Dx^{m-1}(\sigma_1\partial\hook f) + \dots
        + m!\cdot\Dx(\sigma_{m-1}\partial\hook f),
        \end{multline*}
        thus the element belongs to $\annn{T^*}{g}$ if and only if $\sigma_0 \hook f = 0,
        \sigma_1\partial\hook f = 0,\dots,\sigma_{m-1}\partial\hook f =0$,
        and the claim follows.
    \end{proof}

    In the following theorem we use Theorem~\ref{ref:bjorkflatness:thm} in an
    essential way -- without any knowledge of the fibers, we prove that a
    certain family is flat. This theorem will act as the main ingredient of almost
    all proofs of smoothability, allowing one to reduce the question to the
    smoothability of algebras of lower $k$-rank.
    \begin{thm}\label{ref:bjorktactical:thm}
        Let $T^* = k\formal{\Dy_1,\dots,\Dy_n, \alpha}$ and $A = \DD{tmpT}{T^*}/I$ be a finite
        $k$-algebra defined by the ideal
        \[
        I = (\alpha^{o} - q) + J,
        \]
        where $o\geq 2, q\in k[\Dy_1,\dots,\Dy_n]$ and $J\ideal \DtmpT$ is generated by
        elements of the ideal $(\Dy_1,\dots,\Dy_n)\ideal k[\Dy_1,\dots,\Dy_n, \alpha]$ homogeneous with
        respect to the grading by $\alpha$.
        Suppose that for some natural $c$ such that $0< c < o$ we have
        \[\DD{tmpan}{\annn{\DtmpT/J}{\DD{yo}{\alpha^o}}} \subseteq
        \annn{\DtmpT/J}{\DD{yom}{\alpha^{c}}}\mbox{ and } \Dtmpan \subseteq
        \annn{\DtmpT/J}{q},\]
        then $\Spec A$ is a degeneration of reducible schemes. More precisely
        the morphism
        \[
        \varphi:\Spec \frac{\DD{Tpolyt}{k[\Dy_1,\dots,\Dy_n, \alpha, t]}}{\pp{\Dyo - t\cdot \Dyom -
        q} + (J)} \to \Spec \DD{line}{k[t]}
        \]
        is flat, with a general fiber reducible and the fiber over $t$
        isomorphic to $\Spec A$.
    \end{thm}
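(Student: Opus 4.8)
The plan is to deduce flatness from Theorem~\ref{ref:bjorkflatness:thm} by grading everything by powers of $\alpha$. Write $R = k[y_1,\dots,y_n,\alpha,t]$, graded so that $\alpha$ has degree $1$ and $y_1,\dots,y_n,t$ have degree $0$, and let $B = R/\pp{(\alpha^o - t\alpha^c - q) + (J)}$ be the source algebra of $\varphi$; the task is to show $B$ is flat over $k[t]$. Filter $k[t]$ trivially, so $\gr k[t] = k[t]$ (Example~\ref{ref:gradedtofiltered:example}), and filter $B$ by $\alpha$-degree. Since $t$ has $\alpha$-degree $0$, multiplication by $t$ preserves this filtration, making $B$ a filtered $k[t]$-module; by Theorem~\ref{ref:bjorkflatness:thm} it is then enough to prove that $\gr B$ is flat over $k[t]$.

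Because the generators of $J$ are homogeneous in $\alpha$, the ring $R' := R/(J) = \pp{k[y_1,\dots,y_n,\alpha]/J}[t]$ is graded by $\alpha$-degree, and $B = R'/(s)$ where $s = \alpha^o - t\alpha^c - q$. As $0 < c < o$ and $q$ has $\alpha$-degree $0$, the leading form of $s$ is $\alpha^o$ (which is nonzero in $R'$, since $\alpha^o \notin (y_1,\dots,y_n) \supseteq J$). \emph{The crux of the proof} is to identify the initial ideal of $(s)$: by Proposition~\ref{ref:initialideals:prop}, applied to $R'$ as a graded module over itself, one has $\gr (s) \simeq (\alpha^o)$ as soon as $\annn{R'}{\alpha^o} \subseteq \annn{R'}{s}$. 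Since $\alpha^o,\alpha^c$ and $q$ are independent of $t$, writing $g = \sum_k g_k t^k \in \annn{R'}{\alpha^o}$ forces every coefficient $g_k$ to annihilate $\alpha^o$; by the two hypotheses each $g_k$ then annihilates $\alpha^c$ and $q$ as well, so $g\alpha^c = gq = 0$ and hence $g\cdot s = g\alpha^o - t\,g\alpha^c - gq = 0$. This is exactly the containment required, so $\gr (s) \simeq (\alpha^o)$.

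Equipping $(s)$ and $B$ with the induced and quotient filtrations and applying Proposition~\ref{ref:exactoffiltered:prop} to $0 \to (s) \to R' \to B \to 0$ gives an exact sequence $0 \to \gr(s) \to \gr R' \to \gr B \to 0$; as $\gr R' = R'$ we obtain
\[
\gr B \simeq R'/(\alpha^o) \simeq C[t], \qquad C := k[y_1,\dots,y_n,\alpha]/\pp{(\alpha^o) + J}.
\]
Since $C[t]$ is free, hence flat, over $k[t]$, Theorem~\ref{ref:bjorkflatness:thm} yields flatness of $B$ over $k[t]$; thus $\varphi$ is flat. The fiber over $t=0$ is cut out by $(\alpha^o - q) + J = I$, and as $A$ is finite this polynomial quotient agrees with $T^*/I = A$, so the special fiber is $\Spec A$. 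Finally, the constant term of $q$ must vanish (otherwise $\alpha^o - q$ would be a unit in $T^*$ and $A = 0$); hence for $\lambda \neq 0$ the fiber over $t = \lambda$ contains in its support both the origin and the nonzero points of the $\alpha$-axis solving $\alpha^{o-c} = \lambda$, so it is supported at $\geq 2$ points and is therefore reducible.

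I expect the annihilator/initial-ideal step of the second paragraph to be the only substantive point: the whole construction is engineered so that the two hypotheses are precisely the conditions making the lower-order terms $-t\alpha^c$ and $-q$ invisible to the leading ideal, which is what lets Proposition~\ref{ref:initialideals:prop} compute $\gr B$ and reduces the theorem to the formal flatness criterion of Theorem~\ref{ref:bjorkflatness:thm}. Everything else is bookkeeping with the two filtered-to-graded propositions.
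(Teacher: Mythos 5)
Your proposal is correct and follows essentially the same route as the paper: filter by $\alpha$-degree, verify $\annn{R'}{\alpha^o}\subseteq\annn{R'}{\alpha^o-t\alpha^c-q}$ coefficientwise in $t$ using the two hypotheses, invoke Proposition~\ref{ref:initialideals:prop} to get $\gr(s)=(\alpha^o)$, then Proposition~\ref{ref:exactoffiltered:prop} and Theorem~\ref{ref:bjorkflatness:thm} to conclude flatness, with reducibility of the general fiber read off from its support at the origin and at the roots of $\alpha^{o-c}=\lambda$. Your write-up is in fact slightly more careful than the paper's (spelling out the coefficientwise annihilator argument and the identification of the special fiber), so there is nothing to correct.
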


    \begin{proof}
        Denote by $I_t = \pp{\Dyo - t\cdot \Dyom -
        q} + (J)\ideal \DTpolyt$.
        For an invertible $\lambda\in k$, the fiber of $\varphi$ over
        $(t-\lambda)$ is supported at the origin and at
        $\big(0, 0, \dots, 0, \sqrt[o-c]{\lambda}\big)$ thus it is a reducible scheme. It remains to
        prove that $\varphi$ is flat. We will use Theorem
        \ref{ref:bjorkflatness:thm}.
        \def\DTpoly{k[\Dy_1,\dots,\Dy_n,\alpha]}%
        Define a filtration on $\DTpolyt$ by
        gradation with respect to $\alpha$:
        \[
        \DTpolyt_m = \{f\in \DTpolyt\ |\ \deg_{\alpha} f \leq m\}.
        \]
        Since $J$ is homogeneous with respect to the gradation by $\alpha$ we can calculate $\gr I_t$ as the preimage of
        $\gr \pp{\Dyo - t\cdot \Dyom - q} \subseteq \gr \DTpolyt/(J)$. For
        clarity let us denote $R := \DTpolyt/(J)$.
        Now, since $(J)$ is generated by elements of $\DTpoly$, we have
        \[\annn{R}{\Dyo} = k[t]\cdot \pp{\annn{\DTpoly/J}{\Dyo}},\mbox{ so }
        \annn{R}{\Dyo} \subseteq \annn{R}{\Dyo-t\cdot\Dyom-q}
        \]
        and we use Proposition
        \ref{ref:initialideals:prop} to deduce that
        $\gr \pp{\Dyo - t\cdot \Dyom - q} \subseteq \gr R$ is
        generated by $\Dyo$. Now the claim follows from Proposition
        \ref{ref:exactoffiltered:prop} and Theorem
        \ref{ref:bjorkflatness:thm}.
    \end{proof}

    \begin{cor}\label{ref:bjoerktactcor:cor}
        \def\parf{\partial\hook f}
        \def\dual#1{#1^{*}}
        Let $\DSS = \DSring$ and $T = S[\Dx]$. Let $f\in S$ and
        $\partial\in \dual{S}$ be such that $\partial^2\hook f = 0$. Set
        $g = f + \Dx^{m}\cdot \parf$, where $m \geq 2$ is a natural number. The apolar algebra of $g$ is a
        fiber of a deformation over $\Spec\Dline$, whose general fiber is
        isomorphic to a disjoint sum of the apolar
        algebra of $f$ and $m-1$ copies of the apolar algebra of $\parf$.
    \end{cor}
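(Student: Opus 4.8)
The plan is to realise the apolar algebra of $g$ as the special fibre furnished by Theorem~\ref{ref:bjorktactical:thm} and then to split the general fibre into local factors. Throughout I write $S^* = k\formal{y_1,\dots,y_n}$ and $T^* = k\formal{y_1,\dots,y_n,\alpha}$ with $\alpha$ dual to $X$, and I abbreviate $\mathfrak{a} := \annn{S^*}{f}$ and $\mathfrak{b} := \annn{S^*}{\partial\hook f}$; I may assume $f\neq 0\neq\partial\hook f$, the remaining case being trivial. First I would apply Proposition~\ref{ref:addingvartrick:prop}, which gives
\[
\annn{T^*}{g} = (\alpha^m - m!\cdot\partial) + J,\qquad J := T^*\cdot\mathfrak{a} + T^*\cdot\alpha\cdot\mathfrak{b}.
\]
This is exactly the shape demanded by Theorem~\ref{ref:bjorktactical:thm}, with $o = m\geq 2$ and $q = m!\cdot\partial\in k[y_1,\dots,y_n]$ (choosing polynomial representatives, which is harmless since the quotients are finite). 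Commutativity of the $\hook$-action supplies the three relations I will use repeatedly: $\mathfrak{a}\subseteq\mathfrak{b}$ (if $\eta\hook f=0$ then $\eta\hook(\partial\hook f)=\partial\hook(\eta\hook f)=0$), $\partial\cdot\mathfrak{b}\subseteq\mathfrak{a}$ (for $\eta\in\mathfrak{b}$, $(\partial\eta)\hook f=\eta\hook(\partial\hook f)=0$), and $\partial\in\mathfrak{b}$ (a restatement of $\partial^2\hook f=0$). As $f,\partial\hook f\neq 0$, both $\mathfrak{a},\mathfrak{b}$ contain no unit and lie in $(y_1,\dots,y_n)$, so $J$ is generated by $\alpha$-homogeneous elements of $(y_1,\dots,y_n)$, as required.

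Next I would check the two annihilator hypotheses of Theorem~\ref{ref:bjorktactical:thm} with $c = 1$ (allowed since $1 < m$). Using $\mathfrak{a}\subseteq\mathfrak{b}$, the ring $R := T^*/J$ has, $\alpha$-degree by $\alpha$-degree,
\[
R \simeq S^*/\mathfrak{a}\ \oplus\ \bigoplus_{i\geq 1}\alpha^i\cdot\pp{S^*/\mathfrak{b}},
\]
on which multiplication by $\alpha$ is the projection $S^*/\mathfrak{a}\onto S^*/\mathfrak{b}$ out of degree $0$ and the identity of $S^*/\mathfrak{b}$ in every higher degree. Hence $\annn{R}{\alpha^k}=\mathfrak{b}/\mathfrak{a}$ (sitting in degree $0$) for all $k\geq 1$; in particular $\annn{R}{\alpha^m}=\annn{R}{\alpha}$, the first inclusion. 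For the second, $\partial\cdot\mathfrak{b}\subseteq\mathfrak{a}$ says precisely that $\partial$ annihilates $\mathfrak{b}/\mathfrak{a}$, so $\annn{R}{\alpha^m}=\mathfrak{b}/\mathfrak{a}\subseteq\annn{R}{\partial}$. Theorem~\ref{ref:bjorktactical:thm} then yields a flat family over $\Spec\Dline$ whose fibre over $t=0$ is the apolar algebra of $g$ and whose general fibre is reducible.

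Finally I would identify the general fibre, namely the fibre of the polynomial family over $t=\lambda\neq 0$,
\[
B_\lambda := \frac{k[y_1,\dots,y_n,\alpha]}{(\alpha^m - \lambda\alpha - m!\cdot\partial) + J}.
\]
Reducing modulo $(y_1,\dots,y_n)$ kills $\partial$ and leaves $k[\alpha]/\pp{\alpha(\alpha^{m-1}-\lambda)}$, which is separable; hence the support is the $m$ distinct points $\alpha\in\{0,\mu_1,\dots,\mu_{m-1}\}$, where $\mu_1,\dots,\mu_{m-1}$ are the $(m-1)$-st roots of $\lambda$, and $B_\lambda$ is the product of its local rings there. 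At a root $\mu_l\neq 0$ the coordinate $\alpha$ is invertible, so $\alpha\mathfrak{b}$ generates $\mathfrak{b}\supseteq\mathfrak{a}$ and $\partial\in\mathfrak{b}$ forces the defining equation to reduce to $\alpha=\mu_l$; the local factor is $S^*/\mathfrak{b}$, the apolar algebra of $\partial\hook f$. At $\alpha=0$ the factor $\alpha^{m-1}-\lambda$ is a unit, so in the completed local ring the relation solves uniquely for $\alpha=\theta\in\partial\cdot(S^*)^{\times}$; substituting and using $\partial\cdot\mathfrak{b}\subseteq\mathfrak{a}$ collapses the image of $\alpha\cdot\mathfrak{b}$ into $\mathfrak{a}$, leaving the local factor $S^*/\mathfrak{a}$, the apolar algebra of $f$. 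Thus $B_\lambda\simeq\pp{S^*/\mathfrak{a}}\times\pp{S^*/\mathfrak{b}}^{m-1}$, the asserted disjoint sum.

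The step I expect to be the genuine obstacle is this last one: extracting the two kinds of local factors of $B_\lambda$, where one must solve for $\alpha$ near the origin in the complete local ring and then orchestrate the relations $\mathfrak{a}\subseteq\mathfrak{b}$, $\partial\cdot\mathfrak{b}\subseteq\mathfrak{a}$ and $\partial\in\mathfrak{b}$ to see that the point $\alpha=0$ contributes the apolar algebra of $f$ while each of the remaining $m-1$ points contributes the apolar algebra of $\partial\hook f$.
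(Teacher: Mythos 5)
Your proof is correct and follows essentially the same route as the paper: Proposition~\ref{ref:addingvartrick:prop} to compute $\annn{T^*}{g}$, verification of the annihilator hypotheses of Theorem~\ref{ref:bjorktactical:thm} with $o=m$, $q=m!\cdot\partial$, $c=1$, and identification of the general fibre by localising at the $m$ points of its support. The only cosmetic difference is at the origin, where you solve for $\alpha$ by Hensel's lemma, while the paper multiplies the defining relation by $\alpha$, uses $\alpha\partial\in J$ to get $\alpha^{m+1}-\lambda\alpha^2$ into the ideal, and inverts $\alpha^{m-1}-\lambda$ to reach the same elimination of $\alpha$.
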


    \begin{proof}
        \def\dual#1{#1^{*}}
        Let, $S^* = \DTring$ and $T^* = k\formal{\Dy_1,\dots,\Dy_n, \alpha}$.
        Clearly $\partial$ may be taken to be a polynomial in
        $\Dy_1,\dots,\Dy_n$.
        From Proposition~\ref{ref:addingvartrick:prop} it follows that
        \[\annn{\dual{T}}{g} = \dual{T}\cdot \annn{\dual{S}}{f} + \dual{T}\cdot \alpha\cdot
        \annn{\dual{S}}{\partial \hook f} + \left( \alpha^m - m!\cdot\partial
        \right),\]
        Let $J = \dual{T}\cdot \annn{\dual{S}}{f} + \dual{T}\cdot \alpha\cdot
        \annn{\dual{S}}{\partial \hook f}$, then
        \[\DD{tmpan}{\annn{\DtmpT/J}{\DD{yo}{\alpha^m}}} =
        \annn{\DtmpT/J}{\DD{yom}{\alpha}}= \annn{\DtmpT/J}{\partial} =
        \pp{\annn{\dual{S}}{\partial\hook f}}/J.\]
        It follows that $m!\cdot\partial, m, 1, J$ satisfy the assumptions of Theorem~\ref{ref:bjorktactical:thm} for $q, o,
        c, J$ respectively. The required deformation $X\to
        \Spec \Dline$ comes from this theorem.
        We will now check that
        the fiber of $X$ over $(t-\lambda)$, where $0\neq \lambda\in k$, is
        isomorphic to a disjoint sum of the apolar algebra to $f$ and $m-1$
        copies of the apolar algebra of $\partial \hook f$.
        This fiber is isomorphic to
        \def\DTpoly{k[\Dy_1,\dots,\Dy_n,\alpha]}%
        \[
        \Spec \frac{\DTpoly}{\DD{tmpspec}{\pp{\alpha^m - \lambda\cdot \alpha -
        m!\cdot\partial}} +
        J},
        \]
        and has support equal to the union of $(0,\dots,0)$ and $(0,\dots,0,
        \omega)$, where $\omega$ runs through the $(m-1)$'st roots
        of $\lambda$. Note that since $\alpha\cdot \partial\in J$ the element
        $\alpha^{m+1} - \lambda\cdot \alpha^2 = \alpha\cdot \pp{\Dtmpspec} +
        m!\cdot\alpha\cdot
        \partial$ is in the ideal $\DD{Il}{I(\lambda)}$ defining the fiber.

        We will now look near $(0,\dots, 0, 0)$, i.e. localise the fiber at the
        ideal $\Dmm = (\Dy_1,\dots,\Dy_n, \alpha)$. In this localisation $\alpha^{m-1} -
        \lambda$
        is invertible, thus $\alpha^2 = \pp{\alpha^{m+1} - \lambda\cdot
        \alpha^2}\cdot \pp{\alpha^{m-1} - \lambda}^{-1}$ belongs to the
        localised ideal $\DD{Ill}{\DIl_{\Dmm}}$. Consequently $\lambda\alpha -
        m!\cdot\partial$ belongs to this
        ideal and $\DIll = \pp{\pp{\lambda\alpha - m!\cdot\partial} +
        J}_{\Dmm}$. This proves
        that
        \[
        \pp{\frac{\DTpoly}{\DIl}}_{\Dmm}  \simeq
        \pp{\frac{\DTpoly}{\pp{\lambda\alpha - m!\cdot\partial} + J}}_{\Dmm}  \simeq
        \frac{\DTring}{\annn{\dual{S}}{f}}.
        \]
        Next we look near $\Dmm = (0,\dots, \omega)$. Here $\alpha^2$ is invertible,
        thus $\alpha^{m-1} - \lambda$ belongs to $\DIll$, then
        $\DIll = \left( \alpha^{m-1} - \lambda, \annn{\dual{S}}{\partial\hook
        f} \right)$ and the localised algebra is isomorphic to the apolar
        algebra of $\partial\hook f$.
    \end{proof}

    \subsection{Smoothability of algebras with Hilbert function $(1, 5, 4, 1)$.}

    For simplicity and brevity of presentation we will use the fact that algebras of length at most
    $10$ are smoothable, see \cite{CN10}. In fact it is known that algebras of
    length $11$ are smoothable, so the sceptic reader should replace $(1, 5,
    4, 1)$ with~e.g.~$(1, 7, 4, 1)$. The crucial part of the proof is
    Proposition~\ref{ref:squares:prop}, allowing us to compute the apolar
    ideal and use Corollary~\ref{ref:bjoerktactcor:cor}.

    The proof is straightforward once we use the following inductive lemma:
    \begin{lem}
        Let $A  \simeq \DTring/I$ be a Gorenstein algebra of socle
        degree $j\geq 3$ with Hilbert function
        decomposition containing the term $\Dhdvect{A,j-2} = (0, e, 0)$, where
        $e > 0$.
        
        The scheme
        $\Spec A$ is a flat degeneration of schemes isomorphic to $\DD{dp}{\Spec
        k} \sqcup \Spec
        B$, where $B$ is a local Gorenstein $k$-algebra of socle degree
        $j$,
        which has the same
        Hilbert function decomposition as $A$ except for the term $\Dhdvect{B, j-2} = (0, e-1,
        0)$.
    \end{lem}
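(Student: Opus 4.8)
The plan is to realize $A$ concretely as an apolar algebra, strip one square off the degree-two part of its standardised dual generator, and recognise the resulting one-parameter family as an instance of Corollary~\ref{ref:bjoerktactcor:cor}. First I would invoke Proposition~\ref{ref:squares:prop}: since $\Dhdvect{A,j-2} = (0,e,0)$ and $j\geq 3$, the algebra $A$ is the apolar algebra of a polynomial $g = g_j + \dots + g_2\in \DSS$ in standard form, whose quadratic part has the shape $g_2 = z_1^2 + \dots + z_e^2 + Q$, where the variables $z_1,\dots,z_e$ do not occur in $g_{\geq 3}$ and $Q$ is a quadric in the variables that do occur in $g_{\geq 3}$. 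I would then set $X := z_e$, let $S$ be the polynomial ring on all remaining variables, and put $f := g - X^2 \in S$. Because $j \geq 3 > 2$, the top form of $f$ is still $g_j$, so $\deg f = j$ and the apolar algebra $B$ of $f$ is a local finite Gorenstein $k$-algebra of socle degree $j$.

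Next I would manufacture the operator needed to apply Corollary~\ref{ref:bjoerktactcor:cor}. Since $f_j = g_j \neq 0$ involves only the variables appearing in $g_{\geq 3}$ (not $X$), and the apolar pairing between degree-$j$ forms is perfect, there is a homogeneous $\partial$ of order $j$, involving none of the dual variable of $X$, with $\partial\hook f_j = 1$; as $\partial$ has order $j$ it annihilates the lower-degree parts of $f$, whence $\partial\hook f = 1$ and $\partial^2\hook f = \partial\hook 1 = 0$. Thus $g = f + X^2\cdot(\partial\hook f)$, exactly the shape required by Corollary~\ref{ref:bjoerktactcor:cor} with $m = 2$. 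That corollary exhibits $\Spec A$ (the apolar algebra of $g$) as the special fiber of a flat deformation over $\Spec k[t]$ whose general fiber is the disjoint union of $\Spec B$ and $m-1 = 1$ copy of the apolar algebra of the constant $\partial\hook f = 1$, namely $\Spec k$. This is precisely the asserted flat degeneration of $\Spec k \sqcup \Spec B$ to $\Spec A$.

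It remains to identify the decomposition of $B$. Applying Lemma~\ref{ref:lowdegreesindecomposition:lem} with $f_1 = g$, $f_2 = f$ and $a = j-2$ (admissible since $\deg(g-f) = \deg X^2 = 2 = j-a$) gives $\Dhdvect{A,m} = \Dhdvect{B,m}$ for all $0\leq m\leq j-3$. The rows $\Dhdvect{\bullet,j-1}$ and $\Dhdvect{\bullet,j}$ vanish for every Gorenstein algebra, so $A$ and $B$ can differ only in row $j-2$, where both are of the form $(0,*,0)$ by the symmetry in Theorem~\ref{ref:iarrobinoHfdecomposition:thm}. A rank count then finishes the proof: flatness forces $\dimk A = 1 + \dimk B$, while $\dimk A = \sum_{s,t}\Dhd{A,s}{t}$ and the analogous formula for $B$, after cancelling the coinciding rows and the vanishing top rows, yield $\dimk A - \dimk B = \Dhd{A,j-2}{1} - \Dhd{B,j-2}{1} = e - \Dhd{B,j-2}{1}$. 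Hence $\Dhd{B,j-2}{1} = e-1$ and $\Dhdvect{B,j-2} = (0,e-1,0)$, as required.

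The main obstacle is not a single hard idea but the combined bookkeeping of the middle two steps: one must guarantee that removing the square $X^2$ perturbs exactly the $(j-2)$ row of the Iarrobino decomposition (this is what Lemma~\ref{ref:lowdegreesindecomposition:lem} together with the vanishing of the two top rows buys us), and that the leftover factor $\partial\hook f$ is honestly the constant $1$, so that the extra component split off is a reduced point $\Spec k$ rather than something fatter. The existence of $\partial$ with $\partial\hook f = 1$ is exactly where Gorenstein-ness enters, through perfectness of the top-degree apolar pairing applied to $f_j \neq 0$.
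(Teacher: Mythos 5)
Your proposal is correct and follows essentially the same route as the paper's proof: apply Proposition~\ref{ref:squares:prop} to split off one square $X^2$ from the standardised dual generator, choose $\partial$ with $\partial\hook f=1$, and invoke Corollary~\ref{ref:bjoerktactcor:cor} with $m=2$. Your justification of the final bookkeeping (via Lemma~\ref{ref:lowdegreesindecomposition:lem}, the vanishing of rows $j-1$ and $j$, and a length count forced by flatness) is in fact more explicit than the paper's, which merely points to Proposition~\ref{ref:hilbertfunctionfrompolynomial:prop} for the decomposition of $B$.
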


    \begin{proof}
        \def\Dgp{f}
        Let $g\in k[x_1,\dots,x_n]$ be the  dual generator from
        Proposition~\ref{ref:squares:prop}, then we can write $g = \Dgp +
        x_1^2$, where
        $\Dgp\in k[x_2,\dots,x_n]$. Clearly there exists $\partial\in \DTring$ such
        that $\partial\hook f = 1$, then $\partial^2\hook \Dgp = 0$ and we
        may apply Corollary~\ref{ref:bjoerktactcor:cor} to $g = \Dgp +
        x_1^2\cdot \partial\hook f$.
        It follows that the
        apolar algebra of $g$ is a degeneration of union of the apolar algebra of
        $\partial\hook \Dgp = 1$ -- which is isomorphic to $\Ddp$ -- and the apolar algebra $B$ of
        $\Dgp$.
        The assumptions on $B$ except for the Hilbert function decomposition
        are straightforward. For the Hilbert function decomposition, see
        Proposition~\ref{ref:hilbertfunctionfrompolynomial:prop}.
    \end{proof}

    \subsection{Smoothability of algebras with Hilbert function $(1, 4, 4, 3,
    1)$.}

    In this subsection we have two main points: proving irreducibility, which is
    easy once we apply secant varieties, and proving the existence of a
    smooth, smoothable point of the Hilbert scheme.

    \begin{prop}\label{ref:usingsecants:prop}
        The projective set of quartics in three variables, whose
        apolar algebras have the
        Hilbert function $(1, n, m, n, 1)$ with $m\leq 3$, is
        the third secant variety to the
        fourth Veronese embedding of $\mathbb{P}^2$. In particular it is
        irreducible.
    \end{prop}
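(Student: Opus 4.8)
The plan is to recognise the condition ``$m\le 3$'' as a determinantal condition on the \emph{middle catalecticant} of $f$ and then to identify the resulting locus with the third secant variety $\sigma_3 := \sigma_3\!\left(\nu_4(\mathbb{P}^2)\right)\subseteq \mathbb{P}(S^4)=\mathbb{P}^{14}$, where $S=k[x_1,x_2,x_3]$ and $T^*=k\formal{y_1,y_2,y_3}$ acts by $\hook$. Since every quartic $f$ is homogeneous of degree $4$, its apolar algebra $A=T^*/f^{\perp}$ has socle degree $4$ and, by Lemma~\ref{ref:gradedhavesymmetrichilbertfunction:lem} together with the symmetry in Theorem~\ref{ref:iarrobinoHfdecomposition:thm}, Hilbert function $(1,n,m,n,1)$. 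By Proposition~\ref{ref:hilbertfunctionfrompolynomial:prop} the value $m=h_A(2)$ is exactly the rank of the catalecticant map $\mathrm{Cat}_f\colon T^*_2\to S_2$, $\partial\mapsto \partial\hook f$, a symmetric $6\times 6$ matrix whose entries are linear in the coefficients of $f$. Hence the set in question is $V=\{[f]:\operatorname{rank}\mathrm{Cat}_f\le 3\}$, the vanishing locus of the $4\times4$ minors, a closed subvariety of $\mathbb{P}^{14}$.

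First I would prove $\sigma_3\subseteq V$. For a single power $\ell^4$ the image of $\mathrm{Cat}_{\ell^4}$ is the line $k\cdot\ell^2\subseteq S_2$, so $\operatorname{rank}\mathrm{Cat}_{\ell^4}=1$; since $f\mapsto \mathrm{Cat}_f$ is linear, subadditivity of rank gives $\operatorname{rank}\mathrm{Cat}_{\ell_1^4+\ell_2^4+\ell_3^4}\le 3$. Thus every honest sum of three fourth powers lies in $V$, and as $V$ is closed and $\sigma_3$ is by definition the closure of the set of such sums, $\sigma_3\subseteq V$. The same parametrisation shows $\sigma_3$ is irreducible, being the closure of the image of $(\mathbb{P}^2)^{3}\dashrightarrow \mathbb{P}^{14}$, $([\ell_1],[\ell_2],[\ell_3])\mapsto[\ell_1^4+\ell_2^4+\ell_3^4]$.

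The substantial direction is $V\subseteq\sigma_3$, and this is where I expect the main difficulty. Fix $f$ with $m\le3$; I would use the Apolarity Lemma (a consequence of Theorem~\ref{ref:Macaulaycorrcor:thm}: $I_Z\subseteq f^{\perp}$ iff $f\in\langle\nu_4(Z)\rangle$) to \emph{construct a length-$\le 3$ apolar subscheme} $Z\subseteq\mathbb{P}^2$ with $I_Z\subseteq f^{\perp}$. Concretely one analyses the quadrics $(f^{\perp})_2=\ker\mathrm{Cat}_f$, of dimension $6-m\ge3$, and shows that, using the Gorenstein structure of $f^{\perp}$, the saturation of the ideal generated by these quadrics (and by any linear forms in $f^{\perp}$) defines a zero-dimensional scheme of degree at most $3$; equivalently, that the cactus rank of $f$ is bounded by the catalecticant rank $m\le3$. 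Granting such $Z$, I would finish by smoothability: every subscheme of $\mathbb{P}^2$ of length $\le 3$ is smoothable (for instance because $\Hilb^{3}(\mathbb{P}^2)$ is smooth and irreducible), so $Z$ is a flat limit of triples of distinct points $Z_t$. Since a length-$\le3$ scheme imposes independent conditions on quartics, the spans $\langle\nu_4(Z_t)\rangle$ form a flat family, whence $f\in\langle\nu_4(Z)\rangle$ is a limit of genuine sums $\ell_1^4+\ell_2^4+\ell_3^4$ (using Proposition~\ref{ref:smoothabilityeverywhere:prop} to transport smoothability into the relevant ambient space). This gives $f\in\sigma_3$, hence $V=\sigma_3$, and irreducibility by the previous paragraph.

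A cleaner but less self-contained route to $V\subseteq\sigma_3$ avoids the explicit construction: the rank-$\le3$ locus of symmetric $6\times6$ matrices is irreducible of codimension $\binom{4}{2}=6$, so $\dim V\le 8$, while $\dim\sigma_3=8$ (the case of three points is non-defective for $\nu_4(\mathbb{P}^2)$ by Alexander--Hirschowitz). As $\sigma_3\subseteq V$ is an irreducible $8$-dimensional subvariety, it then suffices to know that $V$ carries no spurious component, i.e. the classical identification of this catalecticant variety with $\sigma_3$. Either way, I expect the honest obstacle to be precisely this point: bounding the length of the apolar scheme cut out by $\ker\mathrm{Cat}_f$, equivalently ruling out extra components of the determinantal locus.
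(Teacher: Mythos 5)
The paper does not actually prove this proposition: its ``proof'' is a one-line citation to Landsberg--Ottaviani \cite{LO}, which in turn points to classical sources for the identification of $\sigma_3\bigl(\nu_4(\mathbb{P}^2)\bigr)$ with the rank~$\le 3$ locus of the middle catalecticant. Your setup is the right one and matches what that literature does: the symmetry $(1,n,m,n,1)$ follows from Lemma~\ref{ref:gradedhavesymmetrichilbertfunction:lem}, the identification $m=\operatorname{rank}\mathrm{Cat}_f$ from Proposition~\ref{ref:hilbertfunctionfrompolynomial:prop}, and your proof of the inclusion $\sigma_3\subseteq V$ together with the irreducibility of $\sigma_3$ is complete and correct.

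However, the inclusion $V\subseteq\sigma_3$ --- the entire nontrivial content of the statement --- is not proved in your proposal. You reduce it to the claim that $\operatorname{rank}\mathrm{Cat}_f\le 3$ forces the existence of an apolar subscheme $Z\subseteq\mathbb{P}^2$ of length $\le 3$ (i.e.\ that cactus rank is bounded above by catalecticant rank here), but this is asserted, not established: in general the catalecticant rank only bounds the cactus rank from \emph{below}, and showing that the quadrics in $\ker\mathrm{Cat}_f$ cut out a scheme of length $\le 3$ for \emph{every} $f\in V$ (not just a general one) is precisely the classical theorem being invoked. Your fallback dimension count does not repair this: a determinantal locus inside a linear family of symmetric matrices has codimension \emph{at most} the generic value $\binom{4}{2}=6$, so what you get is that every component of $V$ has dimension $\ge 8$, not $\dim V\le 8$; this cannot rule out a spurious component of dimension $\ge 8$ other than $\sigma_3$. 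You flag this point honestly as ``the honest obstacle,'' and indeed it is --- so as a self-contained argument the proof is incomplete at exactly that step, and to match the paper you would simply cite \cite{LO} (or a classical reference such as Clebsch/Schreyer-type results on catalecticant varieties) for it.
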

    \begin{proof}
        See~e.g.~\cite{LO}, which refers to other sources.
    \end{proof}

    \begin{prop}\label{ref:h14431irred:prop}
        The Gorenstein algebras with Hilbert function $(1, 4, 4, 3, 1)$ are
        parametrised, up to isomorphism, by an irreducible
        Zariski-constructible subset $V
        \subseteq \DSS^{\leq 3} = \DSring^{\leq 3}$, where we view $\DD{Affthree}{\DSS^{\leq 3}}$ as an affine
        space.
    \end{prop}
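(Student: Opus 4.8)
The plan is to realise every such algebra through a dual socle generator in standard form and then to exhibit all admissible generators as the image of a single irreducible parameter space, with the secant variety $\sigma_4(v_4(\mathbb{P}^2))$ controlling the top-degree part. Since $h_A(1)=4$ we may take $n=4$, and as the socle degree is $j=4\ge 2$, Theorem~\ref{ref:Macaulaycorrcor:thm} together with Theorem~\ref{ref:standardform:thm} present each Gorenstein algebra $A$ with $h_A=(1,4,4,3,1)$ as the apolar algebra $\DTT/f^{\perp}$ of some $f=f_4+f_3+f_2$ in standard form, where $f_i\in S^i\cap k[x_1,\dots,x_{e(4-i)}]$. By Corollary~\ref{ref:topdegreefirstrow:cor} the first row $\Delta_0$ of the Iarrobino decomposition is exactly the Hilbert function of the leading form $f_4$, and the standard-form constraint $e(0)=\Delta_0(1)$ forces $f_4\in k[x_1,x_2,x_3]$.

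Next I would determine the admissible symmetric decompositions. Feeding the symmetry of each $\Delta_s$ and the additivity $h_A(t)=\sum_i\Delta_i(t)$ of Theorem~\ref{ref:iarrobinoHfdecomposition:thm} into the vector $(1,4,4,3,1)$ leaves only two possibilities, $\Delta_0=(1,3,3,3,1)$ (with $\Delta_1=(0,1,1,0)$) and $\Delta_0=(1,3,4,3,1)$ (with $\Delta_2=(0,1,0)$). In both cases $\Delta_0(2)\le 4$, i.e. the middle catalecticant of the ternary quartic $f_4$ has rank at most $4$, so $f_4$ lies in the fourth secant variety $\sigma_4(v_4(\mathbb{P}^2))$; the rank-$3$ stratum is precisely $\sigma_3(v_4(\mathbb{P}^2))$ by Proposition~\ref{ref:usingsecants:prop}, and it sits in the closure of the rank-$4$ stratum inside the irreducible variety $\sigma_4(v_4(\mathbb{P}^2))$.

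Then I would assemble the parameter space. Parametrise $f_4$ by sums of four fourth powers of linear forms in $x_1,x_2,x_3$ (an irreducible affine space dominating $\sigma_4$, whose degenerate locus covers $\sigma_3$), and let the lower-order terms $f_3,f_2$ range over the affine space of their coefficients; this produces an irreducible variety $P$ carrying the tautological generator $f=f_4+f_3+f_2$. By Proposition~\ref{ref:hilbertfunctionfrompolynomial:prop} the condition $h_{\DTT/f^{\perp}}=(1,4,4,3,1)$ is expressed through ranks of the maps $\partial\mapsto\partial\hook f$, so by semicontinuity (Proposition~\ref{ref:maximalopen:prop}) the locus $P_0\subseteq P$ where it holds is open, hence irreducible, once one checks that the generic member of $P$ already has this Hilbert function. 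Proposition~\ref{ref:familiesofpolystohilb:prop} then turns $P_0$ into a family of apolar algebras, and the induced classifying map realises the parametrising set $V$ of the statement as the image of $P_0$; being the image of the irreducible $P_0$ under a morphism, $V$ is irreducible and constructible by Chevalley's theorem.

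The main obstacle is the irreducibility of $P_0$, equivalently the verification that the two symmetric-decomposition strata fuse into one family: one must show that the generic point of $P$ indeed realises $h=(1,4,4,3,1)$ (so that $P_0$ is dense in $P$) and that the special $\Delta_0=(1,3,3,3,1)$ algebras occur as flat limits of the generic $\Delta_0=(1,3,4,3,1)$ ones rather than forming a separate component. The delicate interaction is between the degeneration of $f_4$ from catalecticant rank $4$ to rank $3$ inside $\sigma_4$ and the simultaneous adjustment of the admissible lower terms $(f_3,f_2)$; the secant-variety input of Proposition~\ref{ref:usingsecants:prop} and the explicit rank description of Proposition~\ref{ref:hilbertfunctionfrompolynomial:prop} are exactly what keep the fibre dimensions under control and make the argument, as the text promises, ``easy once we apply secant varieties.''
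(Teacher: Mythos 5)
Your enumeration of the admissible symmetric decompositions is the right move, and it is in fact more careful than the paper's own proof, which opens by asserting that $\Delta_0=(1,3,3,3,1)$, $\Delta_1=(0,1,1,0)$ is the \emph{only} possibility. As you note, $\Delta_0=(1,3,4,3,1)$, $\Delta_1=(0,0,0,0)$, $\Delta_2=(0,1,0)$ also satisfies every constraint of Theorem~\ref{ref:iarrobinoHfdecomposition:thm}, and this stratum is nonempty: for $\ell=x_1+x_2+x_3$ the polynomial $f=x_1^4+x_2^4+x_3^4+\ell^4+x_4^2$ has apolar algebra with Hilbert function $(1,4,4,3,1)$, while by Corollary~\ref{ref:topdegreefirstrow:cor} its $\Delta_0$ equals the Hilbert function $(1,3,4,3,1)$ of the leading form. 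Since the decomposition is an isomorphism invariant, such algebras admit no dual socle generator in the paper's family $Z\times S^{\leq 3}$, so the paper's argument is itself incomplete at exactly the point you isolate.

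Your proposal, however, does not close the gap, and the step that concretely fails is the appeal to Proposition~\ref{ref:maximalopen:prop}. To reach the $\Delta_0=(1,3,3,3,1)$ algebras your space $P$ must let $f_3$ involve the fourth variable (by Lemma~\ref{ref:lowdegreesindecomposition:lem}, if $f_4+f_3\in k[x_1,x_2,x_3]$ then $\Delta_1(1)=0$); but then the generic member of $P$, having $f_4$ of middle catalecticant rank $4$ and a generic $f_3$, has Hilbert function $(1,4,5,3,1)$, i.e.\ rank $14$. So $13$ is not the maximal rank on $P$, the locus $P_0$ is merely locally closed rather than open, Proposition~\ref{ref:maximalopen:prop} yields openness of the wrong (rank-$14$) locus, and the verification you defer would in fact fail. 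The substance of the proposition --- that the rank-$13$ locus is irreducible, equivalently that the $\Delta_0=(1,3,3,3,1)$ stratum lies in the closure of the $\Delta_0=(1,3,4,3,1)$ stratum inside it --- is precisely what you label the ``main obstacle,'' and neither your argument nor the paper's resolves it; a correct proof has to exhibit an explicit degeneration between the two strata or treat them separately.
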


    \begin{proof}
        The only possible decomposition of the Hilbert function is
        $\Dhdvect{0} = (1, 3, 3,
        3, 1); \Dhdvect{1} = (0, 1, 1, 0)$. Let us fix a dual socle generator in the
        standard form $f = f_4 + f_3 + f_2$. Corollary~\ref{ref:topdegreefirstrow:cor}
        implies that the apolar algebra of $f_4$ has Hilbert function
        $\Dhdvect{0}$, thus $\Dhdvect{0}$ depends only on $f_4$. By Proposition~\ref{ref:usingsecants:prop}
        forms $f_4$ whose apolar algebras have Hilbert function $(1, 3, 3, 3,
        1)$ constitute a Zariski locally closed irreducible subset $Z \subseteq\DSS^4$.

        Once we know that $h(1) = 4$ and that the first row of the
        decomposition is $(1, 3, 3, 3, 1)$, then $h = (1, 4, 4, 3, 1)$ is
        maximal among possible Hilbert functions. Proposition
        \ref{ref:maximalopen:prop} applied to $Z \times \DSS^{\leq 3}$ proves
        that the set of dual socle generators of algebras of the form $(1, 4,
        4, 3, 1)$ is open in $Z \times \DSS^{\leq 3}$, thus irreducible.
    \end{proof}

    \begin{prop}\label{ref:h14431smoothel:prop}
        Let $n = 4$.
        The subset $V \subseteq \DAffthree$ defined in
        Proposition~\ref{ref:h14431irred:prop} induces a morphism
        $V\to \DHilbGPn$, whose image is irreducible.
        The image of the point $p$ corresponding to the apolar algebra of
        \[\DD{tmppoly}{x_1^4 + x_2^4 + x_3^4 + x_4^2\cdot (x_1 + x_2)}\]
        is a smooth point of $\DHilbGPn$ lying
        in $\DHilbGPnzero$.
    \end{prop}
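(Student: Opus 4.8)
The plan is to establish the four assertions in turn: existence of the morphism, irreducibility of its image, membership of $\varphi(p)$ in the smoothable component, and smoothness at $\varphi(p)$. Every closed point of $V$ is a dual socle generator of a Gorenstein algebra with Hilbert function $(1,4,4,3,1)$, hence of $k$-rank $r = 1+4+4+3+1 = 13$; since this rank is constant on $V$, Proposition~\ref{ref:familiesofpolystohilb:prop} supplies a morphism $\varphi\colon V\to \DHilbGPn$ (with $n=4$, $r=13$) sending $f$ to the Hilbert point of its apolar algebra supported at $p$. As $V$ is irreducible by Proposition~\ref{ref:h14431irred:prop} and the image of an irreducible set under a morphism is irreducible, $\varphi(V)$ is irreducible. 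Recall that $\DHilbGPnzero$ has dimension $r\cdot \dim \mathbb{P}^4 = 52$ by Definition~\ref{ref:defsmoothable:def}, and that it is open in $\Hilb_{\mathbb{P}^4}^{13}$, so tangent spaces and smoothness for $\DHilbGPn$ and $\Hilb_{\mathbb{P}^4}^{13}$ coincide at $\varphi(p)$.

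First I would check that $f = x_1^4 + x_2^4 + x_3^4 + x_4^2(x_1 + x_2)$ really lies in $V$: listing the derivatives $\partial \hook f$ by degree and applying Proposition~\ref{ref:hilbertfunctionfrompolynomial:prop} gives $h_A = (1,4,4,3,1)$, so $\varphi(p)$ is defined. For the smoothable component I would write $f = h + x_4^2\cdot(\partial\hook h)$ with $h := x_1^4 + x_2^4 + x_3^4 \in k[x_1,x_2,x_3]$ and $\partial := \tfrac{1}{24}(y_1^3 + y_2^3)$, so that $\partial \hook h = x_1 + x_2$ and $\partial^2 \hook h = 0$. This is exactly the hypothesis of Corollary~\ref{ref:bjoerktactcor:cor} (with extra variable $x_4$ and $m = 2$), presenting the apolar algebra of $f$ as a fibre of a deformation over $\Spec k[t]$ whose general fibre is the disjoint union of the apolar algebra of $h$ (Hilbert function $(1,3,3,3,1)$, rank $11$) and one copy of the apolar algebra of $x_1 + x_2$ (rank $2$). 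Both pieces are smoothable---the first by smoothability of algebras of length $\le 11$ (\cite{CN10} and the following remark), the second trivially---so their disjoint union is smoothable by Remark~\ref{ref:punctualimportant:remark}. Since the smoothable component is closed and contains the general fibre, it contains the special fibre, whence $\varphi(p)\in \DHilbGPnzero$.

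The main obstacle is the tangent space computation. By Proposition~\ref{ref:Gorensteintangentspace:prop} its rank equals $\dimk f^\perp/(f^\perp)^2 = \dimk \mathbb{T}^*/(f^\perp)^2 - 13$, where $\mathbb{T}^* = k\formal{y_1,\dots,y_4}$ and $f^\perp$ is the apolar ideal. I would first determine $f^\perp$ explicitly---its degree-$2$ part being spanned by $y_1 y_2,\ y_1 y_3,\ y_2 y_3,\ y_3 y_4,\ (y_1 - y_2)y_4$, supplemented by the degree-$3$ relations and by $\mathfrak{m}^5$---and then compute $(f^\perp)^2$, the goal being to verify $\dimk \mathbb{T}^*/(f^\perp)^2 = 65$, i.e.\ tangent rank $52$. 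This is the delicate step: the particular $f$ is engineered so that, despite $V$ being positive-dimensional, the apolar algebra is rigid enough for the tangent space to attain the minimal value $52$; organising the monomial bookkeeping (or checking it on a computer algebra system) is where the real work lies.

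Combining the last two steps finishes the argument. The point $\varphi(p)$ lies on the smoothable component of dimension $52$, so the local dimension of $\Hilb_{\mathbb{P}^4}^{13}$ there is at least $52$; since local dimension never exceeds the tangent rank, which equals $52$, all three numbers coincide. Hence $\varphi(p)$ is a smooth point, lying on the unique irreducible component through it, which is therefore the smoothable component, and $\varphi(p)\in\DHilbGPnzero$ as required.
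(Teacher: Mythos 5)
Your proof is correct and follows essentially the same route as the paper: Proposition~\ref{ref:familiesofpolystohilb:prop} for the morphism and the irreducibility of its image, Corollary~\ref{ref:bjoerktactcor:cor} applied to $f = h + x_4^2\cdot(\partial\hook h)$ with $h = x_1^4+x_2^4+x_3^4$ to degenerate to a smoothable disjoint union and hence place the point in the smoothable component, and a tangent-space count of $4\cdot 13=52$ via Proposition~\ref{ref:Gorensteintangentspace:prop} to conclude smoothness. Like the paper, you leave the two explicit verifications (the Hilbert function of $f$ and the rank of $I/I^2$ for $I=f^{\perp}$) as direct computations.
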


    \begin{proof}
        The first claim follows from Proposition
        \ref{ref:familiesofpolystohilb:prop} and the fact that the image of an
        irreducible set is irreducible.
        The Hilbert function computation is most conveniently conducted using
        Proposition~\ref{ref:hilbertfunctionfrompolynomial:prop}, we leave it
        to the reader.

        Next, one should check that $\Dtmppoly$ satisfies assumptions of
        Corollary~\ref{ref:bjoerktactcor:cor} with respect to the variable
        $\Dx:=x_4$. The resulting flat family presents
        the apolar algebra of $\Dtmppoly$ as a flat limit of schemes
        isomorphic to $W$, where $W$ is a disjoint union of a double point and
        an apolar algebra of $x_1^4 + x_2^4 + x_3^4$. Then one may continue by
        applying Corollary~\ref{ref:bjoerktactcor:cor} trice or using the fact
        that every finite Gorenstein algebra $A$ with $h_A(1) \leq 3$ is smoothable,
        see \cite{CN10} and references therein.
        To calculate that $p\in \DHilbGPnzero$ is smooth it sufficies to prove
        that the tangent space at $p$ has $k$-rank $4\cdot 13 = 52$. The most
        straightforward way to do this is to calculate the apolar ideal of
        $\Dtmppoly$ together with its square and use Proposition
        \ref{ref:Gorensteintangentspace:prop}. Since this is pure computation
        (but see Remark \ref{ref:tangentbjorkoj:remark} below), we leave it to the reader.
    \end{proof}

    \begin{remark}\label{ref:tangentbjorkoj:remark}
        If $k[t] \to \DTT[t]/I_t$ is the flat family from Theorem
        \ref{ref:bjorktactical:thm}, then one may analyze flatness of
        $k[t] \to \DTT[t]/I_t^2$ using the same theorem, obtaining a criterion
        for flatness in terms of $I_t$. Unfortunately in general its check seems as difficult as making a
        direct computation.
    \end{remark}

    \subsection{Smoothability of graded algebras with Hilbert function $(1, 3,
    3, 3, 3, 1)$}

    As mentioned in the proof of Proposition~\ref{ref:h14431smoothel:prop},
    every finite Gorenstein algebra $A$ satisfying $h_A(1) \leq 3$ is
    smoothable; the proof depends on a structure theorem on the resolutions of
    such $A$. As an example of a more general method, we present a different approach to this smoothability,
    which shows a tight connection between finding equations of secant varieties and
    proving smoothability of certain Gorenstein algebras.

    Since we are analyzing graded algebras, we may take homogeneous forms of
    degree five as the dual socle generators. The following proposition shows that the set of
    such forms is irreducible, in fact describing a general element.

    \begin{prop}\label{ref:usingsecantstwo:prop}
        The projective set of quartics in three variables, whose apolar
        algebras have the
        Hilbert function $(1, n, m, m, n, 1)$ with $m\leq 3$, is
        the third secant variety to the
        fifth Veronese embedding of $\mathbb{P}^2$. In particular it is
        irreducible.
    \end{prop}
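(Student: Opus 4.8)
The plan is to reinterpret the Hilbert-function condition as a rank condition on a catalecticant, prove the elementary inclusion into the secant variety by apolarity, and invoke the catalecticant description of the secant variety for the reverse inclusion. Write $S = k[x_1,x_2,x_3]$ and $R = k[y_1,y_2,y_3]$, the latter acting on the former by differentiation as in Theorem~\ref{ref:Macaulaycorrcor:thm}. A dual socle generator realizing Hilbert function $(1,n,m,m,n,1)$ is a form $f\in S$ of degree $5$, and by Lemma~\ref{ref:gradedhavesymmetrichilbertfunction:lem} the Hilbert function of its apolar algebra $A = R/f^{\perp}$ equals $\Dhdvect{0}$ and is automatically symmetric, hence of the stated shape with $n = h_A(1)\leq 3$ in every case. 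The middle value is the rank of the catalecticant map
\[
\operatorname{Cat}: R_2 \longrightarrow S_3,\qquad \partial\longmapsto \partial\hook f,
\]
because $\ker\operatorname{Cat} = (f^{\perp})_2$ and so $m = h_A(2) = \operatorname{rank}\operatorname{Cat}$. As $\operatorname{Cat}$ is a $10\times 6$ matrix whose entries are linear in the coefficients of $f$, the locus $m\leq 3$ is the vanishing of its $4\times 4$ minors; thus the set $V$ of the proposition is Zariski closed in $\mathbb{P}(S_5)=\mathbb{P}^{20}$.

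For the inclusion $\sigma_3(\nu_5(\mathbb{P}^2))\subseteq V$ I would use apolarity directly. If $f = \ell_1^5 + \ell_2^5 + \ell_3^5$ for pairwise distinct points $[\ell_i]\in\mathbb{P}^2$, then the homogeneous ideal $I_Z$ of the length-three scheme $Z=\{[\ell_1],[\ell_2],[\ell_3]\}$ satisfies $I_Z\subseteq f^{\perp}$, so $A$ is a graded quotient of the coordinate ring $R/I_Z$ of three points. Since $\dimk (R/I_Z)_t\leq 3$ for all $t\geq 1$, we get $m = h_A(2)\leq 3$. These honest sums of three fifth powers are dense in $\sigma_3$ and $V$ is closed, so the whole secant variety lies in $V$.

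The reverse inclusion $V\subseteq\sigma_3$ is the crux and the step I expect to be the main obstacle. Given $[f]\in V$, that is $\operatorname{rank}\operatorname{Cat}=m\leq 3$, one must reconstruct a length-three apolar subscheme $Z\subseteq\mathbb{P}^2$ with $I_Z\subseteq f^{\perp}$, equivalently exhibit $f$ as a (limit of) sum of three fifth powers. The strategy is to examine the linear system $(f^{\perp})_2$ of conics, of dimension $6-m\geq 3$, and to show that these conics are apolar to a zero-dimensional scheme of degree three whose saturated ideal sits inside $f^{\perp}$; the apolarity lemma then places $f$ in the span of the corresponding Veronese points. This amounts to proving that the rank-at-most-three catalecticant locus coincides with $\sigma_3(\nu_5(\mathbb{P}^2))$, which is non-defective of the expected dimension $8$; this is exactly the catalecticant result of Landsberg--Ottaviani \cite{LO} invoked for the companion Proposition~\ref{ref:usingsecants:prop}, and I would cite it in the same way. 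Irreducibility is then immediate, since a secant variety of an irreducible projective variety is irreducible.
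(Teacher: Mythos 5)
Your argument is correct and in substance identical to the paper's, which proves this proposition simply by citing \cite[Thm 3.2.1]{LO}: the one genuinely hard step --- that the rank-$\le 3$ locus of the middle catalecticant coincides with $\sigma_3(\nu_5(\mathbb{P}^2))$ --- is delegated to that same reference in both proofs, and your added bookkeeping (translating the Hilbert-function condition into the catalecticant rank, the closedness of that locus, and the easy inclusion of the secant variety via apolarity) is sound. Note only that you have silently, and correctly, read ``quartics'' in the statement as quintics, consistent with the six-entry Hilbert function and the fifth Veronese embedding.
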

    \begin{proof}
        See \cite[Thm 3.2.1]{LO}.
    \end{proof}

    A general form of this set is equal, up to a linear change of
    coordinates, to the form $x_1^5 + x_2^5 + x_3^5\in k[x_1, x_2, x_3]$. The
    apolar algebra of such form is smoothable by Corollary \ref{ref:bjoerktactcor:cor}.

    \bibliographystyle{alpha}

    \section*{Acknowledgements}

    The author is very grateful to Jaros\l{}aw Buczy\'nski for introducing him
    to the
    research-level algebraic geometry, many hours spent on teaching and
    explaining, his throughout assistance and sense of humour. Thanks to you
    the master degree was a happy research time!

    The topic of this work was primarily inspired by the work of
    Gianfranco Casnati and Roberto Notari, who also introduced the author to many
    folklore facts during the scientific visits in Torino. Thank you for all!

%    \bibliography{refs}

\begin{thebibliography}{CEVV09}

\bibitem[BB10]{bubu2010}
Weronika Buczy{\'n}ska and Jaros{\l}aw Buczy{\'n}ski.
\newblock {S}ecant varieties to high degree {V}eronese reembeddings,
  catalecticant matrices and smoothable {G}orenstein schemes.
\newblock {\em arXiv preprint arXiv:1012.3563}, 2010.

\bibitem[BCR12]{bertone2012division}
Cristina Bertone, Francesca Cioffi, and Margherita Roggero.
\newblock {A} division algorithm in an affine framework for flat families
  covering {H}ilbert schemes.
\newblock {\em arXiv preprint arXiv:1211.7264}, 2012.

\bibitem[BGL10]{bgl2010}
Jaros{\l}aw Buczy{\'n}ski, Adam Ginensky, and Joseph~M Landsberg.
\newblock {D}eterminantal equations for secant varieties and the
  {E}isenbud-{K}oh-{S}tillman conjecture.
\newblock {\em arXiv preprint arXiv:1007.0192}, 2010.

\bibitem[Bj{\"o}79]{bjoerk}
Jan-Erik Bj{\"o}rk.
\newblock {\em Rings of differential operators}, volume~21 of {\em
  North-Holland Mathematical Library}.
\newblock North-Holland Publishing Co., Amsterdam, 1979.

\bibitem[BMR12]{BMR}
Alessandra {Bernardi}, Pedro {Macias Marques}, and Kristian {Ranestad}.
\newblock Computing the cactus rank of a general form.
\newblock {\em ArXiv e-print arXiv:1211.7306}, 2012.

\bibitem[CEVV09]{CEVV}
Dustin~A. Cartwright, Daniel Erman, Mauricio Velasco, and Bianca Viray.
\newblock Hilbert schemes of 8 points.
\newblock {\em Algebra Number Theory}, 3(7):763--795, 2009.

\bibitem[CN09]{CN09}
Gianfranco Casnati and Roberto Notari.
\newblock On the {G}orenstein locus of some punctual {H}ilbert schemes.
\newblock {\em J. Pure Appl. Algebra}, 213(11):2055--2074, 2009.

\bibitem[CN11]{CN10}
Gianfranco Casnati and Roberto Notari.
\newblock On the irreducibility and the singularities of the {G}orenstein locus
  of the punctual {H}ilbert scheme of degree 10.
\newblock {\em J. Pure Appl. Algebra}, 215(6):1243--1254, 2011.

\bibitem[Eis95]{EisView}
David Eisenbud.
\newblock {\em Commutative algebra}, volume 150 of {\em Graduate Texts in
  Mathematics}.
\newblock Springer-Verlag, New York, 1995.
\newblock With a view toward algebraic geometry.

\bibitem[EV10]{ermanvelasco}
Daniel Erman and Mauricio Velasco.
\newblock A syzygetic approach to the smoothability of zero-dimensional
  schemes.
\newblock {\em Adv. Math.}, 224(3):1143--1166, 2010.

\bibitem[Gro95]{Gro}
Alexander Grothendieck.
\newblock Techniques de construction et th\'eor\`emes d'existence en
  g\'eom\'etrie alg\'ebrique. {IV}. {L}es sch\'emas de {H}ilbert.
\newblock In {\em S\'eminaire {B}ourbaki, {V}ol.\ 6}, pages Exp.\ No.\ 221,
  249--276. Soc. Math. France, Paris, 1995.

\bibitem[GW10]{WedhornAG}
Ulrich G{\"o}rtz and Torsten Wedhorn.
\newblock {\em Algebraic geometry {I}}.
\newblock Advanced Lectures in Mathematics. Vieweg + Teubner, Wiesbaden, 2010.
\newblock Schemes with examples and exercises.

\bibitem[Har66]{har66connectedness}
Robin Hartshorne.
\newblock Connectedness of the {H}ilbert scheme.
\newblock {\em Publications Math{\'e}matiques de l'IH{\'E}S}, 29(1):7--48,
  1966.

\bibitem[Har77]{HarAG}
Robin Hartshorne.
\newblock {\em Algebraic geometry}.
\newblock Springer-Verlag, New York, 1977.
\newblock Graduate Texts in Mathematics, No. 52.

\bibitem[Har10]{HarDeform}
Robin Hartshorne.
\newblock {\em Deformation theory}, volume 257 of {\em Graduate Texts in
  Mathematics}.
\newblock Springer, New York, 2010.

\bibitem[Iar94]{ia94}
Anthony~A. Iarrobino.
\newblock Associated graded algebra of a {G}orenstein {A}rtin algebra.
\newblock {\em Mem. Amer. Math. Soc.}, 107(514):viii+115, 1994.

\bibitem[Jel12]{JJ1551}
Joachim Jelisiejew.
\newblock {L}ocal finite dimensional {G}orenstein k-algebras having {H}ilbert
  function (1,5,5,1) are smoothable.
\newblock {\em ArXiv e-print arXiv:1212.0522}, 2012.

\bibitem[Kle80]{Klei}
Steven~L. Kleiman.
\newblock Relative duality for quasicoherent sheaves.
\newblock {\em Compositio Math.}, 41(1):39--60, 1980.

\bibitem[Lam91]{LamFC}
Tsit~Yuen Lam.
\newblock {\em A First Course in Noncommutative Rings}, volume 131 of {\em
  Graduate Texts in Mathematics}.
\newblock Springer, New York, 1991.

\bibitem[LO11]{LO}
Joseph~M. Landsberg and Giorgio Ottaviani.
\newblock {E}quations for secant varieties of {V}eronese and other varieties.
\newblock {\em ArXiv e-print arXiv:1111.4567}, 2011.

\bibitem[Str96]{Stromme}
Stein~Arild Str{\o}mme.
\newblock Elementary introduction to representable functors and {H}ilbert
  schemes.
\newblock In {\em Parameter spaces ({W}arsaw, 1994)}, volume~36 of {\em Banach
  Center Publ.}, pages 179--198. Polish Acad. Sci., Warsaw, 1996.

\bibitem[Wei94]{WeibHA}
Charles~A. Weibel.
\newblock {\em An introduction to homological algebra}, volume~38 of {\em
  Cambridge Studies in Advanced Mathematics}.
\newblock Cambridge University Press, Cambridge, 1994.

\end{thebibliography}

    \end{document}